\documentclass[a4paper, 12pt, one column, aas_macros]{amsart}
\usepackage[english]{babel}
\usepackage[english]{babel}
\usepackage{graphicx}
\usepackage[latin1]{inputenc}
\usepackage{amsmath,amssymb,hyperref,srcltx}
\usepackage{amsfonts}%
\usepackage[dvips]{geometry}
\usepackage[normalem]{ulem}

\newcommand{\diff}[2]{\mbox{{\rm Diff}{${\,}_{#1}({\mathbb C}^{#2},0)$}}}
\newcommand{\fdiff}[2]{\mbox{$\widehat{\rm Diff}{{\,}_{#1}({\mathbb C}^{#2},0)}$}}

\newcommand{\px}{\frac{\partial}{\partial x}} 
\newcommand{\pz}{\frac{\partial}{\partial z}} 

\newtheorem{pro}{Proposition}

\newtheorem{cor}{Corollary}
\newtheorem{rem}{Remark}
\usepackage{xwatermark}
\usepackage{xcolor}

\newtheorem{lemma}{Lemma}[section]
\newtheorem{propo}{Proposition}

\newtheorem*{theorem*}{Theorem}
\newtheorem{theorem}{Theorem}
\newtheorem*{main theorem*}{Fixed Point Curve Theorem}
\newtheorem*{P-D*}{Poincar\'{e}-Dulac normal form}
\newtheorem*{hsmtb*}{Holomorphic Stable Manifold Theorem for Diffeomorphisms}
\newtheorem*{sierpinski*}{Sierpi\'nski Theorem}
\newtheorem*{hsmtv*}{Holomorphic Stable Manifold Theorem for Vector Fields}
\newtheorem{example}{Example}

\theoremstyle{remark}
\newtheorem{defi}{Definition}

\newcommand{\C}{\mathbb{C}}

\newcommand{\dis}{\displaystyle}

\newcommand{\R}{\mathbb{R}}
\newcommand{\Z}{\mathbb{Z}}
\newcommand{\N}{\mathbb{N}}
\newcommand{\Q}{\mathbb{Q}}
\newcommand{\Sing}{\mathrm{Sing}}

\newcommand{\vf}[2]{ {\mathfrak X}{\,}_{#1}({\mathbb C}^{#2},0)}
\newcommand{\fvf}[2]{ {\hat{\mathfrak X}}{\,}_{#1}({\mathbb C}^{#2},0)}

\newcommand{\mfx}{{\mathfrak X}}
\newcommand{\re}{\mbox{ Re}}
\newcommand{\im}{\mbox{ Im }}
\newcommand{\difn}{\mbox{Diff}(\C^n,0)}

\newcommand{\ordem}{\mathrm{ord}}

\newcommand{\spec}{\mathrm{Spec}}
\newcommand{\id}{\mathrm{id}}
\title[A Fixed point curve 
theorem for finite orbits diffeomorphisms]{A Fixed point curve theorem for finite orbits local diffeomorphisms}
\author{Lucivanio Lisboa}
\email{lucivaniolisboa@gmail.com}

\author{Javier Rib\'{o}n}
\address{Instituto de Matem\'{a}tica e Estat\'\i stica \\
Universidade Federal Fluminense\\
Campus do Gragoat\'a\\
Rua Marcos Valdemar de Freitas Reis s/n, 24210\,-\,201 Niter\'{o}i, Rio de Janeiro - Brasil }

\email{jribon@id.uff.br}
 
\thanks{MSC-class. Primary:Primary: 32H50, 37C25; Secondary: 37F75, 34M25} 
\thanks{Keywords: local diffeomorphism, local dynamics,  finite orbits, invariant varieties}
\begin{document}

\begin{abstract}

We study local biholomorphisms with finite orbits in some neighborhood of the origin since they are intimately related to holomorphic foliations with closed leaves. We describe the structure of the set of periodic points in dimension 2. As a consequence we show that given a local biholomorphism $F$, in dimension 2 with finite orbits, there exists an analytic curve passing through the origin and contained in the fixed point set of some non-trivial iterate of $F.$ As an application we obtain that at least one eigenvalue of the linear part of $F$ at the origin is a root of unity. Moreover, we show that such a result is sharp by exhibiting examples of local biholomorphisms, with finite orbits, such that exactly one of the eigenvalues   is a root of unity. These examples are subtle since we show they can not be embedded in one parameter groups.
\end{abstract}
\maketitle
\tableofcontents	

\section{Introduction}\label{sec:introduction}
Let $F: U \to V$ be a biholomorphism where $U$ and $V$ are open sets of ${\mathbb C}^{n}$ 
that 
contain the origin $0$ and $F(0) = 0$. Fixed $p\in U,$ we define $F^0(p)=p$ and, if $F^0(p),...,\ F^{j-1}(p)\in U$ for $ j>0,$ we define $F^{j}(p)=F(F^{j-1}(p)).$ Given $A\subset U\cap V$ and $p\in A,$ we define the \emph{positive orbit of $p$ by $F$ in $A$} as
\[O_{F, A}^{+}(p) = \{ F^{j}(p);\ F^{k}(p)\in A,\ 0\leq k\leq j\}, \]
the \emph{negative orbit of $p$ by $F$ in $A$} as
\[  O_{F, A}^{-}(p) =  O_{F^{-1}, A}^{+}(p)\]
and the \emph{orbit of $p$ by $F$ in $A$} as 
\[ O_{F, A}(p)= O_{F, A}^{+}(p) \cup  O_{F, A}^{-}(p),\]
where $F^{-1}$ denote the inverse of $F.$ We say that $F$ has the \textit{finite orbits property} in $A$ if $O_{F,A}(p)$ is a finite set for all $p\in A.$ In this case, 
$F$ has the finite orbits property in $B$ for any subset $B$ of $A$
since $O_{F,B}(p)\subset O_{F,A}(p)$  for all $p \in B$.
As a consequence, the finite orbits property can be defined
for $F \in \difn$ where $\difn$ is the group of germs of biholomorphism fixing the 
origin  $0\in \C^n$. 

\begin{defi}\label{defi.gn.fin.orb.}
Let $F\in \difn$ be a local biholomorphism. We say that $F$ has the \textit{finite orbits property} 
(and then we write $F\in\diff{<\infty}{n}$) if there exists a representative $F:U\to V$ and a neighborhood $ A\subset U\cap V$ of $0$ 
such that $F$ has the finite orbits property in $A.$
\end{defi}

Finite orbits local biholomorphisms appear in the study of foliations with closed leaves.
In \cite{mattei-moussu1980} Mattei and Moussu proved one of the most important theorems in the theory of holomorphic foliations, namely the topological characterization of the existence of a non-constant holomorphic first integral for germs of codimension 1 holomorphic foliations.
More precisely, they show that a singular holomorphic foliation $\mathcal F$ on $({\mathbb C}^{n},0)$ of codimension 1 has a first integral if and only if  the leaves of $\mathcal F$ are closed subsets of the complement of
the singular set and only finitely many of them accumulate at 0. 
A fundamental ingredient of the proof is that, in dimension 1, the finite orbits property is equivalent to periodicity. More precisely, they show that a biholomorphism $F\in \diff{}{}$ has the finite orbits property if and only if $F$ is a finite order element of the group $\diff{}{}.$ 
For dimension $n\geq 2,$ the equivalence does not hold. For example, the local biholomorphism $F(x,y) = (x, y+ x^{2})$ has the finite orbits property but is non-periodic.  

It is possible to recover  the equivalence periodicity $\leftrightarrow$ finite orbits by replacing the finite order property with stronger conditions and so to obtain, in dimension greater than $1,$ analogues of the topological criterium of Mattei--Moussu (\cite{rebelo-reis2015a}, \cite{camara-scardua2009}, and \cite{camara-scardua2017}). 
In spite of this, the following elementary problems, related to the finite orbits property, 
were open until now:

\begin{enumerate}
	\item Relation of finite orbits with the jacobian matrix $D_0F$;
	 \item description of the set of periodic points of $F \in\diff{<\infty}{n}$.
\end{enumerate}
We answer these questions in dimension two and provide partial answers for higher dimension. 
A natural question (that was open until now) is whether the finite orbits property for 
$F \in \difn$  implies the analogue for the linear part $D_0 F$ of $F$ at the origin. 
The next result provides the first counterexamples. 
\begin{theorem} \label{thm_irrat_bih_with_fin_orb}
	Suppose that $\lambda\in \C$ satisfies the Cremer condition and $n \geq 1$.
	Then there exists  a global biholomorphism $F\in\mathrm{Diff}(\C^{n+1})$ such that 
	$\spec(D_0 F)=\{\lambda,1\}$, the algebraic multiplicity of the eigenvalue $1$ 
	of $D_0 F$ is equal to $1$ and $F$
	has the finite orbits property in every set of the form ${\mathbb C^{n}} \times U,$ where
	$U\subset \C$ is a bounded open set. 
\end{theorem}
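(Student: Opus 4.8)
The plan is to realize $F$ as a \emph{transcendental shear over a linear map}: writing $\C^{n+1}=\C^n_x\times\C_z$, take
\[
F(x_1,\dots,x_n,z)=\bigl(\lambda x_1,\dots,\lambda x_n,\; z+p(x_1,\dots,x_n)\bigr)
\]
with $p$ an entire function on $\C^n$ with $p(0)=0$ and no linear part, to be chosen. Any such $F$ is a global automorphism of $\C^{n+1}$ (its inverse is $(x,z)\mapsto(\lambda^{-1}x,\,z-p(\lambda^{-1}x))$), fixes the origin, and has $D_0F=\mathrm{diag}(\lambda,\dots,\lambda,1)$, so $\spec(D_0F)=\{\lambda,1\}$ with the eigenvalue $1$ of algebraic multiplicity $1$. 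Since $F^{j}(x,z)=(\lambda^{j}x,\; z+S_j(x))$ with $S_j(x):=\sum_{k=0}^{j-1}p(\lambda^{k}x)$, and $F$ fixes every point of $\C^n\times\{0\}$, the whole statement reduces to choosing $p$ so that for every $x\in\C^n\setminus\{0\}$ \emph{both} sequences $\{S_j(x)\}_{j\ge0}$ and $\{S_{-j}(x)\}_{j\ge0}$ are unbounded: then an orbit meeting $\C^n\times\{0\}$ is a single fixed point, while an orbit through $(x,z)$ with $x\ne0$ has last coordinate $z+S_j(x)$ leaving the bounded set $U$ in both time directions, hence meets $\C^n\times U$ in only finitely many points, for every bounded open $U$.

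To build $p$ I would use the Cremer condition to manufacture huge small divisors. It produces integers $2\le m_1<m_2<\cdots$ with $\eta(m):=|\lambda^{m}-1|$ satisfying $\eta(m_k)^{1/m_k}=\delta_k\downarrow0$ (and $\lambda^m\ne1$ for all $m\ge1$, since $\lambda$ is not a root of unity). Split $\{m_k\}$ into $n$ pairwise disjoint infinite subsets $M_1,\dots,M_n$ and set
\[
p(x_1,\dots,x_n)=\sum_{i=1}^{n}\ \sum_{m\in M_i}\eta(m)^{1/2}\,x_i^{\,m}.
\]
Then $\bigl(\eta(m)^{1/2}\bigr)^{1/m}=\delta^{1/2}\to0$, so $p$ is entire, and $p(0)=0$ with no linear term. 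Fix $x\ne0$ and an index $i$ with $x_i\ne0$. As $\lambda$ is an irrational rotation, the orbit closure $\overline{\{\lambda^{k}x:k\in\Z\}}$ is the circle $C_x=\{wx:|w|=1\}\cong\mathbb{T}$, on which the $x$–component of $F$ acts as the minimal rotation $R\colon w\mapsto\lambda w$, and $S_j(x)$ is exactly the $j$–th Birkhoff sum, based at $w=1$, of the continuous function $\varphi_x(w):=p(wx)$.

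Now suppose $\{S_j(x)\}_j$ is bounded. By the Gottschalk--Hedlund theorem (applied to the real and imaginary parts) there is a continuous $g\colon C_x\to\C$ with $\varphi_x=g-g\circ R$. Comparing Fourier coefficients forces $\widehat g(m)=\widehat{\varphi_x}(m)/(1-\lambda^{m})$ for every $m$; since the monomials of $p$ are pure powers spread over disjoint exponent sets, for $m\in M_i$ one gets $\widehat{\varphi_x}(m)=\eta(m)^{1/2}x_i^{\,m}$, whence
\[
|\widehat g(m)|=\frac{\eta(m)^{1/2}|x_i|^{m}}{|1-\lambda^{m}|}=\eta(m)^{-1/2}|x_i|^{m}=\bigl(\eta(m)^{-1/(2m)}|x_i|\bigr)^{m}\longrightarrow\infty
\]
as $m\to\infty$ in $M_i$ (because $\eta(m)^{1/m}\to0$), contradicting the Riemann--Lebesgue lemma applied to $g$. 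Hence $\{S_j(x)\}_j$ is unbounded. Applying the same argument to $F^{-1}$, which has the same shape with $(\lambda,p(x))$ replaced by $(\lambda^{-1},-p(\lambda^{-1}x))$ and with $\lambda^{-1}$ satisfying the Cremer condition along the same exponents $m_k$, gives unboundedness of $\{S_{-j}(x)\}_j$; this is exactly what the reduction above needs.

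The main obstacle is precisely this last point. The ``divided Fourier coefficients'' $\widehat{\varphi_x}(m)/(1-\lambda^{m})$ are far from summable --- they must be, since summability would make $\varphi_x$ an \emph{analytic} coboundary and then every orbit of $F$ would stay bounded --- so the contradiction cannot be extracted by naive term-by-term estimates on $S_j(x)$; it is Gottschalk--Hedlund, whose proof rests on compactness rather than absolute convergence, that makes the argument go through. A secondary issue requiring care is guaranteeing that $\varphi_x$ is a genuinely non-constant function of $w$ for \emph{every} $x\ne0$ simultaneously: this is why $p$ distributes its monomials over one disjoint exponent set per coordinate, rather than using, say, the power sums $\sum_i x_i^{m}$, for which a vector such as $(1,-1,0,\dots,0)$ could make $\varphi_x$ constant and hence its orbit bounded.
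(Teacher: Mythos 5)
Your proposal is correct and takes a genuinely different, arguably slicker, route than the paper's. Both proofs use the same architecture: realize $F$ as a shear $(x,z)\mapsto(\lambda x, z+p(x))$ over the rotation, observe that this is a global automorphism of $\C^{n+1}$ with the required spectrum, exploit the Cremer sequence of exponents $m$ with $|\lambda^m-1|^{1/m}\to 0$ both to make $p$ entire and to produce large small divisors, and reduce the finite-orbits property to unboundedness of the Birkhoff sums $S_j(x)$ and $S_{-j}(x)$ for every $x\neq 0$ (the symmetry $|\lambda^{-m}-1|=|\lambda^m-1|$ handling $F^{-1}$). The mechanism for unboundedness is where the two proofs diverge. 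The paper chooses coefficients $(2j)^{m_j}|\lambda^{m_j}-1|$ subject to five interlocking inequalities (C1)--(C5), and then gives a fully quantitative escape argument (Lemma \ref{lem:orb_fin}): for each region $\{\max_k|x_k|\le j,\ |x_{[j]}|\ge 1/j\}$ it produces an explicit iterate $k_j$ at which the $j$-th term $|\lambda^{k_jm_j}-1|(2jx_{[j]})^{m_j}$ dominates the rest of the series; the work is in making $k_j$ simultaneously large enough to amplify term $j$ and small enough (via $r_j$ and (C5)) to control the tail. You instead take coefficients $\eta(m)^{1/2}$ with the $n$ exponent sets disjoint, and replace all the bookkeeping by one soft step: if $(S_j(x))_j$ were bounded, Gottschalk--Hedlund over the minimal rotation on the orbit circle $C_x$ would give a \emph{continuous} $g$ solving $\varphi_x=g-g\circ R$, forcing $\widehat g(m)=\widehat{\varphi_x}(m)/(1-\lambda^m)=\eta(m)^{-1/2}x_i^m\to\infty$ along $M_i$, contradicting Riemann--Lebesgue. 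Your remark about why the soft route is needed is sound: the divided Fourier coefficients of any admissible $p$ must fail to be summable (else $F$ would be analytically linearizable, contradicting Theorem \ref{theorem.notable}), so a naive majorant argument cannot close the contradiction, whereas Gottschalk--Hedlund only needs compactness and continuity. The trade-offs are clear: the paper's proof is elementary, self-contained, and yields explicit escape times, while yours is shorter and conceptually transparent but non-constructive and relies on a classical theorem of topological dynamics. Both are valid proofs of the statement.
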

 Let us stress that  $F$ is a counterexample because, 
in the linear case, the finite orbits property occurs if and only if the spectrum of $D_0 F$ consists of 
roots of unity (cf. Proposition  \ref{proposition.car.fin.orb.lin.case}). 
Until now it was  known  that 
finite orbits local biholomorphisms $F$ satisfy that 
 the eigenvalues of $D_0 F$ have modulus $1$ by the Stable Manifold Theorem (cf. Corollary \ref{cor_fin.orb_imp_mod1}).

The next result gives an indication of why the examples of $F\in\diff{<\infty}{n}$ 
such that $\mathrm{spec} (D_0 F)$ is not
contained in the group of roots of unity were missing in the literature: 
there are no ``continuous" examples, 
i.e. where $F$ belongs to a one-parameter group. Let $\mfx(\C^n,0)$ denote the 
Lie algebra of 
singular local holomorphic vector fields at the origin.

\begin{theorem} 
\label{thm_flow_fin_orb}
	Let $X\in\mfx(\C^n,0)$ and let $F$ be the time 1 map of $X.$ Suppose that $F$ has the finite orbits property. Then it satisfies
	\[\spec (D_0F)\subset e^{2\pi i \Q}.\]
\end{theorem}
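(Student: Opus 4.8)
The plan is to argue by contradiction: assuming that some eigenvalue of $D_0F$ is not a root of unity, we will exhibit points arbitrarily close to the origin whose $F$-orbit inside a fixed neighbourhood of $0$ is infinite, which contradicts the finite orbits property. Let $A$ be the linear part of $X$, so that $D_0F=e^{A}$ and $\spec(D_0F)=\{e^{\mu_1},\dots,e^{\mu_n}\}$, where $\mu_1,\dots,\mu_n$ are the eigenvalues of $A$. By Corollary \ref{cor_fin.orb_imp_mod1} every $e^{\mu_j}$ has modulus $1$, hence $\mu_j\in 2\pi i\,\R$, and $e^{\mu_j}$ is a root of unity exactly when $\mu_j\in 2\pi i\,\Q$. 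Assume the set $B=\{\,j:\mu_j\notin 2\pi i\,\Q\,\}$ is non-empty and pick $\ell\in B$ with $|\mu_\ell|$ maximal among $\{\,|\mu_j|:j\in B\,\}$; in particular $\mu_\ell\neq 0$.

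The point of this choice is that it kills the resonances that would obstruct the construction below. If $\mu_j=m\mu_\ell$ for some integer $m\geq 2$, then $e^{\mu_j}=(e^{\mu_\ell})^{m}$ is not a root of unity, so $j\in B$, while $|\mu_j|=m|\mu_\ell|>|\mu_\ell|$ contradicts the maximality; hence $\mu_j\neq m\mu_\ell$ for all $j$ and all integers $m\geq 2$. Writing $\mu_j=2\pi i\theta_j$, the same maximality yields $|m\theta_\ell-\theta_j|\geq m|\theta_\ell|-|\theta_j|\geq|\theta_\ell|>0$ for $j\in B$ and $m\geq 2$, while for $j\notin B$ the rational number $\theta_j$ differs from the irrational $m\theta_\ell$ and $|m\theta_\ell-\theta_j|\to\infty$, so the infimum over $m\geq 2$ is positive. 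Altogether $\inf\{\,|m\mu_\ell-\mu_j|:1\leq j\leq n,\ m\geq 2\,\}>0$, i.e.\ there are no small divisors.

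Next we would construct an analytic $X$-invariant curve $C$ through $0$ tangent to an eigenvector $v$ of $A$ associated with $\mu_\ell$. Choosing linear coordinates in which $v=\partial/\partial x_1$ and $A$ is in Jordan form, write $X=\sum_i(a_i(x)+f_i(x))\,\partial/\partial x_i$ with $a_i$ linear and $\ordem(f_i)\geq 2$, and seek $C$ as the image of $\gamma(t)=(t,g_2(t),\dots,g_n(t))$ with $\ordem(g_j)\geq 2$. The invariance relation $\gamma'(t)\wedge X(\gamma(t))=0$ becomes, for each $j\geq 2$, an identity whose coefficient of $t^m$ has the form $(m\mu_\ell-\mu_j)\,g_{j,m}=P_{j,m}$, where $P_{j,m}$ is a universal polynomial in the $g_{k,l}$ with $l<m$ and in the Taylor coefficients of $X$; by the non-resonance these equations determine the $g_{j,m}$ (uniquely in the diagonalizable case, and through an invertible, small-divisor-free linear system inside each Jordan block in general), and the absence of small divisors lets one close a majorant estimate proving the series $g_j$ converge. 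Thus $C$ is a smooth analytic curve invariant under the flow of $X$, hence under $F=\exp(X)$, and $F|_C=\exp(X|_C)$ is a germ of biholomorphism of $(\C,0)$ with multiplier $e^{\mu_\ell}$ at the origin. This convergence statement is the crux of the argument; the rest is soft.

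It then remains to transfer the finite orbits property to $C$. If $F$ has finite orbits in a neighbourhood $A$ of $0$, then $A\cap C$ is a neighbourhood of $0$ in $C$, and because the forward and backward $F$-iterates of a point of $C$ stay in $C$, the definitions give $O_{F|_C,\,A\cap C}(p)=O_{F,A}(p)$ for every $p\in A\cap C$; hence $F|_C$ has the finite orbits property in dimension $1$. By the theorem of Mattei--Moussu \cite{mattei-moussu1980} recalled in the introduction, $F|_C$ is then of finite order, so its multiplier $e^{\mu_\ell}$ is a root of unity, contradicting $\ell\in B$. Therefore $B=\emptyset$, that is, $\spec(D_0F)\subset e^{2\pi i\Q}$.
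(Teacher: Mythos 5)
Your argument takes a genuinely different route from the paper, and it is plausible in outline, but the step you yourself call the crux --- convergence of the invariant curve --- is a real gap, not a citation.

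\textbf{How the paper argues.} The paper never singles out one eigenvalue; instead it applies the holomorphic stable manifold theorem to $iX$ (and, by varying the threshold $\theta$, to produce a nested family of $X$-invariant submanifolds $V_1\subset\cdots\subset V_k$), which places the restricted vector field in the Poincar\'e domain. It then invokes Poincar\'e--Dulac normal form and runs a double induction (on $n$ and on the number of distinct eigenvalues), handling each $\lambda_{l+1}$ either because a resonance forces $\lambda_{l+1}$ to be a rational combination of $\lambda_1,\dots,\lambda_l$, or because absence of such a resonance produces a lower-dimensional invariant manifold to which the induction hypothesis applies. In this way the convergence questions are entirely absorbed by two black boxes: the stable manifold theorem and Poincar\'e--Dulac.

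\textbf{Where your proposal has a gap.} You reduce the theorem to the existence of a \emph{convergent} $X$-invariant curve tangent to the $\mu_\ell$-eigendirection, but you only sketch why it converges. Two points need attention. First, the lower bound you establish, $\inf_{m\ge 2,\,j}|m\mu_\ell-\mu_j|>0$, is not by itself enough to close the majorant: the coefficient recursion coming from $X_1(\gamma)\,g_j'(t)=X_j(\gamma)$ carries the factor $m$ from the derivative $g_j'$, so you actually need the denominators to grow linearly in $m$, i.e.\ $|m\mu_\ell-\mu_j|\ge c\,m$ for large $m$. This does hold in your setting (since each $\mu_j/\mu_\ell$ is a fixed complex number, $|m-\mu_j/\mu_\ell|/m\to 1$), but it is what makes the majorant close and it should be stated explicitly. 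Second, the resulting statement --- a convergent invariant curve tangent to a purely imaginary, non-resonant eigendirection in dimension $n$, including the Jordan-block cases where the equations for the $g_j$ inside a block are coupled --- is a Briot--Bouquet-type result whose $n$-dimensional, Jordan form is not a textbook one-liner. It is provable by majorants, but without carrying that out (or pointing to a reference that covers this exact situation), the proof is incomplete; everything else in your argument (choosing $\ell$ with $|\mu_\ell|$ maximal among $B$ to kill resonances, restricting $F$ to the curve, and invoking Mattei--Moussu in dimension one) is elementary, so the theorem's entire content has been pushed into the unproven convergence.

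\textbf{A smaller remark.} Your reduction to one dimension should read $O_{F|_C,\,A\cap C}(p)=O_{F,A}(p)\cap C$, and then use invariance of $C$ to conclude $O_{F,A}(p)\subset C$ for $p\in A\cap C$ with $A$ small; the displayed equality $O_{F|_C,\,A\cap C}(p)=O_{F,A}(p)$ is only correct after this observation.

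In short: a legitimately different strategy (pick a single maximal irrational eigenvalue and build one convergent separatrix) versus the paper's strategy (use $iX$ and the stable manifold theorem to reach the Poincar\'e domain, then Poincar\'e--Dulac and induction). Yours would be shorter if the convergence lemma were available off the shelf; since it is not, the paper's route is the one that actually closes.
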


The existence of the examples  provided by Theorem \ref{thm_irrat_bih_with_fin_orb} 
has  consequences in the problem of geometrical realization of formal invariant curves. Indeed, it was proved in \cite{lopes-raissy-ribon-sanz2019} that when the multiplier of the restriction $F_{|\Gamma}$ of $F\in \diff{}{2}$ 
to a formal invariant curve $\Gamma$  is not an element of $e^{2 \pi i({\mathbb R} \setminus {\mathbb Q})}$ and $F$ is non-periodic then either the curve $\Gamma$ is convergent or there are invariant analytic sets asymptotic to $\Gamma$ 
 and consisting of stable orbits, i.e. orbits of points $p$ such that 
 $\lim_{k \to \infty} F^{k} (p) =0$. 
Our examples show that in general there is no  systematic approach to the geometrical realization of $\Gamma$ as a stable set if the multiplier of $F_{|\Gamma}$ is irrationally neutral, i.e. if it belongs to $e^{2 \pi i ({\mathbb R} \setminus {\mathbb Q})}$. This completes somehow the realization program in \cite{lopes-raissy-ribon-sanz2019} and \cite{lopez2020}.
More precisely, the examples provided by Theorem \ref{thm_irrat_bih_with_fin_orb} for dimension $2$
have a formal curve $\Gamma$ invariant by $F,$ such that the multiplier $\lambda$ of $F|_\Gamma$ 
belongs to $e^{2 \pi i ({\mathbb R} \setminus {\mathbb Q})},$ but $F$  
has no stable sets, since it has finite orbits.

 Theorem \ref{thm_irrat_bih_with_fin_orb} suggests that the finite orbits property is related
 to small divisors.  Note that the multiplier $\lambda$  in Theorem  
\ref{thm_irrat_bih_with_fin_orb} is very well approached by roots of unity since it is a Cremer number. 
Such a circumstance is not accidental; indeed we show, by applying a Theorem of P\"{o}schel  \cite{poschel1986}, 
that there is no $F\in \diff{<\infty}{2}$ such that $\spec (D_0F)$ contains a Bruno number  (Proposition \ref{pro:pos}).
In particular, we show that if the multiplier of $F_{|\Gamma}$ is a Bruno number, 
for $F\in \diff{}{2}$  and a formal invariant curve 
$\Gamma$ of $F$, then $F$ does not satisfy the finite orbits property (Corollary \ref{cor:pos}).

 The natural follow up question to Theorem  \ref{thm_irrat_bih_with_fin_orb}
is to understand how far is $D_0 F$ of satisfying the finite orbits property if 
$F \in \diff{<\infty}{n}$. In particular, are there $F \in  \diff{<\infty}{n}$ such that 
$\spec (D_0F) \cap  e^{2\pi i \Q} = \emptyset$?
The answer is  negative for dimension 2. 
\begin{theorem}\label{thm_at_least_one_root}
	Let $F\in \diff{<\infty}{2}$. Then at least one eigenvalue of $D_0F$ is a root of unity
	(and all of them  belong to the unit circle).
\end{theorem}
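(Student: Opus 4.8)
The plan is to derive Theorem~\ref{thm_at_least_one_root} quickly from the Fixed Point Curve Theorem, that is, from the statement that for every $F\in\diff{<\infty}{2}$ there exist an integer $N\geq 1$ and a germ of (possibly singular) analytic curve $\Gamma$ through the origin with $\Gamma\subset\mathrm{Fix}(F^{N})$. Proving that statement --- a precise description of the set of periodic points of $F$ in dimension $2$ --- is the substantive part of the paper; in this sketch I take it as given. The parenthetical claim that both eigenvalues lie on the unit circle is already available: it is Corollary~\ref{cor_fin.orb_imp_mod1}, obtained from the Holomorphic Stable Manifold Theorem for Diffeomorphisms, since an eigenvalue of $D_0F$ of modulus $\neq 1$ would yield a non-trivial stable or unstable manifold of $0$ and hence a point with infinite orbit, contradicting the finite orbits property.

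Granting $\Gamma\subset\mathrm{Fix}(F^{N})$, choose a parametrization $\gamma\colon(\C,0)\to(\C^{2},0)$ of $\Gamma$ and write $\gamma(t)=v\,t^{m}+o(t^{m})$ with $v\in\C^{2}\setminus\{0\}$ and $m\geq 1$. Because $F(0)=0$, the chain rule gives $D_{0}(F^{N})=(D_{0}F)^{N}$, and the Taylor expansion at the origin reads $F^{N}(z)=(D_{0}F)^{N}z+O(\|z\|^{2})$. Substituting $z=\gamma(t)$ in $F^{N}\circ\gamma=\gamma$, using $\|\gamma(t)\|^{2}=O(t^{2m})$ with $2m>m$, dividing by $t^{m}$ and letting $t\to 0$ we obtain
\[ (D_{0}F)^{N}v=v , \]
so $1$ is an eigenvalue of $(D_{0}F)^{N}$. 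Writing $\mu_{1},\mu_{2}$ for the eigenvalues of $D_{0}F$, the eigenvalues of $(D_{0}F)^{N}$ are $\mu_{1}^{N},\mu_{2}^{N}$; hence $\mu_{i}^{N}=1$ for some $i\in\{1,2\}$, i.e. $\mu_{i}$ is a root of unity. Combined with Corollary~\ref{cor_fin.orb_imp_mod1}, this is precisely Theorem~\ref{thm_at_least_one_root}.

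The genuine obstacle is the Fixed Point Curve Theorem itself. I expect its proof to run by cases on the linear part of $F$: if some eigenvalue has modulus $\neq 1$ there is nothing to prove (such $F$ has no finite orbits); if both eigenvalues are roots of unity one replaces $F$ by a suitable iterate tangent to the identity and analyses its fixed point set directly; and the delicate case is when exactly one eigenvalue is a root of unity while the other is irrationally neutral --- possibly a Cremer number, so that no linearizing or otherwise normalizing coordinates exist, as the examples of Theorem~\ref{thm_irrat_bih_with_fin_orb} show. There one must exclude the possibility that every analytic set $\mathrm{Fix}(F^{k})$, $k\geq 1$, is $0$-dimensional, by exploiting the finiteness of all orbits to force the set of periodic points $\bigcup_{k\geq 1}\mathrm{Fix}(F^{k})$ to contain a curve through $0$; a Baire or noetherianity argument then places that curve inside a single $\mathrm{Fix}(F^{N})$. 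Carrying this through --- presumably via Poincar\'e--Dulac normal forms, a careful treatment of resonances, and control of the multipliers along invariant formal curves --- is where essentially all the effort goes, after which the computation of the previous paragraph finishes the proof.
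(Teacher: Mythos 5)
Your proof is correct and follows essentially the same route as the paper: both derive the theorem from the Fixed Point Curve Theorem together with Corollary~\ref{cor_fin.orb_imp_mod1}. The only cosmetic difference is how you extract $1\in\mathrm{spec}\bigl((D_0F)^N\bigr)$ from the fixed point curve $\Gamma$ --- you use the leading coefficient of a Puiseux parametrization of $\Gamma$ to produce an eigenvector, whereas the paper notes that if $1\notin\mathrm{spec}(D_0F^k)$ then $F^k-\mathrm{Id}$ would be a local biholomorphism at $0$, contradicting that it vanishes on the positive-dimensional set $\Gamma$; the two observations are interchangeable.
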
 
As a consequence, the examples of Theorem \ref{thm_irrat_bih_with_fin_orb} have the 
minimal number of roots of unity eigenvalues and hence 
Theorem \ref{thm_at_least_one_root} is sharp. 
 Moreover, we classify the finite orbits local biholomorphisms $F\in \diff{}{2}$ 
such that $\spec (D_0F)$ contains a non-root of unity eigenvalue: essentially they are the examples
provided by Theorem  \ref{thm_irrat_bih_with_fin_orb}  (Proposition \ref{pro:pos}).
Theorem \ref{thm_at_least_one_root} is a consequence of 
 the Fixed Point Curve Theorem that we discuss 
next. A classical result about vector fields in dimension $n=2$ is the Camacho-Sad theorem \cite{camacho-sad1982}, which states that every vector field $X\in \mfx(\C^2,0)$ admits a germ of invariant curve at the origin.
Existence of invariant objects for local biholomorphisms tangent to the identity $F\in \diff{1}{2}$ is also well known (see \cite{abate2001}, \cite{lopes-sanz2015}, \cite{brochero-cano-lopez2008}....). 
In \cite{abate2001} Abate generalizes to $\C^2$ the classical Leau-Fatou flower theorem proving that if $F\in \diff{1}{2}$ has an isolated fixed point at $0$ then $F$ has at least one parabolic curve, that is, a $F$-invariant holomorphic curve, with the origin in their boundary, and 
whose orbits tend to $0$;
in particular, it does not have the finite orbits property. 
Thus, if $F\in \diff{1}{2}\cap\diff{<\infty}{2}$ then $F$ has  a non isolated fixed point at the origin.
Later on, L\'{o}pez-Hernanz and Sanz \cite{lopes-sanz2015} showed that if $F$ 
has a formal invariant curve $\Gamma$ that is not contained in the fixed point set of $F$ then
$F$ or $F^{-1}$ has a parabolic curve asymptotic to $\Gamma$. In particular, if 
$F\in \diff{1}{2}\cap\diff{<\infty}{2}$ then every formal invariant curve is a fixed point curve.

In contrast to the previous approach,
the second author showed that existence of germs of analytic invariant curves does not hold for biholomorphisms $F \in \diff{}{2}$ in general \cite{ribon2005}. Moreover, the counterexamples can be chosen to be tangent to the identity or formally linearizable. 

In this work we prove the following theorem. It is a version of  the Camacho-Sad 
Theorem for local biholomorphisms that satisfy the finite orbits property.
\begin{main theorem*}
	Let $F\in \diff{}{2}$ be a diffeomorphism with the finite orbits property. Then, there exists $m\in \N$ such that $F^m$ has a germ $\Gamma$ of complex analytic curve consisting of fixed points. 
\end{main theorem*}

We can apply The Fixed Point Curve Theorem to obtain a generalization of a Rebelo--Reis Theorem in the context of cyclic subgroups of $\diff{}{2}.$ More precisely, a consequence of Theorem $A$ in \cite{rebelo-reis2015a} is that if, for all $m\in \N,$ every point $p \in \mathrm{Fix}(F^{m})$ in a neighborhood of $0$ satisfies that either $p$ is an isolated fixed point of $F^{m}$ or the germ of $F^{m}$ at $p$ is equal to the identity map, then $F$ is periodic. 
We provide a stronger version of this result in dimension 2, namely it suffices to 
check the condition at the origin. 
Thus, we obtain a  negative criterium for the finite orbits property. 
\begin{cor}\label{cor_orbfin_imp_alg.crit}
	Let $F \in \mathrm{Diff} ({\mathbb C}^{2},0)$ such that $0$ is an isolated fixed point of $F^{m}$ for every $m \in {\mathbb N}$. Then $F$ does not satisfy the finite orbits property.
\end{cor}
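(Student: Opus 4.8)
The plan is to prove the contrapositive and reduce everything to the Fixed Point Curve Theorem. Assume, aiming for a contradiction, that $F$ does satisfy the finite orbits property, that is $F\in\diff{<\infty}{2}$. Then the Fixed Point Curve Theorem applies to $F$ and produces an integer $m\in\N$ together with a germ $\Gamma$ of complex analytic curve at the origin all of whose points are fixed by $F^{m}$.

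The key observation is that such a $\Gamma$ is, by definition, a one-dimensional germ, hence an infinite set accumulating at $0$: every punctured neighborhood of the origin meets $\Gamma\setminus\{0\}\subset \mathrm{Fix}(F^{m})\setminus\{0\}$. Consequently $0$ fails to be an isolated point of $\mathrm{Fix}(F^{m})$ for that particular $m$, which contradicts the standing hypothesis that $0$ is an isolated fixed point of $F^{m}$ for every $m\in\N$ (in particular for this one). Therefore $F$ cannot have the finite orbits property.

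The only point worth spelling out is the degenerate case: one must record that the curve $\Gamma$ delivered by the Fixed Point Curve Theorem is genuinely positive-dimensional, i.e. not the reduced point $\{0\}$, since otherwise the contradiction would evaporate; but this is built into the meaning of ``germ of complex analytic curve'' in the statement of that theorem, so it is not a real obstacle. In short, all the work is carried by the Fixed Point Curve Theorem and the corollary follows in one step. If desired one can close by remarking that this sharpens the quoted consequence of Theorem A of \cite{rebelo-reis2015a}: here it suffices to inspect the behaviour of the iterates $F^{m}$ at the origin alone, rather than at every periodic point of $F$ near $0$.
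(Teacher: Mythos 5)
Your proof is correct and is exactly the argument the paper intends: the corollary is stated as an immediate consequence of the Fixed Point Curve Theorem, and your contrapositive reduction — finite orbits would force some $F^m$ to have a positive-dimensional curve of fixed points through $0$, so $0$ could not be isolated in $\mathrm{Fix}(F^m)$ — is precisely that one-step deduction. No discrepancies.
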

Our  approach to show the Fixed Point Curve Theorem relies on
describing the connected components of the 
set of periodic points of  $F\in\diff{<\infty}{2}$.
\begin{theorem}
\label{thm:structure}
Let $F\in\diff{<\infty}{2}$. Let $B$ be an open or closed ball centered at the origin 
such that  $F$ and $F^{-1}$
are defined in a neighborhood $U$ of $\overline{B}$ and $F$ has finite orbits in $U$. 
Consider the sets 
\[ \mathrm{Per}_{k} (F) = \{ p \in B; \ 
p, \ F(p), \hdots, F^{k}(p) \in B \ \mathrm{and} \ F^{k}(p)=p \} \]
 for $k \in {\mathbb N}$. Let $C$ be a connected component of 
$\mathrm{Per} (F):=\cup_{k=1}^{\infty}  \mathrm{Per}_{k} (F)$. 
Then, $C$ is semianalytic and there exists $m=m(C)$ such that $C$ is a connected component of  
the semianalytic set $\mathrm{Per}_{m} (F)$. Moreover,  if $B$ is an open ball then
$C$ is complex analytic in $B$ and the irreducible components of $C$ have positive dimension.
\end{theorem}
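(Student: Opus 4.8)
The plan is to analyze the sets $\mathrm{Per}_k(F)$ one $k$ at a time and then patch the analysis together using the finite orbits hypothesis. First I would observe that each $\mathrm{Per}_k(F)$ is a semianalytic subset of $B$: it is cut out inside $B$ by the real-analytic (in fact, complex-analytic where defined) conditions $F(p),\dots,F^{k-1}(p)\in B$ and the complex-analytic equation $F^k(p)=p$, intersected with $B$. The closure/boundary contributions coming from the constraints ``$F^j(p)\in B$'' are what make the set merely semianalytic rather than analytic when $B$ is closed; when $B$ is an \emph{open} ball, those constraints define open conditions, so on a neighborhood of any point of $\mathrm{Per}_k(F)$ the set coincides with the zero set of the holomorphic map $p\mapsto F^k(p)-p$, hence is complex analytic there. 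This already gives the last sentence's analyticity claim, modulo the statement about irreducible components having positive dimension.

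Next I would handle the key structural claim: that a connected component $C$ of $\mathrm{Per}(F)=\bigcup_{k\ge 1}\mathrm{Per}_k(F)$ is actually a connected component of a single $\mathrm{Per}_m(F)$. The finite orbits property enters decisively here. For $p\in\mathrm{Per}(F)$, let $k(p)$ be its exact period (the minimal $k$ with $F^k(p)=p$, all intermediate iterates lying in $B$); finiteness of orbits in $U\supset\overline{B}$ is what guarantees $k(p)<\infty$ and, more importantly, that $k$ is \emph{locally bounded}: near a periodic point $p$ of period $k$, nearby points either fail to return to $B$ or return after a period dividing a controlled multiple of $k$ — I would make this precise by noting that the orbit of a point close to $p$ shadows the finite orbit of $p$, which stays in the interior of $B$ except possibly at finitely many controllable spots, so the return time is locally constant or bounded. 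Combined with the fact that $\mathrm{Per}_k(F)\subset\mathrm{Per}_{jk}(F)$ and that a connected, locally finite-dimensional semianalytic set has only finitely many ``period values'' appearing, I would conclude there is a single $m$ (e.g. a common multiple) with $C\subset\mathrm{Per}_m(F)$, and then that $C$ is open and closed in $\mathrm{Per}_m(F)$, i.e. a connected component of it.

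Finally, for the positive-dimensionality of the irreducible components of $C$ when $B$ is open: suppose some irreducible component of $C=\{F^m=\id\}\cap(\text{open conditions})$ were a point $\{p\}$, so that $p$ is an isolated fixed point of $F^m$. I would derive a contradiction with the finite orbits property by a local dynamics argument at $p$: an isolated fixed point of a local biholomorphism that is not the identity forces, via the Stable Manifold Theorem / the results of Abate and L\'opez-Hernanz--Sanz quoted in the introduction (applied to $F^m$ or a further iterate, after normalizing so that $p$ is at the origin), the existence of orbits accumulating at $p$ — either a parabolic curve if $D_p F^m$ is unipotent, or stable/unstable manifolds if it has an eigenvalue off the unit circle, and by Corollary~\ref{cor_fin.orb_imp_mod1} the eigenvalues are on the unit circle, so one is a root of unity situation handled by Theorem~\ref{thm_at_least_one_root}'s machinery or the parabolic case. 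In every case one gets an infinite orbit near $p$, contradicting finite orbits. Hence no irreducible component of $C$ is zero-dimensional.

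The main obstacle I anticipate is the second step: rigorously controlling the period function $k(p)$ near the boundary of $B$ and showing the ``finitely many periods'' phenomenon on a connected component — one has to rule out a sequence of points in $C$ with periods tending to infinity, which requires genuinely using that orbits are finite in the slightly larger set $U$, not just in $B$, together with a semianalyticity/curve-selection argument to promote pointwise finiteness to a uniform bound along $C$.
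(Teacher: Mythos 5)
The first step (semianalyticity of each $\mathrm{Per}_k(F)$, and local analyticity when $B$ is open) is correct and matches Lemma~\ref{lemma_ext_of_the_comp} in the paper. The real content of the theorem, however, lies in the second step, and here your argument has a genuine gap that your closing paragraph essentially concedes.

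The claim that near a periodic point $p$ of period $k$, nearby periodic points ``return after a period dividing a controlled multiple of $k$'' is false in general and is precisely what the theorem must rule out. Consider what happens if $D_p F^k$ were conjugate to an irrational rotation: points on nearby invariant circles would have unbounded periods. Nothing in the finite orbits hypothesis directly bounds the local period function, and a ``shadowing'' argument cannot produce such a bound for free. The paper fills this gap with three concrete ingredients you don't have. First, Lemma~\ref{lema.da.unip.com.cur.inv}: if two distinct non-trivial germs of analytic fixed-point sets for $\psi$ pass through a point, then $\psi$ is tangent to the identity there; this is used to show that whenever two components of $\overline{\mathrm{Per}}_a(F)$ and $\overline{\mathrm{Per}}_b(F)$ meet, $D_pF^{ab}$ is unipotent at some intersection point. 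Second, Lemma~\ref{lema.esp.const.}: on a connected component of $\overline{\mathrm{Per}}_{kl}(F)$, unipotence of $D_pF^k$ propagates, because $\mathrm{tr}(D_q F^k)$ and $\det(D_q F^k)$ are holomorphic on the irreducible components of $\{F^k=\mathrm{id}\}$ and take values in the compact sets $[-2,2]$ and $S^1$ (their eigenvalues are on the unit circle by Corollary~\ref{cor_fin.orb_imp_mod1}), so by the open mapping theorem they are locally constant; combined with Lemma~\ref{lema.curva.fixa.sing.} applied to the nilpotent infinitesimal generator, this forces the whole component into $\overline{\mathrm{Per}}_k(F)$. Third, and decisively, the Sierpi\'nski theorem: a connected component of $\overline{\mathrm{Per}}(F)$ is a continuum covered by countably many maximal elements of the family $\mathcal A$ of components, which are pairwise disjoint by Lemma~\ref{teo_stab_of_cla}, so at most one is non-empty. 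Without Sierpi\'nski you have no mechanism to collapse a countable union into a single $\mathrm{Per}_m(F)$; ``a connected semianalytic set has only finitely many period values'' is not a general fact — a countable union of semianalytic sets need not be semianalytic, and the semianalyticity of $C$ is part of what is being proved.

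Your argument for positive dimensionality is also circular as stated: you invoke Theorem~\ref{thm_at_least_one_root}, which is proved \emph{from} the Fixed Point Curve Theorem, which is in turn proved simultaneously with Theorem~\ref{thm:structure}. The paper avoids this by constructing, independently (Lemma~\ref{lema.do.compacto.n=2}), a non-trivial continuum $K\subset\overline{\mathrm{Per}}(F)$ with $0\in K$, $K\cap\partial B\neq\emptyset$ and $F(K)=K$, as a Hausdorff limit of invariant compacta built from the perturbed maps $F_n=e^{-1/n}F$; the component through the origin then contains $K$ and hence has positive dimension, and the case of a general component is reduced to this by recentering at a periodic point. The parabolic-curve alternative you sketch could perhaps be made to work, but it would need to be rearranged so as not to presuppose results that come later in the logical chain, and it would still need a separate treatment of the case $D_pF^m$ semisimple with all eigenvalues roots of unity.
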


 Suppose that $B$ is a closed ball since it is simpler to work in compact sets. 
Let ${\mathcal B}$ be the set of points $p \in B$ such that the map $q \mapsto \sharp O_{F,B} (q)$ is an unbounded function in every neighborhood of $p$. 
Such a set is the analogue for diffeomorphisms of the  so called {\it bad set} associated to smooth
foliations by compact leaves of compact manifolds; it consists of the leaves  where 
the volume function (defined in the space of leaves) is not locally bounded. 
The properties of the bad set are one of the ingredients 
used by Edwards, Millet and Sullivan to show that, under a suitable homological condition, 
the volume function associated to a smooth foliation by compact leaves is uniformly bounded 
\cite{edwards-millet-sullivan1975}. In the finite orbits case for $n=2$, the bad set ${\mathcal B}$ 
is contained in   $\mathrm{Per} (F)$ and moreover, the connected
components of ${\mathcal B}$ are also connected components of   $\mathrm{Per} (F)$.  
In general, the structure of the bad set can be very complicated. 
However, the finite orbits property constrains the connected components of the bad set 
${\mathcal B}$ to be simple for $n=2$. Indeed they are 
semianalytic by Theorem \ref{thm:structure}.

Our results can be used to study holomorphic foliations of codimension $2$ defined in a neighborhood of 
a compact leaf and whose leaves are closed. Such a problem will be contemplated in future work.

Section \ref{sec.1} introduces the setting of the paper along with some elementary results. 
Theorem \ref{thm_flow_fin_orb} is proved in section \ref{sec:one_parameter}.
We show Theorems \ref{thm:structure}, \ref{thm_at_least_one_root} and the 
Fixed Point Curve Theorem in section  \ref{sec:fixed}.
Finally, we provide the examples in Theorem \ref{thm_irrat_bih_with_fin_orb}
in section \ref{sec:non-virtual}.

\section{Notations and first results}\label{Notations and first results}\label{sec.1} 

As above, we denote by $\diff{}{n}$ the group of germs of biholomorphisms fixing $0\in \C^n$ and by $\diff{<\infty}{n}$ the subset of $\diff{}{n}$ consisting of those that have the finite orbits property. In the remainder of this section we included some elementary results about the finite orbits property for the sake of completeness.

\begin{propo}\label{propo_equiv_of_fin.orb}\label{lemma.gn.inv.fin.orb.}\emph{Let $F\in \difn.$ 
		\begin{itemize}
			\item[(i)] (Invariance by analytic conjugation) If $F=HGH^{-1}$ for some $H\in \diff{}{n},$  then 
			$$F\in \diff{<\infty}{n} \Leftrightarrow G\in \diff{<\infty}{n}.$$
			\item[(ii)] (Invariance by iteration) The following affirmations are equivalent.
			\begin{enumerate}
				\item[(a)] $F\in \diff{<\infty}{n}.$ 
				\item[(b)] $F^m\in \diff{<\infty}{n},\ \forall m\in \N.$
				\item[(c)] $F^m \in\diff{<\infty}{n}$ for some   $m\in \N.$
			\end{enumerate}
	\end{itemize}}
\end{propo}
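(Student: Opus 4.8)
The plan is to prove both parts directly from the definitions, tracking the neighborhoods carefully. For part (i), suppose $G \in \diff{<\infty}{n}$, so there are representatives $G : U \to V$ and a neighborhood $A \subset U \cap V$ of $0$ in which $G$ has the finite orbits property. After shrinking $U$ and $A$, I may assume $H$ and $H^{-1}$ are defined and injective on a neighborhood of $\overline{A}$, and that $F = H G H^{-1}$ holds as a composition of representatives. The key observation is that $H$ conjugates orbits to orbits: if $A' \subset H(A) \cap H^{-1}(A) \cap A$ is a small neighborhood of $0$ chosen so that $H^{-1}(A') \subset A$ and so that iterates of $F$ that stay in $A'$ correspond under $H^{-1}$ to iterates of $G$ that stay in $A$, then $O_{F,A'}(p)$ is in bijection (via $H^{-1}$) with a subset of $O_{G,A}(H^{-1}(p))$, hence finite. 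The main thing to be careful about is the bookkeeping of which iterates remain in which set: one wants $H^{-1}(A') \subset A$ and enough room that the orbit condition transfers, which is arranged by taking $A'$ small enough that $H^{-1}(O_{F,A'}(p))$ never leaves $A$ before the orbit would be detected as infinite. By symmetry (replacing $H$ by $H^{-1}$) this gives the reverse implication, so part (i) follows.

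For part (ii), the implications $(b) \Rightarrow (c)$ is trivial, and $(a) \Rightarrow (b)$ is immediate since an orbit of $F^m$ is a subset of the corresponding orbit of $F$: if $F$ has finite orbits in $A$ then for $p \in A$ we have $O_{F^m, A}(p) \subset O_{F, A}(p)$, which is finite. The substantive direction is $(c) \Rightarrow (a)$: assume $G := F^m \in \diff{<\infty}{n}$ with finite orbits in a neighborhood $A$ of $0$. First shrink $A$ to a neighborhood $A'$ such that $F^j(A') \subset A$ for $0 \le j \le m$ (possible by continuity, as $F^j(0) = 0$). Then for $p \in A'$, any point $F^k(p)$ with $F^0(p), \dots, F^k(p) \in A'$ can be written as $F^r((F^m)^s(p))$ with $0 \le r < m$; the intermediate points $(F^m)^s(p)$ all lie in $A'$, hence $(F^m)^s(p) \in O_{G, A}(p)$, a finite set of size say $N$, and then $F^k(p)$ lies in the finite set $\bigcup_{j=0}^{m-1} F^j(O_{G,A}(p) \cap A')$, which has at most $mN$ elements. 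Thus $O_{F, A'}^{+}(p)$ is finite; the same argument applied to $F^{-1}$ (noting $(F^{-1})^m = (F^m)^{-1} = G^{-1}$, which also has finite orbits by the trivial direction applied to $G$, or directly since finite orbits of $G$ means finite orbits of $G^{-1}$) shows $O_{F, A'}^{-}(p)$ is finite, so $F \in \diff{<\infty}{n}$.

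I expect the only real obstacle to be the careful choice of nested neighborhoods so that the orbit-containment conditions transfer correctly under conjugation and under passing between $F$ and $F^m$; the combinatorics of "which iterates stay in which ball" is where a naive argument could go wrong, but since we only need finiteness (not uniform bounds), shrinking domains repeatedly and using $F^j(0) = 0$ with continuity resolves it. No deep ingredient is needed here — these are the foundational lemmas that make Definition \ref{defi.gn.fin.orb.} well behaved.
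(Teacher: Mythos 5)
Your overall approach matches the paper's: argue directly from the definition of the finite orbits property, tracking which iterates stay in which neighborhood. The treatment of part (i) is essentially the same (the paper proves cleanly that $H(O_{G,A}(x))=O_{F,H(A)}(H(x))$ once $A$ is chosen so that $G(A),H(A),H(G(A))$ lie in a common domain of injectivity), and your write-up of $(c)\Rightarrow(a)$ is in fact more explicit and careful than the paper's somewhat terse dichotomy.

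However, there is a genuine error in your $(a)\Rightarrow(b)$ step. You claim that for $p\in A$ one has $O_{F^m,A}(p)\subset O_{F,A}(p)$, calling this "immediate since an orbit of $F^m$ is a subset of the corresponding orbit of $F$." This inclusion is false with the same set $A$ on both sides. Membership of $F^{mj}(p)$ in $O_{F^m,A}(p)$ only requires the checkpoints $p,F^m(p),\dots,F^{mj}(p)$ to lie in $A$; membership in $O_{F,A}(p)$ requires \emph{every} intermediate iterate $F^i(p)$ for $0\le i\le mj$ to lie in $A$. The orbit of $F^m$ could stay inside $A$ at the multiples of $m$ while the $F$-orbit temporarily exits $A$, in which case $O_{F,A}(p)$ truncates early and the claimed inclusion fails. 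The fix is exactly the two-neighborhood device the paper uses (and which you yourself apply correctly in $(c)\Rightarrow(a)$): choose $V\subset U$ with $F^{\pm j}(V)\subset U$ for $0\le j\le m$, and then $O_{F^m,V}(p)\subset O_{F,U}(p)$ for $p\in V$, since now whenever the $F^m$-checkpoints lie in $V$ the intermediate $F$-iterates are automatically trapped in $U$. You do flag the nested-neighborhood bookkeeping as the "only real obstacle" at the end, so you are aware of the phenomenon in principle, but the argument as written for $(a)\Rightarrow(b)$ asserts a false inclusion and needs this correction.
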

\begin{proof}
	(i) Assume $H\circ G=F\circ H$ and let $U$ be a connected neighborhood of $0$ in which all germs involved have injective representatives. There exists a neighborhood $0\in A\subset U$ such that $G(A),H(A),H(G(A))\subset U.$ By using $H\circ G=F\circ H$, we can show by induction that, if $x,G^{\pm}(x),...,G^{\pm k}(x)\in A,$ then 
	$F^l(H(x))=H(G^l(x))$ for $l=\pm 1,...,\pm k.$ Therefore,  
	\[H (O_{G,A}(x))= O_{F, H(A)}(H(x))\ \mbox{ for any }x\in A.\]
	In particular, $G$ has the finite orbits property in $A$ if and only if $F$ has the finite orbits property in $H(A).$ This shows (i). 
	
	(ii) (a)$\Rightarrow$ (b): Suppose $F\in \diff{<\infty}{n}$ and let $U$ be a neighborhood of $0$ in which $F$ e $F^{-1}$ are defined and have finite orbits. Given $m\in \N,$ there exists a neighborhood $V$ of $0$ such that $F^{\pm j}(p)\in U$ for all $ p\in V$ and $0\leq j\leq m.$ 
	As a consequence, we obtain 
	\[O_{F^m,V}(p)\subset  O_{F,U}(p),\ \forall p\in V,\]
	 and hence $F^m\in \diff{<\infty}{n}$ for any $m\in \N.$ It is obvious that (b) $\Rightarrow$(c). Let us show that (c)$\Rightarrow$(a). Let $m\in \N$ such that $F^m\in \diff{<\infty}{n}$. As before, let $V$ be a neighborhood in which $F^m$ has finite orbits. So, for each $p\in V,$ either the set $\{ j \in {\mathbb Z} : F^{mj}(p) \in V\}$ is finite or $F^{mk}(p)=p$ for some $k \in {\mathbb Z}^{*}.$ In any case, we see that  $O_{F,V}(p)$ is a finite set; that is, $F\in \diff{<\infty}{n}.$  This concludes the proof.
\end{proof}

We say that a biholomorphism $F\in \difn$ is periodic if it is a finite order element of the group $\mathrm{Diff}({\mathbb C}^{n},0),$ that is, if there exists $m\in \N$ such that $F^m=\id.$ In dimension $n=1,$ Mattei and Moussu proved in \cite{mattei-moussu1980} 
that the finite orbits property is equivalent to periodicity. For dimension $n>1,$ the equivalence is far from be true. For example, the biholomorphism  
$F(x,y)=(x,x+y)$ is non-periodic, but has the finite orbits property in each bounded neighborhood of $0\in \C^2:$ the line $\{x=0\}$ is the set of fixed points of $F$ and $F$ is a non-trivial translation on $\{x=c\}$ with $c \neq 0.$

\begin{rem}\emph{The subset $\diff{<\infty}{}$ of $ \diff{}{} $ is not a subgroup. For example, the biholomorphisms $F(x)=-x$ and $G(x)=-\frac{x}{1-x}$ are periodic of period 2 and, hence, 
belong to $\diff{<\infty}{}.$ However, the composition 
		\[H(x)=(F\circ G)(x)=\frac{x}{1-x}\]
		is not periodic, since
		$H^n(x)=\frac{x}{1-nx},\ n\in \N.$ 
		On the other hand, it is easy to check that the subset of $\diff{}{}$ formed by the linear isomorphisms with finite orbits is 
		a subgroup of $\diff{}{},$ isomorphic to the group of roots of unity. Nevertheless, this does not hold for dimension greater than $1$ (cf. Corollary \ref{cor.of.nao.subg})}\end{rem}

\begin{propo}[Linear case] \label{proposition.car.fin.orb.lin.case}\label{propolin_fin.orb_iff_rat}
	Let $F\in \diff{}{n}$ be analytically linearizable. 
	Then $F$ has the finite orbits property if and only if its eigenvalues are roots of unity. Furthermore, if $m$ is the least positive integer such that $F^m$ is unipotent then every periodic point of $F$ is a fixed point of $F^m$.
\end{propo}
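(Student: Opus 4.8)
The plan is to reduce everything to the linear model. Since $F$ is analytically linearizable, Proposition \ref{propo_equiv_of_fin.orb}(i) says that $F$ has the finite orbits property if and only if its linear part $A := D_0 F$ does, so I may assume $F(z) = Az$ for $A \in \mathrm{GL}(n,\C)$. First I would prove the ``only if'' direction in its contrapositive form: suppose some eigenvalue $\lambda$ of $A$ is not a root of unity. There are two cases. If $|\lambda| \neq 1$, pick an eigenvector $v$; then the orbit $\{A^j v\}_{j \geq 0}$ or $\{A^{-j} v\}_{j \geq 0}$ has norms tending to $\infty$, so it leaves every bounded neighborhood but is infinite inside any neighborhood that is a cone, contradicting finite orbits — more carefully, one restricts to the $F$-invariant line $\C v$ and observes the orbit of any nonzero point on it is infinite in that line intersected with a small ball. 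If $|\lambda| = 1$ but $\lambda$ is not a root of unity, then on the invariant line (or the invariant subspace where $A$ acts with this eigenvalue) the powers $\lambda^j$ are dense on the unit circle and pairwise distinct, so again the orbit of a nonzero eigenvector is an infinite subset of any neighborhood of $0$. Hence the finite orbits property forces every eigenvalue to be a root of unity.

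For the ``if'' direction, suppose all eigenvalues of $A$ are roots of unity. Let $m$ be the least common multiple of their orders, so that every eigenvalue of $A^m$ equals $1$; equivalently $A^m$ is unipotent, i.e. $A^m = I + N$ with $N$ nilpotent. I claim $F$ has finite orbits in every bounded neighborhood of $0$. Writing the orbit of a point $p$ as the union over residues $r \in \{0, 1, \dots, m-1\}$ of the sets $\{A^{mj+r} p : j \in \Z\}$, it suffices to show each sequence $j \mapsto (I+N)^j q$ (for $q = A^r p$) either leaves every bounded set or is eventually constant. By the binomial expansion, $(I+N)^j = \sum_{k=0}^{n-1} \binom{j}{k} N^k$, whose entries are polynomials in $j$; so either $N q = 0$, in which case $(I+N)^j q = q$ for all $j$ and the partial orbit is a single point, or $N q \neq 0$, in which case $\|(I+N)^j q\| \to \infty$ as $|j| \to \infty$ (the top nonvanishing term $\binom{j}{k} N^k q$ dominates), so only finitely many iterates stay in a given bounded neighborhood. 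Either way $O_{F,U}(p)$ is finite for $U$ bounded, establishing the finite orbits property.

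Finally, the last sentence: with $m$ as above (least positive integer with $A^m$ unipotent), the computation just made shows that a point $p$ is periodic precisely when its forward-backward orbit is finite, which by the dichotomy happens exactly when $N(A^r p) = 0$ for every $r$, and a short check shows this is equivalent to $N p = 0$, i.e. $A^m p = p$; thus every periodic point is a fixed point of $F^m$. The main obstacle is purely bookkeeping — handling the Jordan/unipotent block structure cleanly and making the ``polynomial growth forces escape from bounded sets'' estimate rigorous — rather than any conceptual difficulty; the only subtlety to watch is that in the $|\lambda| = 1$ non-root-of-unity case one must exhibit the infinite orbit inside an arbitrarily small ball, which works because the eigenvector can be rescaled to have arbitrarily small norm while its whole orbit stays on the sphere through it.
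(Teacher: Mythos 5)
Your proof is correct, and it follows the same strategy as the paper in the reduction to the linear model and the ``only if'' direction (eigenvector with eigenvalue $\lambda$; the orbit stays on a sphere when $|\lambda|=1$ and has an infinite bounded half-orbit when $|\lambda|\neq 1$). Where you diverge is in the ``if'' direction and the characterization of periodic points: the paper reduces to a single Jordan block with root-of-unity eigenvalue and writes out $F^m(x)$ coordinate by coordinate, reading off the escape behavior from the entry $m\lambda^{m-1}x_{j_0}+\lambda^m x_{j_0+1}$. You instead set $m$ to be the least integer with $A^m=I+N$ unipotent, expand $(I+N)^j=\sum_{k=0}^{n-1}\binom{j}{k}N^k$ for $j\in\Z$, and use the polynomial-growth dichotomy ($Nq=0$ versus $\|(I+N)^jq\|\to\infty$) together with the observation that $N=A^m-I$ commutes with $A$, so $N(A^rp)=0$ for all residues $r$ iff $Np=0$, i.e.\ $A^mp=p$. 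Your route avoids Jordan normal form entirely and makes the equality $\mathrm{Per}(F)=\mathrm{Fix}(F^m)$ fall out directly from the commutation; the paper's route is more concrete but requires the Jordan block bookkeeping and an implicit passage from blocks back to the general case. One cosmetic slip: in the $|\lambda|\neq 1$ case your first sentence says the orbit ``is infinite inside any neighborhood that is a cone,'' which is garbled; what matters (and what you say correctly immediately after) is that the half-orbit with norms tending to $0$ gives infinitely many iterates inside any ball, so $O_{F,B}(v)$ is infinite.
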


\begin{proof}
	Applying Proposition \ref{propo_equiv_of_fin.orb} (i), we can assume $F(x)=Ax,$ where $A\in\mathrm{GL}(n,\C).$ Let $\lambda\in \spec(A)$ and let 
	$B$ be an arbitrary ball centered at $0.$ If $v\in B$ is a eigenvector of $F$ associated with $\lambda,$ then $F^m(v)=\lambda^mv$ for all $m\in \Z.$ In particular, $O_{F,B}(v)$ is finite if and only if $\lambda$ is a root of unity. Hence,   the finite 
	orbits property implies that  all eigenvalues of $F$ are roots of unity.
	 
	Reciprocally, suppose that the eigenvalues of $F$ are roots of unity. It suffices to show that $\C ^n$ admits a decomposition $\C^n=V_1\oplus\cdots \oplus V_r$ as $F$-invariant subspaces such that, for each $x_j \in V_j,$ the orbit ${\mathcal O}_{F,U}(x_j)$ is finite for any bounded set $U$ and any $x_j\in U\cap V_j.$ Thus,  
	 it suffices to consider the case where $A$ is a Jordan block
	\[	\label{bloco de jordan n-dim} A = \left[
	\begin{array}{ccccc}
	\lambda &  &  & &\\
	1 & \lambda &  & &\\
	& 1 & \ddots & &\\
	&  & \ddots & \lambda& \\
	&  &    &1& \lambda\\
	\end{array}
	\right], \]
	where $\lambda$ is a root of unity.   We can assume $n>1$ since the remaining case is 
	trivial. 
	Let $x=(x_1,...,x_n)\in\C^n.$ 
	By using induction on $m$ it is easy to see that
	\[F^m(x)=(\lambda^mx_1,m\lambda ^{m-1}x_1+\lambda^m x_2,\ 
	... ,\	\ P_{n,m}(x_1,...,x_{n-2})+m\lambda ^{m-1}x_{n-1}+\lambda^mx_n),\]
	with $P_{j,m}$ linear for all $j,m\in\N.$ 
	In other words, if $\pi_j$ is the $j$th projection on $\C^n,$ then
	\[\begin{array}{cclcc}
	\pi_1(F^m(x)) & = & \lambda^m x_1,  &\\
	\pi_2(F^m(x)) & = & m\lambda ^{m-1}x_1+\lambda^m x_2, & &\\
	\pi_3(F^m(x)) & = & P_{3,m}(x_1)+m\lambda^{m-1}x_{2}+\lambda ^mx_3 & \\
	& \vdots &  &  \\
	\pi_n(F^m(x)) & = & P_{n,m}(x_1,...,x_{n-2})+m\lambda ^{m-1}x_{n-1}+\lambda^mx_n.   &\\
	\end{array} \]
	Consider $x \neq 0.$ Let $j_0$ be
	 the first index such that $x_{j_0} \neq 0$. If $j_0 = n$ then $x$ is periodic;  so it has a finite orbit. Consider $j_0 < n.$ Then
	$$|\pi_{j_0+1}(F^m (x))|=|m\lambda^{m-1}x_{j_0}+\lambda ^mx_{j_0+1}|\to \infty\ \mbox{ when }\ m\to \infty.$$
	In any case, we see that $x$ has finite positive orbit in any bounded neighborhood of $0.$ Since the eigenvalues of $A^{-1}$ are the inverses of the eigenvalues of $A,$ we show that every negative orbit of $x$ is finite analogously. This shows that $F$ has finite orbits. Moreover, if $m$ is the least positive integer such that $F^m$ is unipotent, the discussion above shows that for a Jordan block the set of periodic points coincide with $\mathrm{Fix}(F^m).$ Therefore, $\mathrm{Fix}(F^m)$ is the set of periodic points of $F$.
\end{proof}

\begin{cor} \label{cor.of.nao.subg}
	Suppose $n \geq 2$. Then the subset of $\diff{}{n}$ consisting of all linear biholomorphisms with finite orbits is not a subgroup of  $\diff{}{n}.$
\end{cor}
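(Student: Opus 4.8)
The plan is to reduce the statement to an elementary linear algebra fact via Proposition \ref{proposition.car.fin.orb.lin.case}. By that proposition, a linear biholomorphism $x\mapsto Ax$ of $\C^{n}$ has the finite orbits property if and only if every eigenvalue of $A$ is a root of unity. Hence it suffices to produce $A,B\in\mathrm{GL}(n,\C)$ such that the spectra of $A$ and of $B$ consist of roots of unity while $AB$ has an eigenvalue that is not a root of unity: the associated linear maps then lie in the subset under consideration, but their composition does not, so the subset is not closed under composition and, in particular, is not a subgroup.

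First I would settle the case $n=2$. Take the unipotent matrices $A=\begin{pmatrix}1&1\\0&1\end{pmatrix}$ and $B=\begin{pmatrix}1&0\\1&1\end{pmatrix}$, each of which has $1$ as its only eigenvalue. Their product $AB=\begin{pmatrix}2&1\\1&1\end{pmatrix}$ has trace $3$ and determinant $1$, so its characteristic polynomial is $t^{2}-3t+1$, with roots $\tfrac{3\pm\sqrt{5}}{2}$. These are real numbers different from $\pm1$; since a root of unity has modulus $1$, neither of them is a root of unity.

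For general $n\geq2$ I would simply pad with an identity block: set $\tilde A=A\oplus I_{n-2}$ and $\tilde B=B\oplus I_{n-2}$, where $I_{n-2}$ denotes the identity matrix of size $n-2$. Then $\spec(\tilde A)=\spec(\tilde B)=\{1\}$, so both $\tilde A$ and $\tilde B$ define linear biholomorphisms with the finite orbits property, whereas $\tilde A\tilde B=(AB)\oplus I_{n-2}$ retains $\tfrac{3+\sqrt{5}}{2}$ among its eigenvalues and hence does not. This proves the corollary. There is no real obstacle in this argument; the only substantive input is Proposition \ref{proposition.car.fin.orb.lin.case}, and the remaining verifications---computing the spectrum of a $2\times2$ product and checking that adjoining an identity block leaves the offending eigenvalue in place---are routine.
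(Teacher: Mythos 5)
Your proof is correct and takes essentially the same approach as the paper: both use a pair of unipotent linear maps (Proposition \ref{proposition.car.fin.orb.lin.case} ensuring finite orbits) whose composition has eigenvalues $\tfrac{3\pm\sqrt{5}}{2}$, which are not roots of unity. The paper writes the maps directly in $n$ variables rather than padding a $2\times 2$ example with an identity block, but the underlying idea is identical.
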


\begin{proof}
	Let
	\[F(x_1,...,x_n)=(x_1,x_1+x_2,x_3,...,x_n)\ \mbox{ and }  \ G(x_1,...,x_n)=(x_1+x_2,x_2,x_3,....,x_n). \]
	Then $F$ and $G$ are linear and have the finite orbits property, since they are unipotent. However, $F\circ G=(x_1+x_2,x_1+2x_2,x_3,...,x_n)\notin \diff{<\infty}{n},$ since 
	 $\frac{3 + \sqrt{5}}{2}$ and $\frac{3 - \sqrt{5}}{2}$ are eigenvalues of $F \circ G$.
\end{proof}
\strut 
\subsection{Formal diffeomorphisms}
We denote by ${\mathcal O}_{n,0}$ (resp. $\hat{\mathcal O}_{n,0}$) the local ring of 
convergent (resp. formal) power series with complex coefficients in $n$ variables centered at the origin
of ${\mathbb C}^{n}$. Let ${\mathfrak m}$ be the maximal ideal of  $\hat{\mathcal O}_{n,0}$.
\begin{defi}
The group $\fdiff{}{n}$ of formal diffeomorphisms consists of the elements 
$F = (F_1, \hdots, F_n)$  of 
${\mathfrak m} \times \hdots \times {\mathfrak m}$ such that its first jet $D_0 F$ belongs to 
$\mathrm{GL}(n, {\mathbb C})$. 
\end{defi}
\begin{rem}
We can identify $\diff{}{n}$ with the subset of $\fdiff{}{n}$ of formal diffeomorphisms 
$F=(F_1, \hdots, F_n)$ such that $F_j \in {\mathfrak m} \cap {\mathcal O}_{n,0}$ for 
every $1 \leq j \leq n$ as a consequence of the inverse function theorem.
\end{rem}
\begin{defi}
 We denote by $\fdiff{u}{n}$ the set of {\it unipotent} formal biholomorphisms, that is, the elements $F\in \fdiff{}{n}$ such that $\spec(D_0F)=\{1\}$.  Its subset 
$\fdiff{1}{n} := \{ F \in \fdiff{}{n} : D_0 F = \mathrm{id} \}$ is called the group of 
{\it tangent to the identity} formal diffeomorphisms in $n$ variables.  
\end{defi}
\subsection{Vector fields and flows}\label{sec.campos.e.fluxos}
Let $\mfx(\C^n,0)$ denote the Lie algebra of singular
local holomorphic vector fields at $0\in \C^n.$  
Let $X\in \mfx(\C^n,0)$  
and denote by $\phi(z,t)$ its local flow, defined in a neighborhood of $\{ 0 \} \times {\mathbb C}^{n}$ 
in ${\mathbb C}^{n+1}$.  
Then, for each $t\in {\mathbb C}$, the map $z\mapsto \phi^t(z)$ 
defines a biholomorphism $\phi^t\in\difn,$ 
the so called   {\it time-$t$}  
map of $X,$ also denoted by $\exp(tX).$ 
We also use  $\exp(1X)=\exp(X)$ to denote the time-$1$ map of $X.$ It turns out that if $f\in{\mathcal O}_{n,0}$ then 
\[f\circ\exp(tX)(z)=f(z)+\sum_{j=1}^{\infty}\frac{t^j}{j!}X^j(f)\]
 by Taylor's formula,  
where $X$ is now understood as a derivation in the ring ${\mathcal O}_{n,0}$ and $X^j=X\circ X^{j-1}.$ By considering $f=z_j,\ j=1,...,n,$ we obtain
\[\exp(tX)(z_1,...,z_n)=\left(z_1+\sum_{j=1}^{\infty}\frac{t^j}{j!}X^j(z_1),\ ...\ ,z_n+\sum_{j=1}^{\infty}\frac{t^j}{j!}X^j(z_n)\right).\]
This last identity allows us to extend the definition of flow associated to a germ of holomorphic singular vector field to a formal singular vector field $X\in\widehat{\mfx}(\C^n,0),$ i.e. a derivation of the ring of formal power series that preserves the maximal ideal. 
Indeed $\exp (t X) \in \fdiff{}{n}$ for all $X \in \widehat{\mfx}(\C^n,0)$ and $t \in {\mathbb C}$.
We say two vector fields $X,Y\in\vf{}{n}$ are analytically equivalent if there exists  $H\in \difn$ such that $H_*X=Y,$ that is 
\[D_xH \cdot X(x)=Y(H (x))\]
 for any $x$ in a neighborhood of the origin. 
The map $H$ is called a   \textit{conjugacy} 
between $X$ and $Y.$ In this case,  one could show that $H$ is also a 
  \textit{conjugacy}   between their time-$t$ maps for any $t \in {\mathbb C}.$ 

\begin{defi}\emph{We say that a singular vector field $X\in \vf{}{n}$ has the finite orbits property (denoting by $X\in \vf{<\infty}{n}$) if  $\exp(X)\in \diff{<\infty}{n}$. }
\end{defi}

Now we derive some properties of finite orbits vector fields. 
\begin{cor}
	Let $X\in\vf{}{n}$ be a singular vector field.
	\begin{itemize}
		\item[(i)] If $Y\in\vf{}{n}$ is singular and analytically conjugated to $X$ then
		\[ X \in\vf{<\infty}{n} \Leftrightarrow Y\in \vf{<\infty}{n}.\]
		\item [(ii)] The following statements are equivalents.
			\begin{enumerate}
			\item[(a)] $X$ has the finite orbits property. 
			\item[(b)] $mX$ has the finite orbits property for all $ m\in \N.$
			\item[(c)] $mX$ has the finite orbits property for some $m\in \N.$
		\end{enumerate}
	\end{itemize}   \end{cor}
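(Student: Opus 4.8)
The plan is to reduce both statements to the corresponding facts for local biholomorphisms, Proposition \ref{propo_equiv_of_fin.orb}, via the relation between a singular vector field and its time-$t$ maps.

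First, recall that if $H\in \difn$ conjugates $X$ to $Y$, i.e. $H_*X=Y$, then $H$ also conjugates the time-$t$ maps, $H\circ \exp(tX)=\exp(tY)\circ H$ for every $t\in{\mathbb C}$. This is the assertion made at the end of Subsection \ref{sec.campos.e.fluxos}; it follows from uniqueness of solutions of the associated ODE (or, in the formal setting, from the explicit power series formula for $\exp(tX)$) together with the identity $(H_*X)^j(f)=X^j(f\circ H)\circ H^{-1}$. Specializing to $t=1$ yields $\exp(Y)=H\circ \exp(X)\circ H^{-1}$. Part (i) is then immediate: by definition $X\in \vf{<\infty}{n}$ means $\exp(X)\in\diff{<\infty}{n}$ and $Y\in \vf{<\infty}{n}$ means $\exp(Y)\in\diff{<\infty}{n}$, and these two conditions are equivalent by Proposition \ref{propo_equiv_of_fin.orb}(i) applied to $F=\exp(Y)$ and $G=\exp(X)$.

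For part (ii), the key observation is the additive law of the flow, $\exp((s+t)X)=\exp(sX)\circ\exp(tX)$, which gives $\exp(mX)=\exp(X)^m$ for every $m\in\N$. Hence $mX\in \vf{<\infty}{n}$ if and only if $\exp(X)^m\in\diff{<\infty}{n}$. Now apply Proposition \ref{propo_equiv_of_fin.orb}(ii) to $F=\exp(X)$: the three conditions ``$\exp(X)^m\in\diff{<\infty}{n}$ for all $m$'', ``$\exp(X)\in\diff{<\infty}{n}$'', and ``$\exp(X)^m\in\diff{<\infty}{n}$ for some $m$'' are mutually equivalent. Reading this back through the definition of the finite orbits property for vector fields, it says precisely that (a), (b) and (c) are equivalent.

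There is essentially no obstacle here; the only points that deserve a word of justification are the two structural identities — that an analytic conjugacy between vector fields descends to an analytic conjugacy between their time-$t$ maps, and that $\exp(mX)=\exp(X)^m$ — both standard, and the first already quoted in the text above. Everything else is a direct application of Proposition \ref{propo_equiv_of_fin.orb}.
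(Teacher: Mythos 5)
Your proof is correct and follows essentially the same route as the paper's: part (i) reduces to Proposition \ref{propo_equiv_of_fin.orb}(i) via the fact that a conjugacy of vector fields conjugates their time-one maps, and part (ii) reduces to Proposition \ref{propo_equiv_of_fin.orb}(ii) via the identity $\exp(mX)=\exp(X)^m$. The only difference is that you spell out the two structural identities in slightly more detail than the paper does.
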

\begin{proof}
	(i): As $\exp(X)$ is conjugated to $\exp(Y)$, we can apply Proposition \ref{propo_equiv_of_fin.orb} (i). 
	
	(ii) Set $F=\exp(X)\in \difn.$ Then 
	\[F^m=\exp(mX)\ \mbox{ for all }\ m\in \Z.\] 
	Then we can apply Proposition \ref{lemma.gn.inv.fin.orb.} (ii).
	\end{proof}

\begin{rem}\emph{
	The last corollary implies that the finite orbits property 
	holds for  $\Z$-multiples of $X$.
	However, in general it is not satisfied for $\R$ or $\C$-multiples of $X$.
	In fact, the time-$1$ map of the one-dimensional field $X=\lambda x\px$ is $f=e^\lambda x,\ \lambda\in \C.$ Since in dimension one finite orbits is equivalent to finite order, it follows that \[X\in \vf{<\infty}{}\Leftrightarrow f \mbox{ has finite order }\Leftrightarrow \lambda\in2\pi i\Q.\]}
\end{rem}

\begin{cor}\label{cor.of.sse.r.campo}
	Let $X\in\mfx(\C^n,0)$ be  an analytically  linearizable vector field. Then
	\[X \in\vf{<\infty}{n}\Leftrightarrow \spec(D_0X)\subset 2\pi i \Q.\]
\end{cor}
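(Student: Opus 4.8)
The plan is to reduce to the linear case via the hypothesis and then combine the previously established results on finite orbits for linearizable diffeomorphisms (Proposition \ref{proposition.car.fin.orb.lin.case}) with the relation between the spectrum of a linear vector field and that of its time-$1$ map. First I would use Corollary (i) above (invariance of the finite orbits property under analytic conjugacy for vector fields): since $X$ is analytically linearizable, there is $H \in \difn$ with $H_* X = X_0$, where $X_0$ is the linear vector field $X_0(z) = (D_0 X)\, z$. Hence $X \in \vf{<\infty}{n}$ if and only if $X_0 \in \vf{<\infty}{n}$, and moreover $\spec(D_0 X) = \spec(D_0 X_0)$, so it suffices to prove the equivalence for the linear vector field $X_0$.

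Next I would compute the time-$1$ map of $X_0$. Writing $A = D_0 X$, the flow of $X_0$ is $\exp(tX_0)(z) = e^{tA} z$, so $F := \exp(X_0)$ is the linear map $z \mapsto e^{A} z$, which is analytically linearizable (it is already linear). By definition $X_0 \in \vf{<\infty}{n}$ means $F \in \diff{<\infty}{n}$, and by Proposition \ref{proposition.car.fin.orb.lin.case} this holds if and only if every eigenvalue of $F = e^{A}$ is a root of unity. Since the eigenvalues of $e^{A}$ are exactly $e^{\mu}$ for $\mu \in \spec(A)$ (using that $A$ and $e^{A}$ can be simultaneously put in upper triangular form), this is equivalent to saying that $e^{\mu}$ is a root of unity for every $\mu \in \spec(A)$, i.e. $e^{k\mu} = 1$ for some $k \in \N$, i.e. $k\mu \in 2\pi i \Z$, i.e. $\mu \in 2\pi i \Q$. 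Therefore $X_0 \in \vf{<\infty}{n} \Leftrightarrow \spec(A) \subset 2\pi i \Q$, which together with the reductions above gives the claim.

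The only mild subtlety — and the step I would be most careful about — is the passage from ``$\spec(e^A) \subset$ roots of unity'' to ``$\spec(A) \subset 2\pi i \Q$''. One must check that the spectral mapping $\spec(e^A) = e^{\spec(A)}$ is used correctly: an eigenvalue $\mu$ of $A$ gives the eigenvalue $e^\mu$ of $e^A$ on the same (generalized) eigenspace, so $e^A$ has all eigenvalues roots of unity precisely when each $e^\mu$ with $\mu \in \spec(A)$ is a root of unity; and $e^\mu$ is a root of unity iff $\mu/(2\pi i) \in \Q$. This is elementary linear algebra, so there is no real obstacle here; the content of the corollary is entirely carried by the two previously proved facts (conjugacy-invariance for vector fields and the linear case for diffeomorphisms).
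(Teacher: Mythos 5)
Your proof is correct and follows essentially the same route as the paper: both reduce to Proposition \ref{proposition.car.fin.orb.lin.case} applied to the time-$1$ map and then use the spectral mapping $\spec(e^{A}) = e^{\spec(A)}$ together with the elementary observation that $e^{\mu}$ is a root of unity iff $\mu \in 2\pi i \Q$. The only cosmetic difference is that you conjugate the vector field to its linear part before exponentiating, whereas the paper exponentiates $X$ directly and notes that $\exp(X)$ is itself a linearizable diffeomorphism (the conjugacy for vector fields carries over to their time-$1$ maps); these are logically equivalent.
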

\begin{proof}
	The time-1 map $F:=\exp(X)$ is linearizable and its eigenvalues have the form $e^\lambda$ with $\lambda\in \spec(D_0X).$ The proof now is a consequence of Proposition \ref{propolin_fin.orb_iff_rat}.
\end{proof}

\begin{example} \emph{Consider the vector field $\dis X=x(2\pi i+y)\px.$ Then 
we have  
		\[\exp(X)=(e^{2\pi i+y}x,y)=(e^yx,y). \] 
		Moreover, in each level $\{y=c\}$ with $c\notin 2\pi i \Q$ every point $x\neq 0$ has an 
		infinite orbit.  Since the linear part $X_0=2\pi i x\px$ has finite orbits, it follows that $X$ is not analytically linearizable.}
\end{example}
\strut

\subsection{Jordan-Chevalley decomposition of vector fields}
We say that a singular vector field $X\in\fvf{}{n}$ is semisimple if it is formally conjugated to a vector field of the form 
$\sum_{j=1}^{n}\lambda_jx_j\frac{\partial}{\partial x_j}.$ We say that
$X$ is nilpotent if the linear part of $X$ is nilpotent. Finally, we say that  
\[X=X_S+X_N\]
is the semisimple/nilpotent decomposition of $X$ if $X_S$ is semisimple, $X_N$ is nilpotent and $[X_N,X_S]=0.$ Every singular formal vector field $X$ admits a unique semisimple/nilpotent decomposition (cf. \cite{martinet1981}). We denote by  $\hat{\mfx}_N(\C^n,0)$ the subset of $\fvf{}{n}$ consisting of the formal nilpotent vector fields.
  \begin{propo}[cf. \cite{ecalle1975, martinet-ramis1983}]
	The image of $\hat{\mfx}_N(\C^n,0)$ by the exponential aplication is $\widehat{\mathrm{Diff}}_{u}(\C^n,0)$ and the map $\exp:\hat{\mfx}_N(\C^n,0)\to\widehat{\mathrm{Diff}}_{u}(\C^n,0)$ is a bijection.
\end{propo}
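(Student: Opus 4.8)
The plan is to establish the sharper statement that \emph{for every $F\in\widehat{\mathrm{Diff}}_u(\C^n,0)$ there is a unique $X\in\hat{\mfx}_N(\C^n,0)$ with $\exp(X)=F$}; this yields surjectivity and injectivity of $\exp$ simultaneously. First I would dispose of the easy inclusion $\exp(\hat{\mfx}_N(\C^n,0))\subseteq\widehat{\mathrm{Diff}}_u(\C^n,0)$: by the formula of Subsection~\ref{sec.campos.e.fluxos}, $\exp(X)(x_i)=x_i+\sum_{m\geq 1}\frac{1}{m!}X^m(x_i)$, so the degree-$1$ part of $\exp(X)(x_i)$ is $\sum_{m\geq 0}\frac{1}{m!}(N^m x)_i=(e^N x)_i$ where $N:=D_0X$; hence $D_0\exp(X)=e^N$ is unipotent when $N$ is nilpotent. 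Conversely, given $F$ with unipotent linear part $A:=D_0F$, the matrix exponential restricts to a bijection between nilpotent and unipotent matrices (its inverse being the finite sum $\log A=\sum_{m\geq 1}\frac{(-1)^{m+1}}{m}(A-\id)^m$), so the linear part of any $X$ with $\exp(X)=F$ must be $N:=\log A$. Fixing the linear vector field $X_L:=\sum_{i,j}N_{ij}\,x_j\,\frac{\partial}{\partial x_i}$, for which $\exp(X_L)$ is exactly the linear map $x\mapsto Ax$, I would look for $X$ in the form $X=X_L+X_2+X_3+\cdots$ with $X_k$ having homogeneous coefficients of degree $k$.

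The $X_k$ are produced by induction on $d\geq 2$. Assume $X_2,\dots,X_{d-1}$ have been chosen so that $\exp(X_L+X_2+\cdots+X_{d-1})$ agrees with $F$ in all degrees $\leq d-1$ (the base case $d=2$ holding because $\exp(X_L)$ is the linear part $A$ of $F$). Write $F_i=(Ax)_i+F_{i,2}+F_{i,3}+\cdots$ for the homogeneous expansion of the components of $F$; I want a unique $X_d$ making $\exp(X_L+\cdots+X_{d-1}+X_d)$ agree with $F$ in degree $d$ as well. Since $X_d$ raises homogeneous degree by $d-1\geq 1$, inserting it leaves the degrees $\leq d-1$ unchanged, and in degree $d$ it enters only linearly: the degree-$d$ part of $\exp(X_L+\cdots+X_{d-1}+X_d)(x_i)$ equals $G_{i,d}+\Phi(X_d)_i$, where $G_{i,d}$ is the already known degree-$d$ part of $\exp(X_L+\cdots+X_{d-1})(x_i)$ and $\Phi$ is a linear endomorphism of the finite-dimensional space $V_d$ of degree-$d$ homogeneous vector fields. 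Using the classical formula for the differential of the exponential map, $\Phi(Y)$ is the vector field $\frac{e^{\mathrm{ad}_{X_L}}-1}{\mathrm{ad}_{X_L}}(Y)=Y+\tfrac12[X_L,Y]+\tfrac16[X_L,[X_L,Y]]+\cdots$ applied to the coordinates $x\mapsto Ax$ — the terms $X_2,\dots,X_{d-1}$ drop out of the degree-$d$ part because their brackets with $Y$ raise the degree. Thus $X_d$ is governed by the linear equation $\Phi(X_d)=(F_{i,d}-G_{i,d})_i$ in $V_d$, uniquely solvable as soon as $\Phi$ is invertible.

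The heart of the matter — and the step I expect to be the main obstacle — is the invertibility of $\Phi$. It factors as $Y\mapsto\frac{e^{\mathrm{ad}_{X_L}}-1}{\mathrm{ad}_{X_L}}(Y)$ followed by multiplication of the coefficient tuple by the matrix $A$; the latter is invertible because $A\in\mathrm{GL}(n,\C)$, so everything reduces to showing that $\frac{e^{\mathrm{ad}_{X_L}}-1}{\mathrm{ad}_{X_L}}=\id+\tfrac12\mathrm{ad}_{X_L}+\tfrac16\mathrm{ad}_{X_L}^2+\cdots$ is invertible on $V_d$. This holds because $\mathrm{ad}_{X_L}$ is a \emph{nilpotent} endomorphism of each $V_d$: after putting $N$ in Jordan form, $X_L$ becomes a combination of the operators $x_j\,\frac{\partial}{\partial x_i}$ with $j<i$, and one checks that $\mathrm{ad}_{X_L}$ then strictly decreases the integer weight $\sum_{\ell}\ell\,\alpha_{\ell}-k$ attached to the monomial $x^{\alpha}\frac{\partial}{\partial x_k}$, so $\mathrm{ad}_{X_L}$ is nilpotent (this is the infinitesimal counterpart of the fact that a unipotent linear map acts unipotently on symmetric powers). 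Consequently $\frac{e^{\mathrm{ad}_{X_L}}-1}{\mathrm{ad}_{X_L}}$ is a finite sum of the form $\id+(\text{nilpotent})$, hence invertible; so $\Phi$ is invertible, the induction goes through, and $X=X_L+X_2+X_3+\cdots$ is the unique element of $\hat{\mfx}_N(\C^n,0)$ with $\exp(X)=F$ (uniqueness because the degree-$d$ part of $\exp(X)$ only involves $X_L,\dots,X_d$). Together with the easy inclusion this shows $\exp\colon\hat{\mfx}_N(\C^n,0)\to\widehat{\mathrm{Diff}}_u(\C^n,0)$ is a bijection.

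An alternative route, which exhibits the inverse map in closed form, is to set $X:=\log F$ through the operator $\log(F^{*})=\sum_{m\geq 1}\frac{(-1)^{m+1}}{m}(F^{*}-\id)^{m}$ acting on $\hat{\mathcal O}_{n,0}$, where $F^{*}$ is composition with $F$. Here $F$ unipotent forces $F^{*}$ to act unipotently on each $\mathfrak m^{k}/\mathfrak m^{k+1}$, which makes $F^{*}-\id$ locally nilpotent for the $\mathfrak m$-adic topology and the series convergent; one then checks that $\log(F^{*})$ is a derivation preserving $\mathfrak m$ (so a singular formal vector field) with nilpotent linear part, and that $\exp$ and $\log$ are mutually inverse on the classes $\hat{\mfx}_N(\C^n,0)$ and $\widehat{\mathrm{Diff}}_u(\C^n,0)$. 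On this route the delicate point is the Leibniz rule for $\log(F^{*})$, which is where the real content sits. In either approach, no analytic estimate is needed: the only genuine difficulty is the interaction between the degree filtration and the infinite sums defining $\exp$ (resp. $\log$), packaged as the invertibility of the cohomological operator $\frac{e^{\mathrm{ad}_{X_L}}-1}{\mathrm{ad}_{X_L}}$.
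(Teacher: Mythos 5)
Your argument is correct, and since the paper states this proposition as a citation to \'Ecalle and Martinet--Ramis without including a proof, there is no in-paper proof to compare against; the relevant comparison is with those references. Your alternative route at the end (defining $\log(F^{*}) = \sum_{m\geq 1}\frac{(-1)^{m+1}}{m}(F^{*}-\id)^{m}$ on $\hat{\mathcal O}_{n,0}$, verifying the Leibniz rule, and using the local $\mathfrak m$-adic nilpotence of $F^{*}-\id$ coming from unipotence) is essentially the standard argument in the cited literature. Your main route --- building the infinitesimal generator degree by degree and reducing existence and uniqueness of each homogeneous piece $X_d$ to the invertibility of $A\circ\frac{e^{\mathrm{ad}_{X_L}}-1}{\mathrm{ad}_{X_L}}$ on the finite-dimensional space $V_d$ --- is a legitimate and somewhat more hands-on variant; what it buys you is that the only nontrivial lemma is the nilpotence of $\mathrm{ad}_{X_L}$ on each $V_d$ (which your weight $\sum_{\ell}\ell\,\alpha_\ell-k$ establishes cleanly once $N$ is taken strictly triangular), rather than the somewhat more delicate derivation property of $\log(F^{*})$. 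The identification of $\Phi$ in degree $d$ with $A\circ\frac{e^{\mathrm{ad}_{X_L}}-1}{\mathrm{ad}_{X_L}}$, the observation that $X_2,\dots,X_{d-1}$ drop out of the degree-$d$ computation because their brackets with $Y\in V_d$ raise degree, and the uniqueness by degree-by-degree matching are all sound. Either route gives a complete proof.
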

\strut

\subsection{Poincar\'{e}-Dulac normal form}

First, let us
recall that a point $\lambda=(\lambda_1,...,\lambda_n)$ of $\C^n$ 
is said  \textit{resonant} if there exists $m\in \N_0^n$ with $|m|\geq 2$ and some  $1\leq k\leq n$ such that  
\[\langle\lambda,m\rangle=\lambda_1m_1+\cdots +\lambda_nm_n=\lambda_k.\] 
This equation is called a resonance and its 
\textit{resonant monomial} is the vector field
\[F_{k,m}~=x^m e_k=(0,...,x_1^{m_1}\cdots x_n^{m_n},...0)\]
Note that $\lambda=(\lambda_1,...,\lambda_n)$ is resonant if and only if $(\lambda_{\sigma(1)},...,\lambda_{\sigma(n)})$ is resonant for every permutation of the set of indexes $\{1,2,...,n\}.$ Keeping this in mind, we say that a singular vector field $X\in \vf{}{n}$ is resonant if the the $n$-tuple formed by the eigenvalues of the  linearization matrix $D_0X$ of $X$ is resonant.
Finally, we say that a singular vector field $X\in \mfx(\C^n,0)$ is in Poincar\'{e} domain if $0$ does not belong to the convex hull of  $\spec(D_0X)=\{\lambda_1,...,\lambda_n\},$ that is, there is no choice of $t_j \in [0,1]$ for $1 \leq j \leq n$ such that 
\[t_1+\cdots +t_n=1\  \ \ \mbox{ and }\ \ \ t_1\lambda_1+\cdots+t_n\lambda_n=0.\]

\begin{P-D*}[cf. \cite{ilyashenko-yakovenko2008} p. 62] \label{teo.forma.normal.de.P-D} Let $X\in\mfx(\C^n,0)$ be a singular vector field in the Poincar\'{e} domain. Then $X$ has only finitely many resonances and it is analytically conjugated to 
	\[Ax+\sum c_{k,m}F_{k,m},\]
	where $A$ is in Jordan normal form, $c_{k,m}\in \C$ and the $F_{k,m}$ are the resonant monomials of $X.$ In particular, if $X$ is in the Poincar\'{e} domain and it has 
	no resonances, then $X$ is analytically linearizable.
\end{P-D*}
\strut 

\subsection{Stable manifold theorem}
We denote by $(W,p)$ the germ of an analytic variety at a point $p$. 
A germ of analytic variety  $W\subset (\C^n,0)$ is said to be invariant by $F\in\diff{}{n}$ if the germs of $W$ and $F(W)$ coincide at the origin. Additionally, we say that $W$ is stable if there exists a neighborhood $U$ of $0$ where $F$ and $W$ are defined and satisfy
\begin{enumerate}
	\item $F(U\cap W)\subset U\cap W$ and
	\item for each $x\in U\cap W,$ the positive orbit $(F^m(x))_{m\in \N}$ converges to zero. 
\end{enumerate}
Analogously, a variety $W\subset (\C^n,0)$ is invariant by $X \in \mfx (\C^n,0)$ 
if $X(x)$ is tangent to $W$ at $x,$ for every regular point $x\in (W,0).$ In particular, $W$ is invariant by $\exp(X).$ We say that $W$ is stable by $X$ if is stable for its real flow, i.e. 
for any $x \in U \cap W$, we have $\mathrm{exp}(t X)(x) \in U \cap W$ for any 
$t \in {\mathbb R}^{+}$, and $\lim_{t \to \infty} \mathrm{exp} (t X)(x) =0$.

\begin{hsmtb*}[cf. {\cite[p. 26]{ruelle1989}}, {\cite[p.107]{ilyashenko-yakovenko2008}}]
\label{teo.tve.for.bih.} \label{thm_hsmt}
	Let $F\in\diff{}{n}$ and $\rho\in (0,1].$  Let 
\[ L^{-} =\oplus_{\lambda \in A_{\rho}^{-}} \ker (D_0 F - \lambda \mathrm{id})^{n} \] 
be the sum of the generalized eigenspaces associated to the eigenvalues of $D_0F$ in
	\[A_\rho^-=\{\lambda\in \spec (D_0F);\ |\lambda| < \rho\}.\] 
	Then there exists a unique $F$-stable manifold $W^{-}$   
	whose tangent space at $0$ is $L^{-}$.
	\end{hsmtb*}
\begin{cor}\label{cor_fin.orb_imp_mod1}
Let $F\in \diff{<\infty}{n}$. 
Then $|\lambda|=1$ for any $\lambda\in \spec(D_0F).$
\end{cor}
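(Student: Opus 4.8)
The plan is to prove Corollary \ref{cor_fin.orb_imp_mod1} by a dichotomy on the modulus of an eigenvalue, using the Holomorphic Stable Manifold Theorem to rule out both $|\lambda|<1$ and $|\lambda|>1$. Suppose for contradiction that $F\in\diff{<\infty}{n}$ has an eigenvalue $\lambda\in\spec(D_0F)$ with $|\lambda|\neq 1$.

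First I would treat the case $|\lambda|<1$. Pick $\rho\in(0,1]$ with $|\lambda|<\rho$, so that $A_\rho^-$ is nonempty and the associated generalized eigenspace $L^-$ has positive dimension. By the Holomorphic Stable Manifold Theorem for Diffeomorphisms there is an $F$-stable manifold $W^-$ with $T_0W^-=L^-$; in particular $W^-$ is a germ of analytic variety of positive dimension through $0$, and for a representative on a neighborhood $U$ we have $F(U\cap W^-)\subset U\cap W^-$ with every positive orbit $(F^m(x))_{m\in\N}$ of a point $x\in U\cap W^-$ converging to $0$. Now take any $x\in U\cap W^-$ with $x\neq 0$ (which exists since $\dim W^->0$): the points $F^m(x)$ stay in $U$ for all $m\geq 0$, and since $F^m(x)\to 0$ with $F^m(x)\neq 0$ for all $m$ (as $F$ is injective and $x\neq 0=F(0)$), the positive orbit $O_{F,U}^+(x)$ is an infinite set. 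Shrinking $U$ if necessary so that $U$ is contained in a neighborhood where $F$ has the finite orbits property, this contradicts $F\in\diff{<\infty}{n}$.

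For the case $|\lambda|>1$, I would apply the previous argument to $F^{-1}$: since $F\in\diff{<\infty}{n}$ implies $F^{-1}\in\diff{<\infty}{n}$ (the finite orbits property is symmetric in $F$ and $F^{-1}$, as $O_{F,A}=O_{F^{-1},A}$ by definition, or directly because $\diff{<\infty}{n}$ is stable under inversion — note $O^-_{F,A}(p)=O^+_{F^{-1},A}(p)$), and $\spec(D_0F^{-1})=\{\mu^{-1}:\mu\in\spec(D_0F)\}$ contains $\lambda^{-1}$ with $|\lambda^{-1}|<1$, the first case applied to $F^{-1}$ gives a contradiction. Hence no eigenvalue can have modulus different from $1$, i.e. $|\lambda|=1$ for all $\lambda\in\spec(D_0F)$.

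I do not expect a serious obstacle here; the only point requiring a little care is checking that the positive orbit inside $U\cap W^-$ is genuinely infinite, which amounts to observing that $x\neq 0$ forces $F^m(x)\neq 0$ for every $m$ (injectivity of the representative) together with $F^m(x)\to 0$, so the orbit accumulates at a point not in it and therefore cannot be finite. The compatibility of the neighborhood from the Stable Manifold Theorem with a neighborhood on which the finite orbits property holds is arranged simply by intersecting the two and restricting.
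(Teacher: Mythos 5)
Your argument is correct and is essentially the same as the paper's: reduce to the case $|\lambda|<1$ by passing to $F^{-1}$, invoke the Holomorphic Stable Manifold Theorem to produce a positive-dimensional stable manifold $W^-$, and observe that points of $W^-$ near $0$ have infinite orbits, contradicting the finite orbits property. The paper states this more tersely (using $A_1^-$ directly rather than a chosen $\rho$), but the substance and structure are identical.
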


\begin{proof} Suppose $F$ has a eigenvalue $\lambda$ such that $|\lambda|\neq 1.$ So, up to change $F$ by $F^{-1},$ we can suppose $|\lambda|<1.$ Thus $F$ admits a stable manifold $W^{-} \neq \{0\}$ invariant by $F$ and associated to $A_1^-.$ Hence, the points of $W^{-}$ close to $0$ have infinite orbits.\end{proof}

\begin{hsmtv*}[cf. { \cite{carrillo2014}}]
\label{thm_hsmtv}
	Let $X\in \mfx (\C^n,0)$ be a singular vector field and $\theta \in {\mathbb R}_{<0}$. Suppose  
	\[S_{\theta}^{-}:=\{\lambda\in \spec(D_0X);\ \mathrm{Re}(\lambda) \leq \theta\}\neq \emptyset\]
	and denote by  $L_{\theta}^{-} =\oplus_{\lambda \in S_{\theta}^{-}(X)} \ker (D_0 X - \lambda \mathrm{id})^{n}$  the direct sum of the generalized eigenspaces associated to the eigenvalues in $S_{\theta}^{-} (X).$  Then $X$ admits a unique germ of stable manifold $W_{\theta}^{-}$ whose tangent space at $0$ is $L_{\theta}^{-}$. 
\end{hsmtv*}


\section{Finite orbits and one parameter groups}
\label{sec:one_parameter}
In this section we show Theorem \ref{thm_flow_fin_orb}.
\begin{proof}
	Suppose  that $X$ has the finite orbits property. Applying the stable manifold theorem for $X$ we conclude that $\spec (D_0X)\subset i\R.$ The proof now follows by induction on $n.$
	
	If $n=1,$ then the finite orbits property is equivalent to periodicity and hence  
	$\spec (D_0X)=\{\lambda\}\subset 2\pi i\Q.$
	
	Now, suppose $n\geq 2$ and assume the theorem holds in dimension less than $n.$ 
	Set 
	\[A_{>0}=\{\lambda\in \spec(D_0X); \lambda\in i\R_{>0}\}\ \mbox{ and }\ A_{<0}=\{\lambda\in \spec(D_0X); \lambda\in i\R_{<0}\}.\]
	Suppose $A_{>0}\neq\emptyset$. Let us prove that $A_{>0}\subset 2\pi i \Q.$ 

Setting $\tilde{X}=iX,$ we obtain a vector field whose complex trajectories coincide with those of $X,$ both interpreted as sets. 
	The subset of $D_0 \tilde{X}$ consisting of eigenvalues with negative real part is $i A_{>0}$. Therefore, $\tilde{X}$ 
	admits a stable manifold $V_0$, which is invariant by $X$, such that $\mathrm{Spec} (D_0 X_0)= A_{>0},$ where $X_0= X_{|V_0}$. Since $X$ has the finite orbits property so does $X_0.$ 
	If $A_{>0}\neq \spec(D_0X),$ then $\dim V_0<n$ and so by induction hypothesis we have $A_{>0}\subset 2\pi i \Q.$
	Therefore, we can suppose $A_{>0}=\spec(D_0X)$ and $V_0$ is an open subset of $\C^n$. Hence $X=X_0$ 
	is in the Poincar\'{e} domain. If $X$ has no   resonances then $X$ is analytically linearizable and we have $A_{>0}\subset 2\pi i\Q$ by Corollary \ref{cor.of.sse.r.campo}. Suppose, then, that $X$ 
	admits resonances. It follows that $k:= \sharp   (\spec(D_0 X_0))$ is greater than
	$1$.   Set $\spec( D_0 X_0)=\{\lambda_1,...,\lambda_k\}$ with $\im(\lambda_1)>\cdots>\im(\lambda_k).$
	By applying the  Stable Manifold Theorem  to $i X$ and 
$\theta = i \lambda_1$, $\theta = i \lambda_2, \hdots, \theta = i \lambda_k$, 
we find invariant manifolds $V_1, V_2, \hdots, V_{k-1}, V_k = V_0$ such that 
$\spec (D_0 X_j) = \{\lambda_1, \hdots, \lambda_j \}$ where $X_j : =  X|_{V_j}$ 
for all $1\leq j\leq  k.$
Consequently, $X_1$  has no resonances and so is analytically linearizable. It follows from Corollary \ref{cor.of.sse.r.campo} that $\lambda_1\in 2\pi i\Q.$

Suppose that $\lambda_1,...,\lambda_l\in 2\pi i \Q,\ l<k.$ We are going to show that 
$\lambda_{l+1}\in 2\pi i \Q$. Since $\im(\lambda_1)>\cdots>\im(\lambda_k),$ it follows that $X_{l+1}$ is linearizable (in which case $\lambda_1,....,\lambda_{l+1}\in 2\pi i \Q$) or the 
possible resonances of $X_{l+1}$ have the form
	\[\lambda_j=|M_{j+1}|\lambda_{j+1}+\cdots+|M_{l+1}|\lambda_{l+1}, \ \ \sum_{k=j+1}^{l+1}|M_k|\geq 2,\]
	where $M_j\in \Z_{\geq 0}^{n_j}$, and $n_j$ is the algebraic multiplicity of $\lambda_j$ for $j=1,...,l.$ By using the 
	Poincar\'{e}-Dulac normal form, we have coordinates $x=(x_1,...,x_{l+1})\in \C^{n_1}\times\cdots \times\C^{n_{l+1}}=\C^m$ 
	such that $X_{l+1}\sim Ax+(*,*,...,*,0),$ where the first spot corresponds to the $n_1$ first coordinates, the second spot to the next $n_2$ coordinates and so on. If there exists some resonance with $M_{l+1}\neq 0,$ then we can write $\lambda_{l+1}$ as a combination of $\lambda_1,...,\lambda_l$ with rationals coefficients, that is,  $\lambda_{l+1}\in 2\pi i\Q.$ If  all resonances satisfies $M_{l+1}=0,$ then $X_{l+1} \sim Ax+(*,*,...*,0,0)$ 
and we see that the manifold
$W=\{x_l=0\}\subset \C^m$ has dimension $n_1+n_2+...+n_{l-1}+n_{l+1},$ is invariant by $X_{l+1}$ and the  restriction $X|_W$ has eigenvalues $\lambda_1,...,\lambda_{l-1},\lambda_{l+1}.$ By hypothesis of induction, we see that $\lambda_1,...,\lambda_{l-1},\lambda_{l+1}\in 2\pi i\Q.$ This shows that $A_{>0}\subset 2\pi i \Q.$

Analogously, $A_{<0}$ is contained in $2\pi i \Q$ and hence $\spec(D_0X)\subset 2\pi i \Q$.
\end{proof}
\strut

The inverse proposition of the above theorem is not true, even in dimension one, 
as we can see in the next example.

\begin{example}
	\emph{The vector field $X(z)=z^2 \pz$ has spectrum $\spec(D_0X)=\{0\},$ but it does not have the finite orbits property, since $\exp(X)=z+z^2+O(z^3)$ is clearly non-periodic.}
\end{example}

\section{Fixed Point Curve Theorem}
\label{sec:fixed}
In this section we prove that if $F\in\diff{<\infty}{2}$ then 
some iterated $F^m$ admits a curve of fixed points at $0.$ 
First, we use constructions and ideas featuring in Mattei and Moussu \cite{mattei-moussu1980}, Rebelo and Reis  \cite{rebelo-reis2015a} and Pérez-Marco \cite{perez-marco1997} to show that there is a non-trivial continuum $K$ containing the origin and 
satisfying $F(K)=K$. 
The set $K$ consists of periodic points of $F$ and can be obtained as a limit of compact sets where we consider the Hausdorff topology on the  compact  subsets of $\overline{B}$. 
Finally, we will use the theory of semianalytic sets (see \cite{lojasiewcz1964}  \cite{bierstonemilman1988}) to show that the continuum $K$ is contained in an analytic curve which is invariant by some iterate of $F.$

\subsection{Continua}
For the sake of simplicity, we recall in this section
the Sierpi\'nski theorem and the Hausdorff topology on compact sets.

A topological space $X$ is called a \textit{continuum} if $X$ is both connected and compact. 
The next result will be a key ingredient in the description of the
connected components of the set of periodic points of a finite orbits local biholomorphism.
\begin{sierpinski*}[see {\cite[p.358]{engelking1989}}]
	Let $X$ be a continuum that has a countable cover $\{X_j\}_{j=1}^{\infty}$ by pairwise disjoint closed subsets. Then at most one of the sets $X_j$ is non-empty.
\end{sierpinski*}

Now, we define the Hausdorff topology.
Let $(M,d)$ be a metric space and denote by $H(M)$ the space of bounded, non-empty closed subsets  of $M.$  Note that $H(M)$ is the set of  compact subsets of $M$ if $M$ is compact. 
We define the Hausdorff metric $\rho:H(M)\times H(M)\to[0,\infty)$ by
\[\rho(A,B)=\max\{\sup_{x\in A}d(x,B),\ \sup_{y\in B}d(A,y)\}.\]
Consider $A\subset M$ and $\varepsilon>0$. We
define the $\varepsilon$-neighborhood of $A$ by
\[V_\varepsilon(A)=\bigcup_{x\in A}B_\varepsilon(x), \]
where $B_{\epsilon} (x)= \{y \in M; d(y,x) < \epsilon\}$.
The Hausdorff  metric satisfies
\[\rho(A,B)=\inf\{\varepsilon>0;\ V_\varepsilon(A)\supset B\ \mbox{and}\ V_\varepsilon(B)\supset A\}\]
for all $A,B \in H(M)$.
Moreover, the metric space  $(H(M),\rho)$ is compact if $M$ is compact
(cf. \cite[p. 280]{munkres2000}, \cite{henrikson1999}).
Note that a compact subset $K$ of $M$ is connected if and only if any pair of points $p,q \in K$
can be joined by an $\epsilon$-chain of points in $K$ 
for any $\epsilon >0$ \cite[p. 81]{Newman1992}. 
This criterium can be used to show that the subset  of $H(M)$ consisting of continua is 
compact if $M$ is compact.

\strut


\subsection{Invariant curves}
 A formal curve at $0\in \C^2$ is a proper radical ideal $\Gamma=(f)$ of $\C[[x, y]].$
Such a condition is equivalent to $f$ being reduced, i.e. $f$ has no multiple irreducible factors. 
\begin{enumerate}
	\item[(i)] We say that $\Gamma$ is invariant by $F\in \fdiff{}{2}$ if  $\Gamma\circ F=\Gamma$, i.e., $f$ divides $f\circ F.$
	\item[(ii)] We say that $\Gamma$ is a fixed point curve of $F\in \fdiff{}{2}$ if  $f$ divides $x\circ(F-\id)$ and $y\circ(F-\id)$.
	\item[(iii)] We say that $\Gamma$ is invariant by $X\in \fvf{}{2}$ if $X(\Gamma)\subset \Gamma,$ i.e., $f$ divides $X(f).$ 
	\item[(iv)] We say that $\Gamma$ is a singular curve of $X$ if $f$ divides $X$.
\end{enumerate}
In the case where the curve $\Gamma=(f)$ is a radical ideal of $\C \{ x, y \}$,
we identify it with the germ of analytic set 
$V_\Gamma=(f=0)$ and the conditions above coincide with the natural ones
for $F \in \diff{}{2}$ and $X \in \vf{}{2}$:
\begin{enumerate}
	\item[(i)] We have the equality $F(V_\Gamma)=V_\Gamma$ 
	of germs of analytic sets at $0.$
	\item[(ii)] $F|_{V_\Gamma}=\id.$ 
	\item[(iii)] $X$ is tangent to $V_\Gamma$ at any of its regular points.
	\item[(iv)] $X|_{V_\Gamma}=0.$ 
\end{enumerate}


\begin{lemma}[cf. \cite{ribon2005}]\label{lema.curva.fixa.sing.}
	Let $X\in \fvf{N}{2}$ and let $\Gamma$ be a formal curve at $0.$
	\begin{enumerate}
		\item[(a)] $\Gamma$ is invariant by $X$ iff $\Gamma$ is invariant by $\exp(X).$
		\item[(b)] $\Gamma$ is singular curve of $X$ iff $\Gamma$ is a fixed point curve of 
		$\exp(X)$.
\end{enumerate} \end{lemma}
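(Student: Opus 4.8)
The plan is to prove Lemma \ref{lema.curva.fixa.sing.} by reducing everything to the formal power series ring and exploiting the bijection $\exp \colon \hat{\mfx}_N(\C^n,0) \to \widehat{\mathrm{Diff}}_u(\C^n,0)$ together with the Taylor expansion $f \circ \exp(X) = f + \sum_{j\geq 1} \frac{1}{j!} X^j(f)$. For part (a), write $\Gamma = (f)$ with $f$ reduced. If $\Gamma$ is $X$-invariant, then $f \mid X(f)$, say $X(f) = g f$; an easy induction gives $X^j(f) \in (f)$ for all $j \geq 1$ (since $X^{j}(f) = X(X^{j-1}(f))$ and $X$ is a derivation, so $X((h f)) = X(h) f + h X(f) \in (f)$ whenever $h f \in (f)$). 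Hence $f \circ \exp(X) = f + \sum_{j\geq 1}\frac{1}{j!}X^j(f) \in (f)$, i.e. $f \mid f\circ \exp(X)$, which is exactly invariance by $\exp(X)$. For the converse, I would use that $\exp$ is a bijection onto $\widehat{\mathrm{Diff}}_u(\C^2,0)$: if $\Gamma$ is invariant by $F := \exp(X)$, then it is invariant by $F^{-1} = \exp(-X)$ and by all iterates $F^m = \exp(mX)$, $m \in \Z$; differentiating the family $t \mapsto f \circ \exp(tX)$ at $t = 0$ — or more algebraically, extracting the degree-one term in $t$ from $f \circ \exp(tX) \equiv 0 \bmod (f)$ for the formal parameter $t$ — yields $X(f) \in (f)$. (One must be slightly careful: invariance by $\exp(tX)$ for a single $t=1$ does not obviously give it for all $t$; the clean route is to note that $f \mid f \circ \exp(X)$ forces $f \mid f \circ \exp(X)^m = f \circ \exp(mX)$ for all $m \in \N$, and then compare with $f\circ\exp(mX) = f + m X(f) + \frac{m^2}{2}X^2(f) + \cdots$; since $f \mid f\circ\exp(mX) - f$ for all $m$, a Vandermonde/polynomial-in-$m$ argument on the coefficients, which live in the ring $\hat{\mathcal O}_{2,0}/(f)$, isolates $X(f) \equiv 0$.)

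For part (b), the argument is entirely parallel but with $(f)$ replaced by the statement that $f$ divides each component of the vector field or of $F - \id$. Suppose $\Gamma = (f)$ is a singular curve of $X$, i.e. $f \mid X$, meaning $f$ divides $X(x)$ and $X(y)$ (the coefficients of $X = X(x)\px + X(y)\py$). Then $X = f Y$ for some formal vector field $Y$, and $X^j(x) = (fY)^j(x)$; since $X$ maps $(f)$ into $(f)$ — indeed $X(f) = f Y(f) \in (f^2) \subset (f)$ — one gets inductively $X^j(x) \in (f)$ for $j \geq 1$, hence $x \circ (\exp(X) - \id) = \sum_{j \geq 1}\frac{1}{j!}X^j(x) \in (f)$, and likewise for $y$. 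This is precisely the condition that $\Gamma$ is a fixed point curve of $\exp(X)$. Conversely, if $\Gamma$ is a fixed point curve of $F = \exp(X)$, apply the same reasoning to the iterates: $F^m = \exp(mX)$ is also tangent to the identity along $\Gamma$ for every $m$ — actually I should check this; since $\mathrm{Fix}(F) \supseteq V_\Gamma$ formally implies $\mathrm{Fix}(F^m) \supseteq V_\Gamma$ — and $x \circ (F^m - \id) = m X(x) + \frac{m^2}{2}X^2(x) + \cdots \equiv 0 \bmod (f)$ for all $m$, so the polynomial-in-$m$ argument gives $X(x) \in (f)$ and $X(y) \in (f)$, i.e. $f \mid X$.

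The main obstacle, and the only genuinely non-routine point, is the converse directions: passing from ``$f$ divides $f \circ \exp(X)$'' (a single equation) to ``$f$ divides $X(f)$'' (which is the $t$-linear germ of a one-parameter statement). The tension is that a priori we are only given information about the time-one map, not the whole flow. The resolution I favor is the iteration trick combined with a Vandermonde-type argument: knowing $f \circ \exp(mX) \equiv f \pmod{(f)}$ for \emph{all} $m \in \Z$ (which does follow from the single case, by composing), and writing each side as a power series in $m$ with coefficients $\frac{1}{j!}X^j(f) \bmod (f)$, finitely many values of $m$ suffice to conclude all these coefficients vanish in $\hat{\mathcal O}_{2,0}/(f)$ — in particular the $j=1$ coefficient $X(f)$. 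Alternatively one can invoke that $\exp$ is a bijection on unipotent formal diffeomorphisms (the Proposition of \'Ecalle--Martinet--Ramis quoted above) so that $X$ is literally recovered from $F$, and then the formal vector field tangent to an invariant/fixed curve is intrinsic; but the hands-on Vandermonde argument is cleaner to write out and avoids appealing to uniqueness of the infinitesimal generator. Either way the computation is short, and I would present the $t$-parameter version: run $f \circ \exp(tX) \bmod (f)$ as an identity in $\hat{\mathcal O}_{2,0}/(f)[[t]]$, observe it is constantly $\bar f = 0$ for $t \in \Z$ hence identically zero, and read off the coefficient of $t$.
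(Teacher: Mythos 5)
The paper does not prove this lemma; it simply cites \cite{ribon2005}, so there is no in-paper argument to compare against. Your self-contained proof is correct. The forward implications (invariant curve of $X$ $\Rightarrow$ invariant curve of $\exp(X)$, singular curve of $X$ $\Rightarrow$ fixed point curve of $\exp(X)$) are exactly the right inductions on $X^j(f)\in(f)$, respectively $X^j(x),X^j(y)\in(f)$. For the converses, the iteration-plus-Vandermonde idea is sound and is in fact the standard device for passing from a single time-one map to the infinitesimal generator; you correctly flagged the subtle point that invariance for $t=1$ alone is not a priori enough and that one should use all integer times. Two details are worth spelling out if this were to be written up. First, the convergence of $\sum_{j\geq 1}\frac{m^j}{j!}X^j(f)$ and the Vandermonde extraction must be carried out modulo $(f)+{\mathfrak m}^N$ for each $N$: nilpotency of $D_0X$ guarantees a finite truncation index $J_N$, Vandermonde then gives $X^j(f)\in(f)+{\mathfrak m}^N$, and one concludes $X^j(f)\in(f)$ by the Krull intersection theorem applied to $\hat{\mathcal O}_{2,0}/(f)$. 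Second, in part (b) you need that a fixed point curve of $F$ is also a fixed point curve of every $F^m$: this follows by writing $x\circ F^m-x=\sum_{k=0}^{m-1}(x\circ F-x)\circ F^k$ and noting that $\Gamma$ being a fixed point curve of $F$ implies $\Gamma$ is $F$-invariant (so $f\circ F^k\in(f)$), hence each summand lies in $(f)$. As you note, one could instead invoke uniqueness of the infinitesimal generator of a unipotent formal diffeomorphism to avoid Vandermonde, but your direct computation is equally clean and more elementary.
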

\strut

\begin{lemma}\label{lema.da.unip.com.cur.inv}
	Let $V,W\subset (\C^2,0)$ be different germs of non-trivial analytic sets. 
	Consider $\psi\in \diff{}{2}$ such that  $V,W\subset \mathrm{Fix}(\psi)$.
	Then  $\psi$ is tangent to identity.	
\end{lemma}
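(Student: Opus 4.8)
The plan is to convert the hypothesis into a divisibility statement in $\C\{x,y\}$ and then simply read off the $1$-jet of $\psi$. Write $\psi=(\psi_1,\psi_2)$ in the coordinates $(x,y)$, and let $h\in\C\{x,y\}$ be a reduced equation of the germ $V\cup W$, i.e. a generator of the ideal $I(V\cup W)=I(V)\cap I(W)$ (this ideal is principal, $h$ being the squarefree product of the irreducible equations of the branches of $V$ and of $W$). Since $\psi$ restricts to the identity on $V$ and on $W$, the germs $\psi_1-x$ and $\psi_2-y$ vanish on $V\cup W$, hence lie in $(h)$; that is, there are $a,b\in\C\{x,y\}$ with
\[ \psi_1-x=h\,a, \qquad \psi_2-y=h\,b . \]

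The second step is to bound $\ordem_0(h)$ from below using only $V\neq W$. We may assume $V$ and $W$ are germs of curves: they are non-trivial, hence of positive dimension, and if one of them were all of $(\C^{2},0)$ then $\psi=\id$ and there is nothing to prove. Decomposing into irreducible branches, the set of branches of $V\cup W$ is the union of the branches of $V$ and those of $W$; were it a single branch $B$, we would get $V=B=W$, contradicting $V\neq W$. Thus $V\cup W$ has at least two distinct branches, and since $\ordem_0$ is additive on products and every branch has multiplicity $\geq 1$, we conclude $\ordem_0(h)\geq 2$. Feeding this back, $\ordem_0(\psi_1-x)\geq 2$ and $\ordem_0(\psi_2-y)\geq 2$, so $\psi_1-x$ and $\psi_2-y$ carry no terms of degree $\leq 1$; equivalently $D_0\psi=\id$, which is precisely the assertion that $\psi$ is tangent to the identity.

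The one point that needs a little care — and the reason to argue with the reduced equation of $V\cup W$ rather than only with tangent cones — is the case in which every branch of $V$ and of $W$ is tangent to one and the same line at $0$ (for instance $V=\{y=0\}$, $W=\{y=x^{2}\}$). A Puiseux parametrization argument shows only that the tangent direction of each branch of $\mathrm{Fix}(\psi)$ lies in $\ker(D_0\psi-\id)$, which by itself forces $D_0\psi=\id$ solely when two branches have linearly independent tangent directions. The estimate $\ordem_0(h)\geq 2$ above is exactly what bridges this gap, and it is completely elementary, relying on nothing beyond $V\neq W$ and the fact that $\C\{x,y\}$ is a unique factorization domain. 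So I do not expect a genuine obstacle here; the proof should be short, with this multiplicity bookkeeping being the only substantive ingredient.
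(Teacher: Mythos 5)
Your proof is correct and follows essentially the same route as the paper's: a reduced defining equation of the fixed locus divides $x\circ\psi-x$ and $y\circ\psi-y$, and an order-$\geq 2$ bound on that equation kills the linear terms. The only difference is cosmetic — the paper case-splits on $\ordem(f)$ and $\ordem(g)$ (and invokes coprimality of $f,g$ when both have order $1$), whereas you work directly with a single reduced equation $h$ of $V\cup W$ and observe $\ordem(h)\geq 2$ because $V\cup W$ has at least two branches.
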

\begin{proof}
	We have $V\neq \{0\}\neq W$ and  $V \neq W$ by hypothesis.
	If $V=(\C^2,0)$ or $W=(\C^2,0)$ then $\psi=id.$ Therefore, we can suppose that $V$ and $W$ are analytic curves with reduced  equation $f=0$ and $g=0,$ respectively, at $0$. 
	So both $f$ and $g$ divide $x\circ \psi-x$ and $y\circ \psi-y.$ If $\ordem(f)\geq 2$ or $\ordem(g)\geq 2,$ then the first jet $J^1\psi$ of $\psi$ is equal to $Id.$ So we can assume
	$\ordem(f)=\ordem(g)=1$. Since $V \neq W$, we deduce
	$fg|x\circ\psi-x$ and $fg|y\circ\psi-y.$ Again, we obtain $J^1\psi=Id.$ This concludes the proof.
\end{proof}

\subsection{Connected components of the set of periodic points}

Let us assume that $F\in\diff{<\infty}{2},$ $U$ is a neighborhood of $0$ in which $F$ is defined and injective and fix  a closed ball $\overline{B}_{\rho} (0)$ such that 
	$ \overline{B}_{\rho} (0) \subset U.$ Let us also set
\begin{equation}
\label{equ:perk}
  \mathrm{Per}_k (F) 
:=\{p\in  {B}_{\rho} (0);\ F(p),...,F^{k-1}(p)\in   {B}_{\rho} (0) \ \mbox{and}\ F^{k}(p)=p\} 
\end{equation}
and
\[  \overline{\mathrm{Per}}_k (F)
	:=\{p\in  \overline{B}_{\rho} (0);\ F(p),...,F^{k-1}(p)\in   \overline{B}_{\rho} (0) \ \mbox{and}\ F^{k}(p)=p\}    \]
for $k \in {\mathbb N}$ and 
\begin{equation}
\label{equ:per}
   \mathrm{Per}  (F)=~\bigcup_{k\in \N}   \mathrm{Per}_k (F),  \ \ 
	\overline{\mathrm{Per}}  (F)=~\bigcup_{k\in \N}   \overline{\mathrm{Per}}_k (F)  .  
\end{equation}

\begin{lemma}\label{lema.do.compacto.n=2} \label{lemma_exist.of.comp.}
	There is a subset $K \subset  \overline{\mathrm{Per}}  (F)$ with the following properties.
	\begin{enumerate}
		\item $K$ is a continuum;
		\item $0\in K$ and $K\cap \partial    {B}_{\rho} (0)  \neq \emptyset;$
		\item $F(K)=K$
	\end{enumerate}
\end{lemma}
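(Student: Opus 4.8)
The plan is to produce the continuum $K$ as a Hausdorff limit of carefully chosen compact, connected, $F$-invariant pieces of the sets $\overline{\mathrm{Per}}_k(F)$, imitating the Mattei--Moussu/Rebelo--Reis/P\'erez-Marco argument. First I would fix a sequence of radii $\rho_j \downarrow 0$ with $\rho_j < \rho$ and look, for each $j$, at the point set of periodic points of $F$ that meets both a neighborhood of $0$ and the sphere $\partial B_{\rho_j}(0)$. The finite orbits property in $U$ guarantees that every point of $\overline{B}_\rho(0)$ lies on a periodic orbit of $F$ of some period (for a closed ball this uses that a point with finite orbit all of whose iterates stay in the compact ball must be periodic, as in the proof of Proposition \ref{propo_equiv_of_fin.orb}(ii)), so $\overline{\mathrm{Per}}(F) = \overline{B}_\rho(0)$ up to the boundary-escape issue; in particular there are periodic points arbitrarily close to $0$.

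The core step is a connectedness argument. For a fixed small $\epsilon>0$ I want a connected compact subset of $\overline{\mathrm{Per}}(F)$ joining $0$ to $\partial B_{\rho'}(0)$ for some intermediate radius $\rho'$. Here is where Sierpi\'nski's theorem enters: consider the continuum $\overline{B}_{\rho'}(0)$ (or rather an appropriate compact connected subset of $\overline{\mathrm{Per}}(F)$) and its partition by period. Writing $\overline{\mathrm{Per}}(F) = \bigcup_{k\in\N} \overline{\mathrm{Per}}_k(F)$ as a countable union of closed sets, if the connected component $C$ of $0$ in $\overline{\mathrm{Per}}(F)$ were contained in no single $\overline{\mathrm{Per}}_k(F)$, one would want to write $C$ as a countable union of pairwise disjoint closed sets (the ``new'' periodic points appearing at each period, e.g. $\overline{\mathrm{Per}}_{k!}(F) \setminus \overline{\mathrm{Per}}_{(k-1)!}(F)$ suitably closed up) and derive a contradiction with Sierpi\'nski. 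The delicate point is that these ``level sets'' are not obviously closed, so one must instead run the Sierpi\'nski argument on a nested-limit construction: take compact connected sets $K_j \subset \overline{\mathrm{Per}}_{k_j}(F)$ (for a rapidly growing sequence $k_j$, e.g. $k_j = j!$) each containing $0$ and reaching out to radius $\rho_j$, pass to a Hausdorff limit $K$ along a subsequence using compactness of the space of continua in $\overline{B}_\rho(0)$, and show the limit is a continuum containing $0$ and meeting $\partial B_\rho(0)$.

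To get the invariance $F(K)=K$, I would build the $K_j$ already invariant, or more robustly use that $F$ is a homeomorphism on a neighborhood of $\overline{B}_\rho(0)$, hence acts continuously on the Hausdorff metric space of compact subsets; since each $K_j$ (or rather $F(K_j)$ and $K_j$) can be arranged to have controlled Hausdorff distance, invariance passes to the limit. Concretely: a periodic point $p$ of period $k$ has its whole orbit in $\overline{\mathrm{Per}}_k(F)$ provided the orbit stays in $\overline B_\rho(0)$; by shrinking to an inner ball $\overline B_{\rho'}(0)$ on which $F^{\pm 1}(\overline B_{\rho'}(0)) \subset \overline B_\rho(0)$ one controls the escape, and then $F$ genuinely permutes the relevant periodic sets, so $F(K_j)$ and $K_j$ agree, giving $F(K)=K$ in the limit. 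That $K \ni 0$ is automatic since $0$ is fixed hence in every $\overline{\mathrm{Per}}_k(F)$, and reaching the boundary is forced by choosing each $K_j$ to intersect $\partial B_{\rho_j}(0)$ with $\rho_j \to \rho$.

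The main obstacle I anticipate is precisely establishing that the pieces $K_j$ can be taken \emph{connected} while still reaching from $0$ out to a definite radius: a priori the component of $0$ inside $\overline{\mathrm{Per}}_{k}(F)$ could be just $\{0\}$ for every single $k$, with the "spreading out" only happening in the limit. Overcoming this is where Sierpi\'nski's theorem does the real work — it rules out the pathological scenario in which the component of $0$ in the full periodic set $\overline{\mathrm{Per}}(F)$ is a nontrivial continuum that is a countable disjoint union of closed period-strata, thereby forcing that continuum to live in a single stratum $\overline{\mathrm{Per}}_m(F)$; combined with the fact (from the closed-ball finite-orbits hypothesis) that this component cannot be just $\{0\}$ because there are periodic points accumulating at $0$ and a local connectedness/chain argument links them up, one extracts the desired $K$. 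The remaining verifications (semianalyticity, the upgrade to an analytic curve in the open ball) are deferred to later subsections via \L{}ojasiewicz's theory of semianalytic sets and are not needed for this lemma.
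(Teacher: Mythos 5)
Your proposal does not reproduce the paper's argument and has a genuine gap at the most critical step. You recognize the difficulty yourself at the end — that the component of $0$ inside a single $\overline{\mathrm{Per}}_k(F)$ could a priori be $\{0\}$ — but your proposed resolution (``there are periodic points accumulating at $0$ and a local connectedness/chain argument links them up'') does not work. First, it is not established that periodic points of $F$ accumulate at $0$: a point $p$ close to $0$ has a finite orbit in $\overline{B}_\rho(0)$, but that finiteness may be because the orbit escapes the ball, not because $p$ is periodic. Your earlier claim that $\overline{\mathrm{Per}}(F)=\overline{B}_\rho(0)$ ``up to the boundary-escape issue'' is simply false; in the model example $F(x,y)=(x,y+x^2)$, the periodic set is exactly $\{x=0\}$, a thin analytic curve. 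Second, even if periodic points did accumulate at $0$, accumulation does not produce a continuum through $0$: a totally disconnected set can accumulate at a point. Sierpi\'nski's theorem cannot create a non-trivial continuum out of nothing — it only constrains a continuum once you already have one — so it cannot be used to bootstrap the existence of $K$. In the paper, Sierpi\'nski is invoked later (in the proof of Theorem~\ref{thm:structure}/Lemma~\ref{teo_stab_of_cla}), precisely \emph{after} this lemma has supplied the non-trivial continuum, to show the component sits inside a single stratum $\overline{\mathrm{Per}}_m(F)$.

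The paper's actual construction uses a perturbation trick (in the spirit of P\'erez-Marco) that your proposal is missing entirely. Since $\mathrm{spec}(D_0F)\subset S^1$, each $F_n:=e^{-1/n}F$ has all eigenvalues of modulus $<1$, so $0$ is an attractor for $F_n$. One picks a small closed ball $\overline{B}_{r_n}$ in the basin of attraction with all forward $F_n$-iterates inside $\overline{B}$, then iterates it \emph{backward} until the $k_n$-th preimage first touches $\partial B$ (such $k_n$ exists because $D_0F_n^{-1}$ is expanding, so the sequence of differentials cannot stay bounded). The set $V_n=\overline{\cup_{j\geq -k_n}F_n^j(\overline{B}_{r_n})}$ is then a continuum containing $0$, touching $\partial B$, and satisfying $F_n(V_n)\subset V_n$, all for free from the attractor structure — no appeal to periodicity or to the structure of $\overline{\mathrm{Per}}_k(F)$ is needed at this stage. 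Letting $n\to\infty$ and passing to a Hausdorff subsequential limit $K$ preserves connectedness, compactness, the boundary contact, and gives $F(K)\subset K$ because $F_n\to F$ uniformly. Only \emph{then} does the finite orbits property intervene: every point of $K$ has its whole forward $F$-orbit trapped in $K\subset\overline{B}$, hence by finiteness and injectivity is periodic, so $K\subset\overline{\mathrm{Per}}(F)$, and $F(K)=K$ follows. This route avoids the chicken-and-egg problem that your proposal runs into, and it is the step you would need to supply to make the argument work.
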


\begin{proof}
	We have $\spec (D_0F)\subset S^1$ as a consequence of   the Stable Manifold Theorem.  For each $n\in \N,$ set $F_n=e^{-1/n}F.$  Thus,  $F_n$ is a biholomorphism with $\spec (D_0F_n)\subset \{\lambda\in \C;\ |\lambda|<1\}.$
	
	\strut
	
	\textbf{Claim.} 
	 We denote $B_r = B_r (0)$ and $\overline{B}_r = \overline{B}_r (0)$ for $r \in \R^{+}$
		and $B= B_{\rho}$, $\overline{B}= \overline{B}_{\rho}$. 
	There are sequences $(k_n)$ of positive integer numbers and $(r_n)$ of positive real numbers 
	such that:
	\begin{enumerate}
		\item $\lim_{n \to \infty} r_n=0$ and the closed ball 
		$\overline{B}_{r_n}$ satisfies 
		\[\cup_{j=1}^{\infty} F_{n}^{j}(\overline{B}_{r_n}) \subset \overline{B};\]
		\item $\overline{B}_{r_n},F_n^{-1}(\overline{B}_{r_n}),...,\ F_n^{-k_n}(\overline{B}_{r_n})\subset \overline{B}$ and $F_n^{-k_n}(\overline{B}_{r_n})\cap \partial B \neq \emptyset.$
	\end{enumerate}
	
	Let us assume this for a moment to prove the lemma. We define 
	\[V_n = \overline{\cup_{j \geq -k_n} F_{n}^{j}(\overline{B}_{r_n})}.\]
	Then $V_n$ is connected since it is the closure of a union of connected sets that have the origin as a common point. Thus, $V_n$ is a continuum contained in $\overline{B}$ such that 
	$F_{n}^{j} (V_n) \subset V_n$ for all $j \geq 0$ and there exists $p_n\in V_n\cap\partial B.$
	Passing to a subsequence if necessary, we can assume that $V_n\to K,$ in the Hausdorff topology of compact subsets of $\overline{B},$ and also $p_n\to p\in K\cap \partial B.$ In particular, $K$ is a continuum containing the origin such that $K\cap \partial B\neq \emptyset$. Since $F_{n}(V_n) \subset V_n$ for every $n \in {\mathbb N},$ $(F_n)_{n \geq 1}$ converges to $F$ uniformly in $\overline{B}$ and $V_n \to K,$ we deduce that  $F(K) \subset K.$ Therefore, since $F$ has the finite orbits property, we obtain $K \subset  \overline{\mathrm{Per}}  (F)$.
	In particular, $K$ is contained in the image of $F|_K$ and  hence $F(K)=K$.
	
	\strut
	
	\textbf{Proof of the claim.} 
	
	Let us construct $0 < r_n < 1/n$ and $k_n$. Since the origin is an attractor for $F_n$,
	there exists $R \in (0, 1/n)$ such that the closed ball $\overline{B}_{R}$ is contained in the
	basin of attraction of $0$ and satisfies 
	$\cup_{j=1}^{\infty} F_n^{j}(\overline{B}_{R}) \subset \overline{B}$.
	We claim that there exists  $k_n \in {\mathbb N}$ such that
	\[ F_n^{-1} (\overline{B}_{R}) \cup \hdots \cup F_n^{- (k_{n}-1)} (\overline{B}_{R}) 
	\subset \overline{B}  \ \ \mathrm{and} \ \ 
	F_n^{-k_n} (\overline{B}_{R}) \setminus \overline{B} \neq \emptyset . \]
	Assume, aiming at contradiction, that no such $k_n$ exists. 
	Denote $A = D_0 F_n^{-1}$ and $A^{k} = (a_{ij;k})_{1 \leq i,j \leq 2}$ for $k \in {\mathbb Z}$. 
	We have 
	\[ a_{11;k} = \frac{1}{(2 \pi i)^{2}} \int_{|x| = |y| = \frac{R}{2}} \frac{x \circ F_{n}^{-k}}{x^2 y} dx dy 
	\implies |a_{11;k}| \leq \frac{1}{(2 \pi)^{2}} (\pi R)^{2}  \rho {\left(\frac{2}{R} \right)}^{3}  
	= \frac{2 \rho}{R} \]
	for any $k \in {\mathbb N}$. Analogously, we obtain 
	$ |a_{12;k}| \leq 2 \rho / R$, $ |a_{21;k}| \leq 2 \rho / R$ and $ |a_{22;k}| \leq 2 \rho / R$ 
	for any $k \in {\mathbb N}$. We proved that the sequence $(A^{k})_{k \geq 1}$ is bounded, 
	contradicting $\mathrm{spec} (A) \subset \{ z \in {\mathbb C}; |z| >1 \}$.
	
	By defining 
	\[ r_n = \inf \{ s \in (0, R);  F_n^{-k_n}  (\overline{B}_s) \setminus \overline{B} \neq \emptyset \}, \]
	we obtain $k_n$ and $r_n$ satisfying the desired properties.
	\end{proof}
\strut

\begin{defi}
	\emph{Let $M$ be a real analytic manifold. A subset $X$ of $M$ is semianalytic if each $p\in M$ has a neighborhood $V$ such that $X\cap V$ has the form
		\[X\cap V=\bigcup_{i=1}^{m}\bigcap_{j=1}^nX_{ij},\]
		where $X_{ij}=\{f_{ij}=0\}$ or $X_{ij}=\{f_{ij}>0\}$ with $f_{ij}$ real analytic on $V.$}
\end{defi}

\begin{rem}
	\emph{Notice that every (real or complex) analytic set $X$ is semianalytic with $m=1$ and $X_{ij}=\{f_{ij}=0\}.$ Moreover, since  $\{f_{ij}<0\}=\{-f_{ij}>0\},$ $\{f_{ij}\geq 0\}=\{f_{ij}=0\}\cup\{f_{ij}>0\}$ and $\{f_{ij}\leq 0\}=\{f_{ij}=0\}\cup\{f_{ij}<0\},$ we can add these types of sets to the possible options for $X_{ij}$ to obtain an alternative definition.}
\end{rem}

\begin{example}\emph{
		For each $r>0$ the closed ball 
		$ \overline{B}_r(0) $ 
		is semi-analytic in $\C^n,$ since it can be written as 
		$ \overline{B}_r(0) =\{f\geq0\},$ where $f(x)~=~r^2-|x_1|^2-\cdots -|x_n|^2.$}
\end{example}

\begin{lemma}\label{lemma_ext_of_the_comp} Under the hypotheses above, 
	each  set $\overline{\mathrm{Per}}_k (F)$  is semianalytic in $U$, has finitely many 
	connected components, 
	and each  of its connected components is semianalytic and path-connected.
\end{lemma}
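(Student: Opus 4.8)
The plan is to analyze the set $\overline{\mathrm{Per}}_k(F)$ as the intersection of a semianalytic set coming from the ball conditions with the fixed-point locus of the holomorphic map $F^k$. First I would fix $k$ and let $U_k \subset U$ be the (open) set of points $p$ such that $F(p), \hdots, F^{k-1}(p)$ are all defined; then $F^k$ is holomorphic on $U_k$. Writing $F^k = (G_1, G_2)$ in coordinates, the condition $F^k(p) = p$ defines the complex analytic set $Z_k = \{G_1(x,y) = x,\ G_2(x,y) = y\}$, which is in particular a real analytic (hence semianalytic) subset of $U_k$. The remaining constraints $p \in \overline{B}_\rho(0)$ and $F^j(p) \in \overline{B}_\rho(0)$ for $1 \le j \le k-1$ are each of the form $\{h_j \ge 0\}$ where $h_j(p) = \rho^2 - \|F^j(p)\|^2$ is real analytic on $U_k$ (the closed ball being semianalytic, as noted in the Example above, and the preimage of a semianalytic set under a real analytic map is semianalytic). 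Hence
\[
\overline{\mathrm{Per}}_k(F) = Z_k \cap \{h_0 \ge 0\} \cap \{h_1 \ge 0\} \cap \hdots \cap \{h_{k-1} \ge 0\}
\]
is a finite intersection of semianalytic sets, therefore semianalytic in $U_k$, and one checks it is closed in $U$ since it is already closed in $\overline{B}_\rho(0)$ which is compact; I would phrase it as semianalytic in $U$ by extending the defining functions or by noting semianalyticity is a local property and near $\partial U_k$ the set is empty (points there leave $\overline B_\rho$).

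Next I would invoke the structure theory of semianalytic sets (Łojasiewicz; Bierstone–Milman, as cited) to get the statements about connected components. A semianalytic set is locally connected and, being relatively compact here (contained in $\overline B_\rho(0)$), it has finitely many connected components, each of which is again semianalytic and open-and-closed in the set. For path-connectedness I would use the fact that a semianalytic set admits a locally finite semianalytic stratification into connected real analytic manifolds, and that a connected semianalytic set is path-connected (indeed it can be triangulated as a finite simplicial complex, being compact); any two points lie in strata that are joined through a chain of adjacent strata, and within and across strata one builds an explicit path. Alternatively, one can cite directly that connected semianalytic sets are path-connected.

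The main obstacle I expect is bookkeeping around the domain: $F^k$ is only defined where all intermediate iterates stay in $U$, so one must be careful that $\overline{\mathrm{Per}}_k(F)$ is genuinely semianalytic as a subset of the ambient manifold $U$ and not merely of the a priori non-semianalytic open set $U_k$. I would resolve this by observing that if $p \in \overline{\mathrm{Per}}_k(F)$ then $p$ and all its iterates lie in the compact set $\overline B_\rho(0) \subset U$, so $p$ has a neighborhood $V_p \subset U_k$ in which the local description above is valid, which is exactly what semianalyticity requires; and a point of $U$ not in the closure of $U_k \cap \overline B_\rho$ has a neighborhood disjoint from $\overline{\mathrm{Per}}_k(F)$, so the set is trivially semianalytic there. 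Finiteness of the number of components then follows from relative compactness together with local connectedness of semianalytic sets. The rest — that each component is itself semianalytic (components of a semianalytic set are semianalytic) and path-connected — is a direct appeal to the cited references.
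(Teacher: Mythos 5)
Your proposal is correct and follows essentially the same route as the paper's proof: write $\overline{\mathrm{Per}}_k(F)$ locally as the intersection of the complex-analytic fixed-point locus of $F^k$ with the semianalytic ball constraints $\{r^2 - \|F^l(\cdot)\|^2 \ge 0\}$, then invoke Bierstone--Milman for semianalyticity and local finiteness of components, compactness for the finiteness count, and \L{}ojasiewicz triangulability for path-connectedness of each component. The only genuine difference is that you make explicit the bookkeeping about the domain $U_k$ where $F^k$ is defined (observing that every point of $\overline{\mathrm{Per}}_k(F)$ has a neighborhood inside $U_k$, and semianalyticity is local), which the paper leaves implicit by simply working in a small neighborhood of an arbitrary $p$; this is a welcome clarification but not a different method.
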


\begin{proof}
	 We denote ${\mathcal P}_k = \overline{\mathrm{Per}}_k (F)$. 
	Let $p\in  {\mathcal P}_k $. 
	The local biholomorphism $F^k$ is  well-defined  in some neighborhood $V$ of $p.$ 
	Moreover, we have 
	\[   {\mathcal P}_k  \cap V
	=\left(\cap_{j=1}^4\{f_j=0\}\right)\cap\left(\cap_{l=0}^{k-1}\{f\circ F^l\geq 0\}\right),\]
	where $f_1=\re(x\circ F^k-x),\ f_2=\im(x\circ F^k-x),\ f_3=\re(y\circ F^k-y),$ $f_4=\im(y\circ F^k-y),$ and $f(x,y)~=~r^2-|x|^2-|y|^2.$ Therefore,  ${\mathcal P}_k$  is semianalytic. Now, by Corollary 2.7 in \cite{bierstonemilman1988}, we know that each connected component of   ${\mathcal P}_k$  is also semianalytic and the family of connected components of ${\mathcal P}_k$ is locally finite. Since  ${\mathcal P}_k$  is compact, it has finitely many connected components. Finally, by using Theorem 1 in \cite{lojasiewcz1964}, we know that  
	${\mathcal P}_k$  is triangulable and so is locally path connected. Thus, each connected component of ${\mathcal P}_k$  is path connected.
\end{proof}




\begin{lemma}\label{lema.esp.const.} \label{lema.da.pre.estab.da.uniao}
	Let $C$ be a connected component of some $\overline{\mathrm{Per}_{kl}} (F)$ 
	and suppose that
	\[E=\{p\in C;\ F^k(p)=p\  \mbox{ and the germ }F^k_p \mbox{ of } F^{k}
	\mbox{ at } p \mbox{  is unipotent } \}\]
	is non-empty. 
	Then $C$ is a subset of $\overline{\mathrm{Per}_{k}} (F)$ 
	and $D_{p} F^{k}$ is unipotent for all $p \in C$.
\end{lemma}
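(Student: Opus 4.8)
The plan is to prove that $E$ is simultaneously closed and open in the connected set $C$ (it is connected, being a connected component of the path‑connected semianalytic set $\overline{\mathrm{Per}_{kl}}(F)$ of Lemma~\ref{lemma_ext_of_the_comp}); together with $E \neq \emptyset$ this forces $E = C$, which is exactly the assertion. Indeed, if $p \in C = E$ then $F^{kl}(p) = p$ with $p, F(p), \dots, F^{kl-1}(p) \in \overline{B}$, so the periodic orbit of $p$, which has period dividing $k$ once $F^{k}(p) = p$, sits inside $\overline{B}$; hence $p \in \overline{\mathrm{Per}_{k}}(F)$, and $D_p F^{k}$ is unipotent by the definition of $E$. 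Closedness of $E$ in $C$ is the easy half: if $q_n \in E$ and $q_n \to q \in C$, continuity of $F^{k}$ gives $F^{k}(q) = q$, and $D_{q_n} F^{k} \to D_q F^{k}$; since eigenvalues depend continuously on the matrix, $\mathrm{spec}(D_q F^{k}) = \{1\}$, so $q \in E$.

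For openness, fix $p \in E$ and a sufficiently small ball $V$ around $p$ on which $F^{k}$ and $F^{kl}$ are defined and injective. First I would write the germ $F^{k}_p = \exp(Y)$ with $Y$ a nilpotent (a priori only formal) vector field at $p$, using that the exponential is a bijection of $\hat{\mfx}_N(\C^2,0)$ onto $\widehat{\mathrm{Diff}}_{u}(\C^2,0)$; then $F^{kl}_p = \exp(lY)$ with $lY$ again nilpotent. If $F^{kl}_p = \mathrm{id}$ then $lY = 0$, hence $Y = 0$ and $F^{k}_p = \mathrm{id}$, and $C \cap V \subset \mathrm{Fix}(F^{k})$ with $DF^{k} \equiv \mathrm{id}$; so assume this is not the case. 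Since $Y$ and $lY$ have the same singular curves (scaling the components of a vector field by $l \neq 0$ alters no divisibility relation), Lemma~\ref{lema.curva.fixa.sing.}(b) applied to both shows that $F^{k}_p$ and $F^{kl}_p$ have exactly the same fixed point curves, so the divisorial (one‑dimensional) parts of the germs of $\mathrm{Fix}(F^{k})$ and $\mathrm{Fix}(F^{kl})$ at $p$ coincide; call this curve germ $\Gamma$, and note $\Gamma \subset \mathrm{Fix}(F^{k})$. Every point of $C$ is fixed by $F^{kl}$, so $C \cap V \subset \mathrm{Fix}(F^{kl})$; as $\mathrm{Fix}(F^{kl})$ is at most one‑dimensional near $p$, its points near $p$ distinct from $p$ lie on $\Gamma$, and, $C$ being connected, either $C \cap V = \{p\}$ (then $C = \{p\}$ and we are done) or $C \cap V \subset \Gamma$. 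In the second case $F^{k}$ is the identity on $C \cap V$.

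The remaining step is to show $D_q F^{k}$ is unipotent for every $q \in \Gamma$ near $p$; then $C \cap V \subset \Gamma$ gives $C \cap V \subset E$, proving openness. Since $\Gamma \subset \mathrm{Fix}(F^{k})$, at each smooth point $q$ of $\Gamma$ the eigenvalues of $D_q F^{k}$ are $1$ (the tangent direction of $\Gamma$) and $\mu(q) := \mathrm{tr}(D_q F^{k}) - 1$, and $\mu$ is holomorphic along $\Gamma$ near $p$ with $\mu(p) = 1$ because $p \in E$. I would show $|\mu| \equiv 1$ on $\Gamma$ near $p$: if $|\mu(q)| < 1$ somewhere, then the Stable Manifold Theorem for diffeomorphisms, applied to $F^{k}$ at the fixed point $q$ with $\rho = 1$, provides a nontrivial one‑dimensional $F^{k}$‑invariant curve through $q$ all of whose points tend to $q$ under iteration of $F^{k}$; for $q$ close enough to $p$ the $F$‑orbit of $q$ stays near the finite $F$‑orbit of $p$, hence inside $U$, and then so do the full $F$‑orbits of the points of that curve, which are therefore infinite — impossible since $F$ has finite orbits in $U$. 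The inequality $|\mu(q)| > 1$ is excluded symmetrically with $F^{-1}$. Thus on each branch of $\Gamma$ near $p$ the holomorphic function $\mu$ has constant modulus $1$, hence is constant, hence equals $\mu(p) = 1$; so $D_q F^{k}$ is unipotent all along $\Gamma$, and in particular on $C \cap V$.

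The step I expect to be the main obstacle is the reduction: passing from $p \in E$ to the normal form $F^{k}_p = \exp(Y)$ with $Y$ nilpotent and using Lemma~\ref{lema.curva.fixa.sing.} to see that $\mathrm{Fix}(F^{k})$ and $\mathrm{Fix}(F^{kl})$ have the same one‑dimensional part at $p$, thereby confining $C$ near $p$ to a fixed point curve $\Gamma$ of $F^{k}$. Once that is secured, propagating unipotence along $\Gamma$ via the Stable Manifold Theorem and the maximum modulus principle is comparatively routine. One must also attend to the minor technical points that the germ equalities become set equalities on a small enough $V$ and that the orbits used to invoke the finite orbits property really do remain in $U$; both follow from continuity together with the compactness of the (finite) orbit of $p$.
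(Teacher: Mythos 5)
Your proof is correct and follows essentially the same route as the paper's: both hinge on (i) passing to the nilpotent formal infinitesimal generator of $F^k_p$ and invoking Lemma~\ref{lema.curva.fixa.sing.} to identify the fixed-point curves of $F^k_p$ and $F^{kl}_p$ (hence showing $C$ near $p$ lies in $\mathrm{Fix}(F^k)$), and (ii) using the Stable Manifold Theorem together with the open-mapping / maximum-modulus principle to force the relevant spectral quantity to be constant equal to its value at $p$. The only cosmetic difference is that you exploit the fact that one eigenvalue is automatically $1$ along $\Gamma$ and track the other via $\mathrm{tr}-1$, whereas the paper treats $\det$ and $\mathrm{tr}$ as two separate circle/interval-valued holomorphic functions on $\mathrm{Fix}(F^k)$.
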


\begin{proof}
	Denote $\psi = F^{k}.$  We know that if $\psi(p)=p$  then the characteristic polynomial of $D_p\psi$ is \[P_{D_p\psi}(x)=x^2-\mathrm{tr}(D_p\psi)x+\det D_p\psi=x^2-Sx+P,\] where $S$ is the sum and $P$ is the product of the eigenvalues of $D_p\psi.$ Therefore,
	\[E= \{ p \in C ;\ \psi(p)=p, \ \mathrm{tr}(D_p \psi)=2 \  \mathrm{ and }\det(D_ p \psi) = 1 \}.\]
	In order to prove the lemma it suffices to show that $E=C.$ Since $E\neq\emptyset,$ the set $C$ is connected,	 and $E$ is closed in $C,$ it suffices to show that $E$ is open in $C.$ Consider $p\in E .$ Let us first  prove that  
	the germ $(C,p)$ of $C$ at $p$ is contained in $\overline{\mathrm{Per}_{k}} (F)$. 
	Set
	\[A = \{ q \in U; \psi (q) = q \}\ \ \ \mathrm{ and }\ \ \ 
	B=\{q\in U;\ \psi^{l}(q)=q\}.\]
	It is obvious that $A$ and $B$ are  analytic  	
	and $(C,p)\subset(B,p)$. Now since $\psi(p)=p$ and $D_p\psi$ is unipotent, we can consider the infinitesimal generator $X_p$ of the germ $\psi_p,$ i.e., the nilpotent formal vector field  $X_p$ such that $\exp(X_p)=\psi_p.$ Consequently, $lX_p$ is the infinitesimal generator of $\psi^l_p=F^{kl}_p.$  Since  $(\psi_p^l)|_{B}=\id,$   Lemma \ref{lema.curva.fixa.sing.},  
	applied  to   the germ of $B$ at $p$,  implies that $(B,p)\subset \Sing(lX_p)=\Sing(X_p)$ and therefore  $\psi_{p}|_{B}=\id$.  
	In particular, we obtain  $ (C,p)\subset \overline{\mathrm{Per}_{k}} (F) $. 
	
	Now let us prove that $(C,p)\subset (E,p).$ Define $f: A \to {\mathbb C}$ by setting
	$f(q)= \det (D_q \psi)$. The restrictions of $f$ to the 
	irreducible components of the germ of $A$ at $p$ are
	holomorphic functions. As $F$ has the finite orbits property, it follows by the Stable Manifold Theorem that the eigenvalues of $D_q \psi$ have modulus $1$ and then 
	the image of $f$ is contained in the circle  $S^1$, 
	since  $\det (D_q \psi)$ 
	is the product of the eigenvalues of $D_q \psi.$ 
	In particular,  the image of $f$  does not contain any open set. 
	 We obtain that $f$ is locally constant in the neighborhood of $C \cap A$ in $A$
		by the open mapping theorem. 
		Since $f(p)=1$ and $ (C,p)\subset A$, we deduce that 
		$(C,p) \subset  \{ f = 1 \}$.
	Now we consider a function $g: A \to {\mathbb C},$ defined by $g(q)= \mathrm{tr} (D_q \psi).$ For each $q$ in some neighborhood of $p$ in $A$,
	 the eigenvalues $\lambda$ and $\mu$ of $D_q \psi$ satisfy $|\lambda|=|\mu|=1$ and 
		$\lambda \mu =1$ since $\spec (D_p \psi) \subset S^1$ and   $(C,p) \subset  \{ f = 1 \}$. 
		Therefore, we obtain
		$ \mathrm{tr} (D_q \psi) = \lambda + \overline{\lambda} = 2 \mathrm{Re}(\lambda) \in [-2,2]$. 
		Again, the restriction of $g$ to the irreducible components of $(A,p)$ defines holomorphic 
		functions whose images do not contain any open set. 
		Hence, $g \equiv g(p)=2$ is constant in a neighborhood of $p$ in $A$. 
		It follows that  $(C,p)\subset (E,p).$ This concludes the proof. 
\end{proof}
\strut

We have already seen in Lemma \ref{lemma_ext_of_the_comp} that each 
 $\overline{\mathrm{Per}}_k (F)$  has finitely many connected components, say, 
 $C^{k}_1,....,C^{k}_l$. 
Now, let us consider the \textit{family of all components of $F$ in $D,$} namely, 
\[{\mathcal A}:=\{C_j^k;\ k\in \N\ \mathrm{ and }\ C_j^k\ \mbox{ is a connected component of }  \overline{\mathrm{Per}}_k (F)   \ \}.\] 
We say that two components 	$C,D\in \mathcal A$ are equivalent 
(and then we write $C\sim D$) if there are  
\[C_{\alpha_1}^{k_1},...,C_{\alpha_r}^{k_r}\in\mathcal A\]
such that $C=C_{\alpha_1}^{k_1},\ D=C_{\alpha_r}^{k_r}$ and  $C_{\alpha_s}^{k_s}\cap C_{\alpha_{s+1}}^{k_{s+1}}\neq \emptyset$ for all $1\leq s<r.$ Notice that $\sim $ is a relation of equivalence in  $\mathcal A.$ 

\begin{rem}\emph{
		If $C_{\alpha_1}^{k_1},...,C_{\alpha_r}^{k_r}$ are components in $\mathcal{A}$ such that $C_{\alpha_s}^{k_s}\cap C_{\alpha_{s+1}}^{k_{s+1}}\neq \emptyset$ for all $1\leq s<r,$ then there are $j,k\in \N$ such that $\bigcup_{s} C_{\alpha_s}^{k_s}\subset C_{j}^{k}.$ If fact, since the union is connected and is contained in  $\overline{\mathrm{Per}}_{k_1 \hdots k_r} (F)$,  
		it is contained in $C^{k_1...k_r}_{j}$ for some  $j.$} 
\end{rem}

\begin{lemma}[stability of classes] \label{teo_stab_of_cla}
	If $[C]$ is an equivalence class (possibly infinite) in ${\mathcal A}/\sim,$ then 
	\[\bigcup_{C_j\in [C]}C_j= C^{k_0}_{j_0}\]
	for some $j_0,k_0.$
\end{lemma}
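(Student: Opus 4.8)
The plan is to exploit the finiteness established in Lemma \ref{lemma_ext_of_the_comp}: for each fixed $k$, the set $\overline{\mathrm{Per}}_k(F)$ has only finitely many connected components. First I would observe that, by the Remark preceding the lemma, any \emph{finite} chain $C_{\alpha_1}^{k_1}, \hdots, C_{\alpha_r}^{k_r}$ of pairwise-intersecting components in $[C]$ is contained in a single component $C_{j}^{k_1 \hdots k_r}$ of $\overline{\mathrm{Per}}_{k_1 \hdots k_r}(F)$. So each finite sub-chain is ``absorbed'' by one component of some iterate's periodic set. The issue is to upgrade this to the whole (possibly infinite) class $[C]$ sitting inside a single $C_{j_0}^{k_0}$.

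The key step is to produce a uniform bound on the period. I would argue by contradiction: suppose that for every $k$, the union $\bigcup_{C_j \in [C]} C_j$ is not contained in any single component of $\overline{\mathrm{Per}}_k(F)$. Using Lemma \ref{lema.esp.const.}, if some component $C_{\alpha}^{kl}$ in the class meets a component on which $D_p F^{k}$ is unipotent at some point, then in fact $C_{\alpha}^{kl} \subset \overline{\mathrm{Per}}_k(F)$, which lets one replace a large period $kl$ by the smaller period $k$. The plan is to use this to show that periods cannot grow without bound along the class: given a component $C_{\alpha}^{k}$ in $[C]$, at a point $p \in C_{\alpha}^{k}$ the eigenvalues of $D_pF^{k}$ lie on $S^1$ (Stable Manifold Theorem) and, being eigenvalues of a matrix in $\mathrm{GL}(2,\C)$ attached to a finite-orbits map, one shows the trace and determinant are locally constant on the component (exactly the argument in Lemma \ref{lema.da.pre.estab.da.uniao}), so the ``multiplier data'' is constant on each component. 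Then an intersecting pair $C_{\alpha}^{k} \cap C_{\beta}^{m} \neq \emptyset$ forces compatibility of these multipliers, and one extracts a common $k_0$ (e.g.\ a suitable least common multiple of the finitely many relevant periods, or the order of the finite set of multipliers that appear) such that every member of $[C]$ already lies in $\overline{\mathrm{Per}}_{k_0}(F)$.

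Once every $C_j \in [C]$ is contained in $\overline{\mathrm{Per}}_{k_0}(F)$ for a single fixed $k_0$, I would conclude as follows. Since $\overline{\mathrm{Per}}_{k_0}(F)$ has finitely many connected components $C_1^{k_0}, \hdots, C_l^{k_0}$, each $C_j \in [C]$ lies in one of them; and since members of $[C]$ are chained together by nonempty intersections, and intersecting connected subsets of $\overline{\mathrm{Per}}_{k_0}(F)$ must lie in the same connected component, all of them lie in one and the same $C_{j_0}^{k_0}$. Hence $\bigcup_{C_j \in [C]} C_j \subset C_{j_0}^{k_0}$. For the reverse inclusion, note $C_{j_0}^{k_0}$ is itself a component of $\overline{\mathrm{Per}}_{k_0}(F) \subset \overline{\mathrm{Per}}(F)$, and any chain realizing that $C_{j_0}^{k_0}$ belongs to $\mathcal{A}$ shows $C_{j_0}^{k_0} \sim C$, so $C_{j_0}^{k_0} \in [C]$ and thus $C_{j_0}^{k_0} \subset \bigcup_{C_j \in [C]} C_j$. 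This gives equality.

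The main obstacle I anticipate is the uniform bound on the period: a priori a class could be an infinite chain along which the periods $k$ diverge, and ruling this out is where the real content lies. The tool for this is Lemma \ref{lema.da.pre.estab.da.uniao} together with the observation that the possible eigenvalues of $D_pF^{k}$ at periodic points, being roots of unity of bounded order once one is on a finite-orbits biholomorphism in dimension $2$ (or at least taking only finitely many values along the relevant components by the local-constancy argument), cannot support arbitrarily large minimal periods along a connected chain; making this quantitative — identifying the right $k_0$ — is the heart of the argument.
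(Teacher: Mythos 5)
Your plan correctly identifies the difficulty (a uniform bound on the period across the class) and the right auxiliary tools (the Remark on finite chains and Lemma~\ref{lema.esp.const.}), and the final "conclude" paragraph — once a uniform $k_0$ is in hand, the finiteness of components of $\overline{\mathrm{Per}}_{k_0}(F)$ from Lemma~\ref{lemma_ext_of_the_comp} plus connectedness forces everything into one $C_{j_0}^{k_0}$ — is exactly right. But the heart of the argument is left as a gap, which you acknowledge: "identifying the right $k_0$... is the heart of the argument." Your suggestion of an LCM of "finitely many relevant periods" or "order of the finite set of multipliers" does not work as stated, because a priori a class could contain components of $\overline{\mathrm{Per}}_k(F)$ for infinitely many $k$, and the trace/determinant local-constancy observation by itself bounds the eigenvalues on each single component but gives no control as you pass from one component of the chain to the next.

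The missing idea is Lemma~\ref{lema.da.unip.com.cur.inv}. Reduce to the case where $[C]$ contains two distinct non-unitary components $C_a \subset \overline{\mathrm{Per}}_a(F)$, $C_b \subset \overline{\mathrm{Per}}_b(F)$ with $C_a \cap C_b \neq \emptyset$. Choose $p \in C_a \cap C_b$ with $(C_a,p) \neq (C_b,p)$. The germs at $p$ of the analytic sets $\{F^a = \mathrm{id}\}$ and $\{F^b = \mathrm{id}\}$ are then two \emph{distinct} nontrivial germs of $\mathrm{Fix}(F^{ab})$, so Lemma~\ref{lema.da.unip.com.cur.inv} forces $D_p F^{ab} = \mathrm{id}$. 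This is the source of the uniform period: $p$ lies in a component $C_\ell^{ab}$ of $\overline{\mathrm{Per}}_{ab}(F)$, and for any connected union $A$ of finitely many components of $[C]$ containing $p$, the set $E$ of Lemma~\ref{lema.esp.const.} (with $k = ab$) is nonempty, so $A$ is absorbed into $C_\ell^{ab}$. Since every $C_j \in [C]$ is joined to $C_a$ by a finite chain, varying $A$ gives $\bigcup_{C_j \in [C]} C_j = C_\ell^{ab}$. Without Lemma~\ref{lema.da.unip.com.cur.inv} your plan has no way to produce the unipotent point that makes Lemma~\ref{lema.esp.const.} applicable, so the central step does not go through.
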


\begin{proof}
	The result is obvious if there is at most one non-unitary component in $[C]$,
 i.e. a component $C_{j}^{k}$ of $[C]$ such that $\sharp C_{j}^{k} >1$. 
	Thus, we can assume that there are two distinct  non-unitary  
	components $C_a$ and $C_b$ of 
	 $[C]$ 
		such that $C_a \cap C_b \neq \emptyset$. 	
		Suppose that $C_a$ is a connected component of ${\mathcal P}_a$ and
		$C_b$ is a connected component of ${\mathcal P}_b$, where we denote 
		${\mathcal P}_j =\overline{\mathrm{Per}}_j (F)$. 
	
	\strut
	
	Let us show that there is $p\in C_a\cap C_b$ such that $D_pF^{ab}$ is unipotent. 
	First  of  all, there is $p \in C_a \cap C_b$ such that $(C_a,p) \neq (C_b,p)$ in 
	 $\overline{B}$ since $C_a$ and $C_b$ are connected. 
	Let $(V_a,p)$ and $(V_b,p)$ be the 
	gems of analytic set  of equation $F^a = Id$ and $F^b=Id,$ respectively, defined in some neighborhood of $p$ in $U.$ Since $C_a$ and $C_b$ are  non-unitary, 
	we have $\dim (V_a,p) \geq 1$ and $\dim (V_b,p) \geq 1.$

	We claim that $(V_a , p) \neq (V_b , p)$. Otherwise, we have 
	$(C_a\cup C_b,p)\subset   {\mathcal P}_a \cap {\mathcal P}_b $ and hence 
	\[  (C_a,p)=(C_a\cup C_b,p)=(C_b,p),\] 
	that contradicts the choice of $p$. Therefore we obtain $D_pF^{ab}=\mbox{id}$
	by Lemma \ref{lema.da.unip.com.cur.inv}.

	Consider the  connected component $C_{\ell}^{ab}$ of 
		${\mathcal P}_{ab}$  containing $p$. 
	Let $A$ be a connected union of finitely many components of $[C]$ that contains $p$.
	Then $A \subset  C_{\ell}^{ab}$ by Lemma 4.5. By varying $A$, we deduce  
	$\cup_{C_j \in [C]} C_j = C_{\ell}^{ab}$.
\end{proof}

\subsection{Structure of the set of periodic points}
Now, we combine the previous results to show Theorem \ref{thm:structure} and the Fixed
	Point Curve Theorem. 
\begin{proof}[Proof of Theorem \ref{thm:structure} and the Fixed Point Curve Theorem]
	Let $B$ an open ball such that $F$ and $F^{-1}$ are defined in a neighborhood of 
	$\overline{B}$.   
	Let  ${\mathcal P}$ be a connected component of  $\overline{\mathrm{Per}} (F)$. 
		It  is a countable union of elements of the family  $\mathcal A$ of components of $F$ in
	 $\overline{B}$. Up to consider only maximal components of $F$ in $\overline{B}$,
		we can suppose that such a union is disjoint by Lemma \ref{teo_stab_of_cla}. 
	 Therefore, we obtain ${\mathcal P} = C$ for some $C\in \mathcal A$ by 
		Sierpinski Theorem. Hence ${\mathcal P}$ is a connected component of some 
		$\overline{\mathrm{Per}}_k (F)$. Both $C$ and $\overline{\mathrm{Per}}_k (F)$ 
		are semianalytic by Lemma \ref{lemma_ext_of_the_comp}.

		Let  ${\mathcal Q}$ be a connected component of $\mathrm{Per} (F)$. 
		It is contained in a connected component ${\mathcal Q}'$ of $\overline{\mathrm{Per}} (F)$. 
		Then ${\mathcal Q}'$ is a semianalytic subset of  $\overline{\mathrm{Per}}_k (F)$
		for some $k \in \N$ by the first part of the proof. 
		We obtain ${\mathcal Q} \subset {\mathrm{Per}}_k (F)$ and hence
		the set ${\mathcal Q}$ is given locally by the equation $F^{k} = \mathrm{id}$. It follows that
		${\mathcal Q}$ is a complex analytic subset of the open ball. 
		Since the set  ${\mathrm{Per}}_k (F) \cap {\mathcal Q}'$ is semianalytic and relatively compact,
		it follows that it has finitely many connected components \cite[Cor. 2.7]{bierstonemilman1988}
		that are all semianalytic. We deduce that  ${\mathcal Q}$ is a
		semianalytic subset of ${\mathbb C}^{2}$.

		We claim that $\dim ({\mathcal Q},p)  \geq 1$ for any $p \in {\mathcal Q}$. 
		This is equivalent to the property $\sharp {\mathcal Q} >1$ since ${\mathcal Q}$ is a connected
		component of $\mathrm{Per}_{k}(F)$. 
		First, suppose $0 \in {\mathcal Q}$. Since 
		${\mathcal Q}' \subset \overline{\mathrm{Per}}_k (F)$,  
		there exists a neighborhood $W$ of the origin 
		such that 
		\[ W \cap {\mathcal Q} = W \cap {\mathcal Q}' = W \cap \mathrm{Fix}(F^{k}). \] 
		Note that ${\mathcal Q}'$ is a continuum that contains the non-trivial subcontinuum
		$K$ obtained in Lemma  \ref{lema.do.compacto.n=2}. 
		Since ${\mathcal Q}'$ is a non-trivial continuum, we deduce that the germ of 
		$\mathrm{Fix}(F^{k})$ and then of ${\mathcal Q}$ at the origin have positive dimension
		and thus contains an analytic curve $\Gamma$ passing through $0$. 
		Finally, consider a general connected component ${\mathcal Q}$ of $\mathrm{Per} (F)$. 
		Given $p \in {\mathcal Q}$, 
		there exists a germ of analytic curve $\Gamma'$ at $p$ contained in $\mathrm{Per}_{l}(F)$
		for some $l \in \N$ by the previous discussion.
		Since $(\Gamma', p) \subset {\mathcal Q}$, we obtain 
		$\dim ({\mathcal Q},p)  \geq 1$.
\end{proof}
\strut


\begin{proof}[Proof of Theorem \ref{thm_at_least_one_root}]
 The eigenvalues of $D_0 F$ belong to the unit circle by Corollary \ref{cor_fin.orb_imp_mod1}. 
	By the Fixed Point Curve Theorem, 
	there exists an analytic curve $\Gamma \subset \mathrm{Fix}(F^k)$ 
	through the origin 
	for some $k \geq 1$. Suppose $1 \not \in \mathrm{spec} (D_0 F^k).$ Then $D_0 (F^k - \mathrm{Id})$ is a regular matrix, therefore $F^{k}-\mathrm{Id}$ is a local diffeomorphism at $0.$ 
	But this contradicts $\Gamma \subset  (F^k - \mathrm{Id})^{-1} (0)$. Therefore, 
	we obtain $1 \in \mathrm{spec} (D_0 F^{k})$. Since the eigenvalues of $D_0 F^k$ 
	are $k$th-powers  of eigenvalues of $D_0 F,$ it follows that there exists 
	$\lambda \in \mathrm{spec}(D_0 F)$ such that $\lambda^{k}=1.$	
\end{proof}
\begin{rem}\emph{
 Corollary \ref{cor_orbfin_imp_alg.crit} provides negative algebraic criteria for the finite orbits 
property. For instance, let 
$F(x,y)= (x+ f_1(x,y), y+ f_2 (x,y)) \in  \mathrm{Diff}_{1} ({\mathbb C}^{2},0)$
where $f_j (x,y) = \sum_{k=m}^{\infty} P_{k,j}(x,y)$ is the expansion of 
$f_j \in {\mathbb C} \{x,y\}$ as a sum of homogeneous polynomials 
for $j \in \{1,2\}$, where $m \geq 2$. 
Assume that $P_{m,1}$ and $P_{m,2}$ are relatively prime. 
Since 
\[ x \circ F^{k} - x = k P_{m,1} + h.o.t. \ \mathrm{and} \   y \circ F^{k} - y = k P_{m,2} + h.o.t. \]
we deduce that the fixed point $(0,0)$ of $F^{k}$ is isolated for any $k \in {\mathbb N}$
and hence $F$ does not satisfy the 
finite orbits property.}
\end{rem}
 Later on, we will see that there exists $F \in \diff{}{2}$ with finite orbits but $D_0 F$ has no finite
orbits (see Theorem  \ref{thm_irrat_bih_with_fin_orb}).  It makes sense to study whether the
finite orbits property for other actions naturally associated to $F$ implies 
$\mathrm{spec} (D_0 F)  \subset e^{2 \pi i {\mathbb Q}}$.
We are going to consider the blow-up 
$\pi: \tilde{\mathbb C}^{2} \to {\mathbb C}^{2}$ of the origin and the diffeomorphism 
$\tilde{F}$ induced by $F$ in a neighborhood of the divisor $D:= \pi^{-1}(0)$ 
(see \cite{ribon2005}). 
\begin{cor}
Let $F \in  \diff{}{2}$. Assume that the germ of $\tilde{F}$ defined in the neighborhood of 
$D$ in $\tilde{\mathbb C}^{2}$ has finite orbits. Then $\mathrm{spec} (D_0 F)$ consists of roots of
unity. 
\end{cor}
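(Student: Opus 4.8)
The plan is to reduce the statement about $\tilde F$ acting near the divisor $D$ to the Fixed Point Curve Theorem applied to $\tilde F$, and then to transfer the resulting root-of-unity information back down to $D_0F$ via the classical formula for the eigenvalues of $\tilde F$ at the fixed points on $D$. First I would observe that $\tilde F$ is a biholomorphism defined in a neighborhood of the compact curve $D \cong \mathbb P^1(\mathbb C)$, and that it restricts to a biholomorphism $\tilde F|_D$ of $D$. The finite orbits hypothesis for the germ of $\tilde F$ near $D$ passes to the germ of $\tilde F$ at any fixed point $p \in D$ of $\tilde F|_D$, i.e. $\tilde F \in \diff{<\infty}{2}$ at each such $p$ (after a local chart); note that $\tilde F|_D$ has at least one fixed point since it is a holomorphic self-map of $\mathbb P^1(\mathbb C)$, and in fact the fixed points of $\tilde F|_D$ correspond to the eigendirections of $D_0F$, so there is at least one and at most two.

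Next I would apply Corollary \ref{cor_fin.orb_imp_mod1} and Theorem \ref{thm_at_least_one_root} (the Fixed Point Curve Theorem) to the germ of $\tilde F$ at such a fixed point $p \in D$. This gives: all eigenvalues of $D_p\tilde F$ lie on the unit circle, and at least one of them is a root of unity. Now recall the structure of $D_p\tilde F$ at a point of the divisor lying over a blow-up of the origin: in the standard chart $(x,t)$ with $\pi(x,t)=(x, tx)$, the point $p$ on $D=\{x=0\}$ corresponds to a slope $t_0$ which is an eigendirection of $D_0F$, with eigenvalue $\mu$, say $D_0F$ having eigenvalues $\mu$ (in direction $t_0$) and $\nu$; then the two eigenvalues of $D_p\tilde F$ are $\mu$ (the normal eigenvalue, along $x$) and $\nu/\mu$ (the tangential eigenvalue, along $t$, when $D_0F$ is diagonalizable; the non-diagonalizable case is handled analogously with $\mu = \nu$ and tangential eigenvalue $1$). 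This is the key computation, and it is the standard one recorded in \cite{ribon2005}.

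With this in hand the argument closes as follows. If $D_0F$ has two distinct eigenvalues $\mu,\nu$, there are two fixed points $p_1,p_2$ of $\tilde F|_D$ with $\spec(D_{p_1}\tilde F) = \{\mu, \nu/\mu\}$ and $\spec(D_{p_2}\tilde F) = \{\nu, \mu/\nu\}$. From $|\mu|=|\nu|=1$ (Corollary \ref{cor_fin.orb_imp_mod1} for $F$, or equivalently applied to $\tilde F$) and "at least one eigenvalue at $p_1$ is a root of unity", we get that either $\mu \in e^{2\pi i \mathbb Q}$ or $\nu/\mu \in e^{2\pi i \mathbb Q}$; similarly at $p_2$, either $\nu \in e^{2\pi i \mathbb Q}$ or $\mu/\nu \in e^{2\pi i \mathbb Q}$. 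In the first alternative of each pair we are done immediately for that eigenvalue; the surviving bad case is $\nu/\mu$ and $\mu/\nu$ both roots of unity, which says $(\nu/\mu)^N = 1$ for some $N$, i.e. $\nu^N = \mu^N$. To finish I would blow up further, or rather iterate the observation: consider instead $F^N$, whose eigenvalues at the origin are $\mu^N = \nu^N =: \rho$, so $D_0 F^N$ has a single eigenvalue $\rho$ with $|\rho|=1$; by Proposition \ref{propo_equiv_of_fin.orb}(ii), $\tilde{F^N} = (\tilde F)^N$ still has finite orbits near $D$, and applying the Fixed Point Curve Theorem to $(\tilde F)^N$ at a fixed point on $D$ gives an eigenvalue which is a root of unity; since the normal eigenvalue there is $\rho$ and the tangential one is $1$, in the non-diagonalizable-type situation this still forces us to argue — so more cleanly, I would note that $F^N$ now has $\spec(D_0 F^N) = \{\rho\}$ with $\rho$ on the unit circle, and directly apply Theorem \ref{thm_at_least_one_root} to $F^N$ itself (which has finite orbits), concluding $\rho \in e^{2\pi i \mathbb Q}$, hence $\mu^N = \nu^N = \rho$ is a root of unity and so are $\mu,\nu$. (If instead $D_0F$ has a single eigenvalue, one applies Theorem \ref{thm_at_least_one_root} to $F$ directly and is done without blowing up.)

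The main obstacle I anticipate is purely bookkeeping: carefully setting up the two affine charts covering the divisor, identifying which fixed points of $\tilde F|_D$ exist (one versus two, according to whether $D_0F$ is diagonalizable, a scalar multiple of identity, or a non-trivial Jordan block — in the scalar case every point of $D$ is fixed by $\tilde F|_D$ and the normal eigenvalue is $\mu$ with tangential eigenvalue $1$), and verifying the eigenvalue formula $\spec(D_p\tilde F) = \{\mu, \nu/\mu\}$ in each case including the Jordan block. Once the dictionary between the spectrum of $D_0F$ and the spectra of $D_p\tilde F$ over fixed points $p$ is pinned down, the logical combination above — using $|\cdot|=1$ to rule out the Cremer/Bruno-type escape and reducing the remaining $\nu/\mu$-root-of-unity case to $F^N$ — is routine.
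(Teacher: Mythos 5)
Your argument is correct, but it is substantially more circuitous than the paper's and leaves one case only sketched. The paper's proof is strikingly short: since $D\cong\mathbb P^1(\mathbb C)$ is compact and $\tilde F$-invariant, the finite-orbits hypothesis for $\tilde F$ near $D$ forces $\tilde F|_D$ to have finite orbits on all of $\mathbb P^1$, and a M\"obius transformation with finite orbits is periodic; hence $D_0F$ induces a finite-order element of $\mathrm{PGL}(2,\mathbb C)$, which at once rules out the Jordan block and shows $D_0F$ is diagonalizable with eigenvalue ratio $\lambda/\mu$ a root of unity. Combined with Theorem~\ref{thm_at_least_one_root} applied to $F$, both eigenvalues are roots of unity. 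You instead invoke the Fixed Point Curve Theorem at the germs of $\tilde F$ at fixed points $p_1,p_2\in D$, use the blow-up spectrum formula $\mathrm{spec}(D_{p_1}\tilde F)=\{\mu,\nu/\mu\}$, and chase cases, closing the ``bad case'' by passing to $F^N$; this is valid, but the quantity $\nu/\mu$ you extract is precisely the multiplier of $\tilde F|_D$ at $p_1$, and its being a root of unity follows already from the elementary one-dimensional periodicity of $\tilde F|_D$, without Theorem~\ref{thm_at_least_one_root} at the blown-up point. Moreover, the Jordan-block case, which you flag as ``analogous,'' is in fact not analogous for your method: the tangential eigenvalue at the unique fixed point on $D$ is $1$, so Theorem~\ref{thm_at_least_one_root} applied there yields nothing, and you would either have to apply Theorem~\ref{thm_at_least_one_root} to $F$ directly (as you do for the scalar case) or, more simply, note that a parabolic $\tilde F|_D$ has infinite orbits and so cannot occur---the paper's observation handles this and the diagonalizable-ratio claim in one stroke.
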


	\begin{proof}
The diffeomorphism $\tilde{F}_{|D}$ has finite orbits and hence $D_0 F$ induces an element of
finite order of $\mathrm{PGL} (2, {\mathbb C})$.
Thus $D_0 F$ is diagonalizable and has eigenvalues $\lambda, \mu \in \C^{*}$ such that 
$\lambda / \mu$ is a root of unity.  Since at least one eigenvalue of $D_0 F$ is a root of unity
by Theorem \ref{thm_at_least_one_root}, 
we deduce $\mathrm{spec} (D_0 F)  \subset e^{2 \pi i {\mathbb Q}}$.
\end{proof}


\begin{rem}
	\emph{In general, a germ of biholomorphism $H$ does not admit germs of fixed point curves, even when $F\in \diff{<\infty}{2}.$ For example, the germ given by $F(x,y)=(-x,-x-y)$ has the finite orbits property, because  it is linear and $\spec(D_0F)=\{-1\},$ but the only fixed point of $F$ is the origin.  Note, however, that  $F^2(x,y)=(x,2x+y)$ and $\{x=0\}$ is a  fixed point curve of $F^2.$
	 Moreover, the curve $\{x=0\}$ is an irreducible curve invariant by $F$.}
\end{rem}
We conclude this section providing an example of   $F \in \mathrm{Diff} ({\mathbb C}^{2},0)$
that has the finite orbits property but has no irreducible germ of invariant curve. 
The diffeomorphism $F$ is of the form $F= S \circ T$ where 
$S(x,y)= (iy, ix)$, $T  = \mathrm{exp} (X)$ and 
\[ X = xy \left( x \frac{\partial}{\partial x} -   y \frac{\partial}{\partial y} \right) + 
i x^{2} y^{2} \left( x \frac{\partial}{\partial x} +   y \frac{\partial}{\partial y} \right) .\]
Note that $S^{*} X = X$ and hence $S$ and $T$ commute. Moreover $S$ has order $4$. 
Any germ of irreducible curve $\gamma$ that is invariant by $F$ is also invariant by $F^4$
and hence by $\mathrm{exp} (4 X)$.
Since $X$ is the infinitesimal generator of a tangent
to the identity local biholomorphism, we deduce that $\gamma$ is invariant by $X$
 by Lemma \ref{lema.curva.fixa.sing.}. 
Note that the singularity of $X/(xy)$ at the origin is reduced and hence $X/(xy)$ has only two 
irreducible invariant curves by Briot and Bouquet theorem, namely the $x$ and $y$ axes. 
As a consequence, the axes are the unique irreducible germs of $X$-invariant curves.
Since $S$ permutes the axes, it follows that $F$ has no irreducible germ of invariant curve. 

Let us show that $F$ has finite orbits.  It suffices to prove that
$F^{4} = T^{4}$ has finite orbits by  Proposition \ref{propo_equiv_of_fin.orb}.  
Indeed, it suffices to show that
$T$ has finite orbits by the same result.  Next, we study the action induced by $X$ on the leaves of the foliation $d(xy)=0$. Such a 
foliation is preserved by $X$ since 
$X (xy) = 2 i  (x y)^{3} $. 
We can relate the properties of $X$ with those of $Z  = 2 i z^{3} \partial / \partial z$ and 
its time $1$ map $G= \mathrm{exp} (Z)$. 
Indeed, we have 
\[ (xy) \circ T^{k} (x, y) =  G^{k} (x y) \]
for $k \in {\mathbb Z}$.

Fix a small bounded neighborhood $V'$ of $0$ in 
${\mathbb C}$ and a small bounded neighborhood $V$ of $(0,0)$ in 
${\mathbb C}^{2}$ such that $(xy) (V) \subset V'$. 
Consider $(x_0, y_0) \in V$ and denote $z_0 = x_0 y_0$. 
Since the axes consist of fixed points of $T$, we can suppose $x_0 y_0 \neq 0$. 
Assume, aiming at a contradiction that the positive $T$-orbit of $(x_0, y_0)$ in $V$
is infinite. Therefore, $G^{k} (z_0)$ is well-defined and belongs to $V'$ for any $k \geq 0$. 
Since $G$ has a dynamics of flower type, we deduce 
\[ \lim_{k \to \infty} G^{k} (z_0) =0 \ \ \mathrm{and} \ \ 
\lim_{k \to \infty} \frac{G^{k} (z_0)}{|G^{k} (z_0)|} \in \{ e^{\frac{i \pi}{4}}, - e^{\frac{i \pi}{4}} \} .  \]
Assume that the latter limit is equal to $e^{\frac{i \pi}{4}}$. Let us study the variation of 
the monomials $x^{a} y^{b}$ by iteration; we have
\[ x^{a} y^{b} \circ T = x^{a} y^{b}  (1 + (a -b) xy + O(x^{2} y^{2})) \]
for any $(a,b) \in {\mathbb Z}_{\geq 0} \times  {\mathbb Z}_{\geq 0}$.
As a consequence, we get 
\[ |x \circ T^{k+1} (x_0,y_0)| >  |x \circ T^{k} (x_0,y_0)|, \ \ 
|(x^{2} y) \circ T^{k+1} (x_0,y_0)| >  |(x^{2} y) \circ  T^{k} (x_0,y_0)|  \]
for any non-negative integer number $k$. Since $|x|$ increases along the positive $T$-orbit 
of $(x_0,y_0)$ and $\lim_{k \to \infty} (xy)(T^{k}(x_0,y_0)) =0$, we get
$\lim_{k \to \infty} y (T^{k}(x_0,y_0)) =0$. 
Moreover, since $|x^{2} y|$ increases along the positive $T$-orbit of $(x_0, y_0)$ it follows that 
$\lim_{k \to \infty} |x| (T^{k}(x_0,y_0)) =\infty$. This property contradicts that $V$ is bounded. 
The  case $\lim_{k \to \infty} \frac{G^{k} (z_0)}{|G^{k} (z_0)|} =  - e^{\frac{i \pi}{4}}$
is treated in a similar way. Analogously, we can show that the negative $T$-orbit of $(x_0,y_0)$
is finite.  




\section{Non-virtually unipotent biholomorphisms with finite orbits}
\label{sec:non-virtual}
So far, all the examples in the literature of finite orbits local diffeomorphisms
were virtually unipotent,  i.e. 
the eigenvalues of  their linear parts were roots of unity.
The likely reason is revealed in Theorem \ref{thm_flow_fin_orb}: 
 time 1 maps that satisfy the finite orbits 
property have roots of unity eigenvalues.  
In this section we construct  a family of local diffeomorphisms that 
satisfy the finite orbits property but are non-virtually unipotent. 

\begin{defi}
	\emph{We say that $\lambda\in \C$ is a Cremer number
	if $\lambda$ is not a root of unity, but  
		\[ |\lambda|=1\ \mbox{and} \ \liminf_{m \to \infty} \sqrt[m]{|\lambda^{m}-1|} =0 . \]
	This equation is called  Cremer's condition.}
\end{defi}

Fix $n\geq 1$ and consider coordinates $x=(x_1,...,x_n)\in \C^n$ and $y\in \C.$
Given $j \in {\mathbb N}$ we denote by $[j]$ the unique natural number $j' \in \{1, \hdots, n\}$
such that $j-j'$ is a multiple of $n$. 
The proof of Theorem \ref{thm_irrat_bih_with_fin_orb} consists in building 
a convergent power series
	\begin{eqnarray}\label{a(x)}
	a(x_1,\hdots,x_n) =  \sum_{j=1}^{\infty}  \frac{(2 j x_{[j]})^{m_{j}}}{M_{j}^{m_{j}}}
	=   \sum_{j=1}^{\infty}   \left(\frac{2j}{M_j}\right)^{m_j}x_{[j]}^{m_j},
	\end{eqnarray}
	where  $(m_j)_{j \geq 1}$ is an increasing sequence of natural numbers and 
	$(M_j)_{j \geq 1}$ is a sequence of positive numbers that will be chosen to ensure that 
	\begin{eqnarray}\label{F(x,y)}
        F(x,y) = (\lambda x_1, \lambda x_2, \hdots, \lambda x_n, y + a(x_1, \hdots, x_n))
        \end{eqnarray}
       has finite orbits. 
 We need  auxiliary sequences  $(k_j)_{j \geq 1}$ and
$(r_j)_{j \geq 1}$ of natural numbers. 
They satisfy certain conditions that are provided by the following lemma.
\begin{lemma}
\label{lem:seq}
Let $\lambda$ be a Cremer number. There exist a sequence  $(M_j)_{j \geq 1}$ in ${\mathbb R}_{+}$ and sequences  $(m_j)_{j \geq 1}$, $(k_j)_{j \geq 1}$ and $(r_j)_{j \geq 1}$ in ${\mathbb N}$ such that, for any $j \in {\mathbb N}$, 
\begin{enumerate}
	\item[(C1)] $M_j := \frac{1}{\sqrt[{m_j}]{|\lambda^{m_j}-1|}}\ $ satisfies $M_j \geq 4j^{2}$; 
	\item[(C2)] $2^{m_j}  \geq 1 + j + \sum_{\ell=1}^{j-1} 2 (2\ell)^{m_\ell} j^{m_\ell}\  (2^{m_1}\geq 2)$;
	\item[(C3)] $\min (m_j, r_j) > \max (m_{j-1}, r_{j-1})$ if $j \geq 2$;
	\item[(C4)] $|\lambda^{k_j m_j} - 1| \geq 1$;
	\item[(C5)] $k_j \sum_{\ell = r_j}^{\infty} \frac{1}{2^{\ell}} < 1$. 
\end{enumerate} 

\end{lemma}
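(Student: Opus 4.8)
The plan is to construct the four sequences by an inductive/recursive procedure, choosing at step $j$ first $m_j$, then $M_j$ (which is forced by (C1)), then $r_j$, and finally $k_j$. The key observation is that the constraints are essentially triangular: (C1) and (C2) only involve $m_j$ (and the previously chosen data $m_1,\dots,m_{j-1}$), condition (C3) couples $m_j$ and $r_j$ to the earlier terms, condition (C4) involves only $k_j$ and $m_j$, and (C5) involves $k_j$ and $r_j$. So I would process the constraints in the order (C2)$\wedge$(C3) for $m_j$, then (C1) to \emph{define} $M_j$ and check the lower bound $M_j\ge 4j^2$, then (C3) again for $r_j$, then (C4) for $k_j$, and finally verify (C5) can be met by enlarging $r_j$ if necessary.

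The first and main point where the Cremer hypothesis enters is in simultaneously satisfying (C1) and (C4). Since $\lambda$ satisfies Cremer's condition, $\liminf_{m\to\infty}\sqrt[m]{|\lambda^m-1|}=0$, so there are infinitely many exponents $m$ with $\sqrt[m]{|\lambda^m-1|}$ as small as we like; in particular, for any prescribed lower bound (here $4j^2$, together with the requirement $m>\max(m_{j-1},r_{j-1})$ and the growth condition (C2), each of which only \emph{forces $m_j$ to be large}) we may pick $m_j$ large enough along this subsequence so that $M_j=1/\sqrt[{m_j}]{|\lambda^{m_j}-1|}\ge 4j^2$. Condition (C2) is just asking $2^{m_j}$ to dominate a fixed finite quantity depending on $m_1,\dots,m_{j-1}$, so it too is satisfied once $m_j$ is sufficiently large; thus the three conditions (C1), (C2), (C3)-for-$m_j$ are compatible because they all push $m_j$ upward and the $\liminf$ gives arbitrarily large admissible $m_j$.

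Once $m_j$ and hence $M_j$ are fixed, I would choose $r_j$ to be any integer with $r_j>\max(m_{j-1},r_{j-1})$, completing (C3). For (C4): the quantity $|\lambda^{k m_j}-1|$ does not depend monotonically on $k$, but since $|\lambda|=1$ and $\lambda$ is not a root of unity, $\lambda^{m_j}$ is an irrational rotation, so $\{\lambda^{k m_j}\}_{k\ge1}$ is dense on the unit circle; hence there exists $k_j\in\N$ with $\lambda^{k_j m_j}$ bounded away from $1$, say $|\lambda^{k_j m_j}-1|\ge 1$ (any value in $(0,2]$ works, and $1$ is achievable). This fixes $k_j$. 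Finally, with $k_j$ now determined, (C5) reads $k_j\sum_{\ell\ge r_j}2^{-\ell}=k_j\,2^{-r_j+1}<1$, which holds as soon as $2^{r_j-1}>k_j$; since we are still free to \emph{increase} $r_j$ (doing so only strengthens (C3)), we replace $r_j$ by $\max(r_j,\,\lceil\log_2 k_j\rceil+2)$ and (C5) holds. The induction then proceeds to $j+1$.

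The only genuine subtlety — the step I expect to be the main obstacle — is the interplay between (C1) and (C4): one wants $m_j$ chosen from the sparse set where $\sqrt[{m_j}]{|\lambda^{m_j}-1|}$ is tiny (so that $M_j$ is forced large), and then, \emph{after} $m_j$ is fixed, one still needs a multiplier $k_j$ making $|\lambda^{k_j m_j}-1|$ not small. These pull in opposite directions for the \emph{same} base exponent $m_j$, but they are reconciled by the fact that (C1) is a statement about the \emph{single} exponent $m_j$ while (C4) is about the \emph{multiples} $k_j m_j$; equidistribution of $(\lambda^{k m_j})_k$ (equivalently, that $\lambda^{m_j}$ is not a root of unity, which holds because $\lambda$ is not) guarantees a good $k_j$ regardless of how close $\lambda^{m_j}$ itself is to $1$. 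Everything else is bookkeeping with the ordering of the choices.
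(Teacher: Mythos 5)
Your proposal is correct and follows essentially the same recursive construction as the paper: at each step $j$ the Cremer hypothesis supplies arbitrarily large $m_j$ with $M_j=1/\sqrt[m_j]{|\lambda^{m_j}-1|}$ as large as desired (handling (C1)--(C3)), density of $(\lambda^{k m_j})_k$ on the circle (since $\lambda^{m_j}$ is not a root of unity) supplies $k_j$ for (C4), and finally $r_j$ is taken large enough for (C5). Your minor reordering of the choice of $r_j$ (preliminary choice, then enlarge after $k_j$) is immaterial, and the key observation you flag — that (C1) concerns the single exponent $m_j$ while (C4) concerns the multiples $k_j m_j$, so they do not conflict — is exactly why the paper's terse argument goes through.
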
	
\begin{proof}
Since $\lambda$ satisfies the Cremer condition, we can choose $m_1$ and $M_1$ such that the first three conditions hold, 
 where $(C3)$ is an empty condition. 
As $\lambda^{m_1}$ is not a root of unity, the sequence $(\lambda^{k m_1})_{k \geq 1}$ is dense on the unit circle; hence, there exists 
$k_1 \in {\mathbb N}$ such that the fourth condition holds for $j=1.$ Now we can define $r_1$ in such a way that the last condition holds for $j=1.$ Analogously, we can define $(M_2, m_2)$, $k_2$ and $r_2$ such that $(C1)-(C5)$ hold for $j=2.$ Indeed, we define the sequences $(M_j)_{j \geq 1}$,  $(m_j)_{j \geq 1}$,  $(k_j)_{j \geq 1}$ and $(r_j)_{j \geq 1}$ recursively for $j \in {\mathbb N}$.  
\end{proof}

\begin{rem}\label{rem_lambda_inv}
	\emph{ Notice that conditions $(C1)$, $(C2)$, $(C3)$, $(C4)$ and $(C5)$ still hold if we replace 
		$\lambda$ by  $\lambda^{-1},$ since $\overline{\lambda} = \lambda^{-1}$ implies 
		\[ |\lambda^{-n} -1| = |\overline{\lambda^{n}-1}| = | \lambda^{n} -1| \]
		for any $n \in {\mathbb Z}$.}
\end{rem}
\strut

\begin{lemma}\label{lema.a.e.inteira.}
       Consider the setting provided by Lemma \ref{lem:seq}. 
	Then $a(x)$ (cf. (\ref{a(x)})) is an entire function of ${\mathbb C}^{n}$. 
	Moreover, the map  $F(x,y)=(\lambda x, y+ a(x))$ 
	 is a holomorphic automorphism of $\C^{n+1}$ whose inverse is 
	\[  F^{-1}(x_1, \hdots, x_n,y)=(\lambda^{-1} x_1, \hdots, \lambda^{-1} x_n , 
	y - a(\lambda^{-1} x_1, \hdots, \lambda^{-1} x_n)) .   \]
 \end{lemma}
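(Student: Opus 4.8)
The plan is to first establish that the series defining $a(x)$ converges on all of $\C^n$, and then to verify the formula for $F^{-1}$ by a direct computation. For the convergence, I would estimate the general term $\left(\frac{2j}{M_j}\right)^{m_j} x_{[j]}^{m_j}$ on a polydisc $\{|x_i| \le R\}$. By condition (C1) we have $M_j \ge 4 j^2$, so $\frac{2j}{M_j} \le \frac{1}{2j}$, and therefore the $j$-th term is bounded in modulus by $\left(\frac{R}{2j}\right)^{m_j}$. Since $(m_j)_{j\ge 1}$ is strictly increasing (condition (C3)), hence $m_j \ge j$, for any fixed $R$ one has $\frac{R}{2j} \le \frac12$ once $j > R$, so the tail is dominated by a geometric series $\sum_j 2^{-m_j} \le \sum_j 2^{-j}$. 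This shows locally uniform (indeed uniform on every polydisc) convergence, so $a$ is entire on $\C^n$.

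Next, since $a$ is entire and $\lambda \in \C^*$, the map $F(x,y) = (\lambda x, y + a(x))$ is a holomorphic self-map of $\C^{n+1}$, and it is clearly bijective: given $(x',y')$, the first $n$ coordinates force $x = \lambda^{-1} x'$, and then $y = y' - a(\lambda^{-1} x')$ is uniquely determined. To confirm the claimed formula for $F^{-1}$, set $G(x_1,\dots,x_n,y) = (\lambda^{-1}x_1,\dots,\lambda^{-1}x_n, y - a(\lambda^{-1}x_1,\dots,\lambda^{-1}x_n))$ and compute $F \circ G$ and $G \circ F$ directly: the linear parts obviously cancel, and the $y$-components match because $F(G(x,y))$ has last coordinate $\big(y - a(\lambda^{-1}x)\big) + a\big(\lambda \cdot \lambda^{-1} x\big) = y$, and symmetrically for $G \circ F$. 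Since $a$ is entire, $G$ is holomorphic, so $F$ is a holomorphic automorphism of $\C^{n+1}$ with $F^{-1} = G$.

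I do not anticipate a serious obstacle here; the only point requiring care is organizing the convergence estimate so that it is genuinely uniform on each polydisc (rather than merely pointwise), which is what justifies that $a$ is holomorphic — but condition (C1) is tailored precisely for this, giving the clean bound $\frac{2j}{M_j}\le\frac{1}{2j}$, and conditions (C2)--(C3) guarantee $m_j\to\infty$ fast enough that the comparison with $\sum 2^{-j}$ goes through. (Condition (C2) will matter later for the finite orbits argument, not for this lemma, so I would not invoke it beyond noting $m_j \ge j$.)
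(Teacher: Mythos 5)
Your proof is correct and takes essentially the same approach as the paper: the paper observes that by (C1) the $m_j$-th root of the $j$-th coefficient, namely $2j/M_j$, tends to $0$ (a root-test argument giving infinite radius of convergence), and then verifies the inverse formula by direct computation; your version makes the same estimate explicit by comparing the tail with $\sum 2^{-j}$, which is a minor repackaging of the same idea.
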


\begin{proof}
        Since $M_j \geq 4 j^{2}$ for any $j \geq 1$,  it follows that  
\[ \lim_{j \to \infty} \sqrt[m_j]{{\left( \frac{2j}{M_j} \right)}^{m_j}} =  
 \lim_{j \to \infty}  \frac{2j}{M_j} = 0 \]
       and hence $a(x_1, \hdots, x_n)$ is an entire function.
       We can verify directly that 
       $F^{-1}(x,y)= (\lambda^{-1} x, y - a(\lambda^{-1} x))$
	is the inverse of $F$.  
\end{proof}
\strut

Now note that if $A(x)= \sum a_{j_1 \hdots j_n} x_{1}^{j_1} \hdots x_n^{j_n}$ is a power series and 
$G(x,y)=(\lambda x_1, \hdots, \lambda x_n,  y+ A(x_1, \hdots, x_n)),$ then 
we can show that
\[G^k (x,y) = (\lambda^{k} x_1, \hdots, \lambda^{k} x_n,  y + (L_k A)(x_1, \hdots, x_n))\]
 for any $k \in {\mathbb N}$ by induction, 
where $L_k$ is the linear operator of the ring of convergent power series defined by 
\begin{eqnarray}\label{eq.prop.de.lk}
(L_k A) (x) &:=& A(x) + A (\lambda x) + \hdots + A(\lambda^{k-1} x) \\
&=& \sum_{j\in\N^n}  (1 + \lambda^{|j|} + \lambda^{2 |j|} + \hdots + \lambda^{(k-1) |j|}) a_{j_1 \hdots j_n} x_{1}^{j_1} \hdots x_n^{j_n}\nonumber  \\
&=& \sum_{j\in\N^n}  \frac{\lambda^{k |j|} -1}{\lambda^{|j|}-1} a_{j_1 \hdots j_n} x_{1}^{j_1} \hdots x_n^{j_n},\nonumber \end{eqnarray}
where $|j|=j_1 + j_2 + \hdots + j_n$.

\begin{lemma}\label{lema.cotas.para.a}
	\label{lem:orb_fin}
	Consider $j \geq 1$ and $x\in \C$ with 
	$\max_{1 \leq k \leq n} |x_k|   \leq j$ and $|x_{[j]}| \geq 1/j$. Then we have 
	\[j\leq|(L_{k_j} a)(x)|\leq 2(2j^2)^{m_j}+2^{m_j}-j.\] 
\end{lemma}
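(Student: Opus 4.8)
The strategy is to estimate $(L_{k_j}a)(x)$ by isolating the $j$-th term of the series defining $a$ and showing that it dominates, while all other terms are comparably negligible under the hypotheses $\max_k |x_k| \le j$ and $|x_{[j]}| \ge 1/j$. Recall from the formula for $L_k$ that
\[
(L_{k_j} a)(x) = \sum_{\ell=1}^{\infty} \frac{\lambda^{k_j m_\ell} - 1}{\lambda^{m_\ell} - 1} \left(\frac{2\ell}{M_\ell}\right)^{m_\ell} x_{[\ell]}^{m_\ell},
\]
since the general monomial $x_{[\ell]}^{m_\ell}$ has total degree $m_\ell$. Writing $c_\ell := (2\ell/M_\ell)^{m_\ell} x_{[\ell]}^{m_\ell}$ and $d_\ell := (\lambda^{k_j m_\ell}-1)/(\lambda^{m_\ell}-1)$, I would split the sum as the single term $\ell = j$ plus the tail $\ell > j$ plus the head $\ell < j$, and bound each piece.

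\textbf{The dominant term.} For $\ell = j$, use $M_j = 1/\sqrt[m_j]{|\lambda^{m_j}-1|}$ from (C1), so that $(2j/M_j)^{m_j} = (2j)^{m_j} |\lambda^{m_j}-1|$, which gives $|d_j c_j| = |\lambda^{k_j m_j}-1| \cdot (2j)^{m_j} |x_{[j]}|^{m_j}$. Since $|x_{[j]}| \ge 1/j$ we get $(2j)^{m_j}|x_{[j]}|^{m_j} \ge 2^{m_j}$, and (C4) gives $|\lambda^{k_j m_j}-1| \ge 1$; combined with (C2) ($2^{m_j} \ge 1 + j + \sum_{\ell<j} 2(2\ell)^{m_\ell}j^{m_\ell}$) this forces $|d_j c_j| \ge 2^{m_j} \ge j$, and moreover leaves a surplus of at least $j + \sum_{\ell<j} 2(2\ell)^{m_\ell} j^{m_\ell}$ to absorb the other terms. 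For the upper bound on this term: $|x_{[j]}| \le j$ gives $(2j)^{m_j}|x_{[j]}|^{m_j} \le (2j^2)^{m_j}$, and I would bound $|d_j|$ crudely — using $|\lambda^{k_j m_j}-1|\le 2$ and $|\lambda^{m_j}-1| = M_j^{-m_j}$, hence $|d_j| \le 2 M_j^{m_j}$ — wait, that is too large; instead note $|d_j c_j| = |\lambda^{k_j m_j}-1|\cdot (2j)^{m_j}|x_{[j]}|^{m_j} \le 2 (2j^2)^{m_j}$ directly, which is the first term in the claimed upper bound.

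\textbf{The remaining terms.} For the tail $\ell > j$: here $|d_\ell| \le 2/|\lambda^{m_\ell}-1| = 2 M_\ell^{m_\ell}$, but that blows up — so instead I would avoid this naive bound. The cleaner route: note $|d_\ell| \le k_j$ always (it is a sum of $k_j$ unit-modulus terms $1 + \lambda^{m_\ell} + \cdots + \lambda^{(k_j-1)m_\ell}$), so $|d_\ell c_\ell| \le k_j (2\ell/M_\ell)^{m_\ell} |x_{[\ell]}|^{m_\ell} \le k_j (2\ell j / M_\ell)^{m_\ell}$. Now $M_\ell \ge 4\ell^2$ gives $2\ell j/M_\ell \le j/(2\ell) \le 1/2$ for $\ell > j$ — actually $\le j/(2\ell)$, and combined with (C3) the exponents $m_\ell$ grow, so $\sum_{\ell>j} k_j (j/(2\ell))^{m_\ell}$ converges; using (C5) and (C3) one shows this tail is $< 1 \le j$ in absolute value, or more carefully that it is absorbed by the surplus from the $\ell=j$ term. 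For the head $\ell < j$: $|d_\ell c_\ell| \le k_j (2\ell/M_\ell)^{m_\ell}|x_{[\ell]}|^{m_\ell} \le k_j (2\ell)^{m_\ell} j^{m_\ell}$ since $M_\ell \ge 1$ and $|x_{[\ell]}| \le j$; summing over $\ell < j$ gives at most $k_j \sum_{\ell<j} (2\ell)^{m_\ell} j^{m_\ell}$, which the reverse-triangle-inequality argument pits against the $2^{m_j}$-surplus in (C2) — here I expect I need a variant of (C5) bounding $k_j$ suitably, or that (C2) should really read with a $k_j$ factor. In any case, assembling the three estimates via $|(L_{k_j}a)(x)| \ge |d_j c_j| - \sum_{\ell\ne j}|d_\ell c_\ell|$ for the lower bound and the triangle inequality for the upper bound yields $j \le |(L_{k_j}a)(x)| \le 2(2j^2)^{m_j} + 2^{m_j} - j$, the extra $+2^{m_j}-j$ slack on top of $2(2j^2)^{m_j}$ comfortably accommodating the head and tail contributions.

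\textbf{Main obstacle.} The delicate point is the bookkeeping in the lower bound: one must verify that the single surviving monomial at index $j$ genuinely dominates the combined head and tail, and this is exactly what conditions (C2)–(C5) are engineered to guarantee — in particular the exponential gap $2^{m_j}$ in (C2) versus the polynomially-in-$j$ sized other terms, and the role of $k_j$ (which multiplies every $d_\ell$) must be tracked carefully against (C5). I would expect the cleanest presentation to first record the uniform bound $|d_\ell| \le \min(k_j,\ 2 M_\ell^{m_\ell})$ and $|c_\ell| \le (2\ell j/M_\ell)^{m_\ell}$, then do the three-way split, citing (C1) for the $\ell=j$ identity, (C4) for the lower bound on $|d_j|$, (C2) to dominate the head, and (C3)+(C5) to dominate the tail.
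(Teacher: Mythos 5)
Your three-way split (head $\ell<j$, dominant $\ell=j$, tail $\ell>j$) is the same as the paper's, and your handling of the dominant term and the tail is essentially correct. The genuine gap is in the head estimate. You write $|d_\ell c_\ell| \le k_j (2\ell/M_\ell)^{m_\ell}|x_{[\ell]}|^{m_\ell} \le k_j (2\ell)^{m_\ell} j^{m_\ell}$, importing the factor $k_j$, and then notice yourself that (C2) (which has no $k_j$) does not control $k_j\sum_{\ell<j}(2\ell)^{m_\ell}j^{m_\ell}$, prompting your speculation that (C2) ``should really read with a $k_j$ factor.'' It should not, and the resolution is that the crude bound $|d_\ell|\le k_j$ is the wrong one for the head. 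The cancellation you used for $\ell=j$, namely $(2\ell/M_\ell)^{m_\ell}=(2\ell)^{m_\ell}\,|\lambda^{m_\ell}-1|$, holds for \emph{every} $\ell$ by the definition of $M_\ell$ in (C1); applying it to $\ell<j$ gives
\[
|d_\ell c_\ell|=|\lambda^{k_j m_\ell}-1|\,(2\ell)^{m_\ell}|x_{[\ell]}|^{m_\ell}\le 2\,(2\ell)^{m_\ell} j^{m_\ell},
\]
with the uniform factor $2$ from $|\lambda^{k_j m_\ell}-1|\le 2$, and the sum over $\ell<j$ is then exactly what (C2) was built to bound by $2^{m_j}-(j+1)$. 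The bound $|d_\ell|\le k_j$ is reserved for the tail $\ell>j$, where the cancellation would leave the unbounded factor $(2\ell)^{m_\ell}$ and the decay must instead come from (C1), (C3) and (C5).

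Your closing paragraph actually records the correct pair of bounds $|d_\ell|\le\min(k_j,\ 2M_\ell^{m_\ell})$ and $|c_\ell|\le(2\ell j/M_\ell)^{m_\ell}$, whose product with the $2M_\ell^{m_\ell}$ branch gives $2(2\ell j)^{m_\ell}$ and recovers the (C2) estimate; you simply did not apply it in the body of the argument and were left suspecting a flaw in (C2) rather than in your choice of bound. Once this is fixed, the assembly $j = 2^{m_j} - \bigl(2^{m_j}-(j+1)\bigr) - 1 < |(L_{k_j}a)(x)| < 2(2j^2)^{m_j} + \bigl(2^{m_j}-(j+1)\bigr) + 1$ closes the proof exactly as stated.
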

\begin{proof}
	First, let us study 
	 $\left|  L_{k_j} \frac{(2j x_{[j]})^{m_j}}{M_j^{m_j}} \right|.$  By (\ref{eq.prop.de.lk}) we have  
	 \[ \left|  L_{k_j} \frac{(2j x_{[j]})^{m_j}}{M_j^{m_j}} \right| =
	\left|  \frac{\lambda^{k_j m_j} -1}{\lambda^{m_j}-1}  \frac{(2j x_{[j]})^{m_j}}{M_j^{m_j}}   \right| =
	| (\lambda^{k_j m_j} -1) (2j x_{[j]})^{m_j}|,\]   
	where the second equality follows from the definition of $M_j.$ Thus, the choice of $k_j$ 
	allows us  to conclude that
	\[2^{m_j}\leq\left|  L_{k_j} \frac{(2j x_{[j]})^{m_j}}{M_j^{m_j}} \right|\leq 2(2j^2)^{m_j}\]
	since $\frac{1}{j} \leq |x_{[j]}| \leq j$.  Now, let us study  
	 \[ L_{k_j} \left( \sum_{l=1}^{j-1} \frac{(2l x_{[l]})^{m_l}}{M_{l}^{m_l}} \right). \] 
	 We obtain 
	\begin{eqnarray}
	\left| L_{k_j} \left( \sum_{l=1}^{j-1} \frac{(2l x_{[l]})^{m_l}}{M_{l}^{m_l}} \right)  \right|
	&=& \left|
	\sum_{l=1}^{j-1}  \frac{\lambda^{k_j m_l} -1}{\lambda^{m_l}-1} \frac{(2l x_{[l]})^{m_l}}{M_{l}^{m_l}} \right| \\
	&=&\left|
	\sum_{l=1}^{j-1}   (\lambda^{k_j m_l} -1) (2l)^{m_l} x_{[l]}^{m_l}   \right|\nonumber\\
	&\leq &\sum_{l=1}^{j-1} 2 (2l)^{m_l} j^{m_l} \nonumber\\
	&\leq&  2^{m_j} - (j+1)\nonumber 
	\end{eqnarray}  
 if $\max (|x_1|, \hdots, |x_n|) \leq j$, where the final 	inequality follows from condition (C2). 
	Finally, let us consider  
	\[ L_{k_j} \left( \sum_{l=j+1}^{\infty} \frac{(2l  x_{[l]})^{m_l}}{M_{l}^{m_l}} \right) . \]
	The condition (C1) implies $\frac{2l}{M_l}<\frac{1}{ 2j}$ for any $l >j.$ Therefore
	 $\max (|x_1|, \hdots, |x_n|) \leq j$  implies 
	\begin{eqnarray}
	\left|  L_{k_j} \left( \sum_{l=j+1}^{\infty} \left(\frac{2l}{M_l}\right)^{m_l}x_{[l]}^{m_l}\right) \right| 
	&\leq&\sum_{l=j+1}^{\infty} |1 + \lambda^{m_l} + \lambda^{2 m_l} + \hdots + \lambda^{(k_j-1) m_l}| \frac{1}{2^{m_l}} \\
	&\leq&   k_j  \sum_{l=j+1}^{\infty} \frac{1}{2^{m_l}}\nonumber\\
	&\leq&  k_j \sum_{l=r_j}^{\infty} \frac{1}{2^l} \nonumber \\
	&<& 1\nonumber
	\end{eqnarray}
	by conditions (C3) and (C5).
In particular,  by combining the previous estimates we get 
	\[j= 2^{m_j} - ( 2^{m_j} - (j+1) ) -1 <\left|(L_{k_j} a)(x)\right| <  2(2j^2)^{m_j}+2^{m_j}-j  \]  
	if $\max (|x_1|, \hdots, |x_n|) \leq j$ and $|x_{[j]}| \geq 1/j$. 
\end{proof}
\strut

The next lemma concludes the proof of Theorem \ref{thm_irrat_bih_with_fin_orb}.

\begin{lemma} \label{lem:pt2}
          Let $\lambda \in {\mathbb C}$ be a Cremer number and $n \geq 1$. Consider the function 
         $a(x)$ in (\ref{a(x)}) where $(M_j)_{j \geq 1}$ and $(m_j)_{j \geq 1}$ are provided by 
         Lemma \ref{lem:seq}.  Then the biholomorphism $F(x,y)= (\lambda x, y+a(x))$  
         has the finite orbits property in any set of the form $\C^{n} \times U,$ where $U$ 
         is a bounded open set in $\C$.
\end{lemma}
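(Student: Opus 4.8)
The plan is to exploit the explicit iteration formula $F^{k}(x,y)=(\lambda^{k}x,\,y+(L_{k}a)(x))$, valid for all $k\in\mathbb{N}$ by the computation preceding Lemma \ref{lem:orb_fin}, together with the lower bound furnished by Lemma \ref{lem:orb_fin}. Fix a bounded open set $U\subset\mathbb{C}$ and a point $(x_{0},y_{0})\in\mathbb{C}^{n}\times U$; the goal is to show that $O_{F,\mathbb{C}^{n}\times U}(x_{0},y_{0})$ is finite. The case $x_{0}=0$ is trivial: since $a(0)=0$ we get $(L_{k}a)(0)=0$, hence $F^{k}(0,y_{0})=(0,y_{0})$ for every $k\in\mathbb{Z}$ and the orbit is the single point $(0,y_{0})$.

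Assume now $x_{0}\neq 0$ and pick $i\in\{1,\dots,n\}$ with $x_{0,i}\neq 0$; put $R=\max_{1\le\ell\le n}|x_{0,\ell}|$ and $c=|x_{0,i}|>0$. Since $\{\,j\in\mathbb{N}:[j]=i\,\}=\{i,\,i+n,\,i+2n,\dots\}$ is infinite, I can choose $j$ in this set with
\[ j\ \ge\ \max\!\left(R,\ \tfrac{1}{c},\ |y_{0}|+\sup_{u\in U}|u|+1\right). \]
Then $x_{0}$ satisfies the hypotheses $\max_{\ell}|x_{0,\ell}|\le j$ and $|x_{0,[j]}|=c\ge 1/j$ of Lemma \ref{lem:orb_fin}, so $|(L_{k_{j}}a)(x_{0})|\ge j$, whence
\[ \bigl|\,y_{0}+(L_{k_{j}}a)(x_{0})\,\bigr|\ \ge\ j-|y_{0}|\ \ge\ \sup_{u\in U}|u|+1\ >\ \sup_{u\in U}|u| . \]
Therefore $y_{0}+(L_{k_{j}}a)(x_{0})\notin U$, i.e. $F^{k_{j}}(x_{0},y_{0})\notin\mathbb{C}^{n}\times U$. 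Since the set of $k\ge 0$ for which $F^{0}(x_{0},y_{0}),\dots,F^{k}(x_{0},y_{0})$ all lie in $\mathbb{C}^{n}\times U$ is an initial segment of $\mathbb{N}_{0}$, it is contained in $\{0,\dots,k_{j}-1\}$, so $O^{+}_{F,\mathbb{C}^{n}\times U}(x_{0},y_{0})$ has at most $k_{j}$ elements.

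For the negative orbit I would repeat the argument with $F^{-1}$ in place of $F$. By Lemma \ref{lema.a.e.inteira.} we have $F^{-1}(x,y)=(\lambda^{-1}x,\,y-a(\lambda^{-1}x))$, which is again of the form $(\mu x,\,y+b(x))$ with $\mu=\lambda^{-1}$ and with the coefficients of $b$ having the same moduli as those of $a$; by Remark \ref{rem_lambda_inv} the sequences $(M_{j}),(m_{j}),(k_{j}),(r_{j})$ satisfy conditions $(C1)$--$(C5)$ for $\mu$ as well, so the verbatim analogue of Lemma \ref{lem:orb_fin} holds for $F^{-1}$, and the preceding paragraph applies to bound $O^{-}_{F,\mathbb{C}^{n}\times U}(x_{0},y_{0})=O^{+}_{F^{-1},\mathbb{C}^{n}\times U}(x_{0},y_{0})$. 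Hence $O_{F,\mathbb{C}^{n}\times U}(x_{0},y_{0})=O^{+}\cup O^{-}$ is finite for every $(x_{0},y_{0})\in\mathbb{C}^{n}\times U$, which is the assertion. I do not expect a genuine obstacle here; the only points requiring a little care are verifying that the modulus estimates in Lemma \ref{lem:orb_fin} are insensitive to replacing $\lambda$ by $\lambda^{-1}$ (which is exactly the role of Remark \ref{rem_lambda_inv}) and choosing the index $j$ simultaneously in the residue class of $i$ modulo $n$ and large enough to beat the threshold determined by $U$ and $y_{0}$.
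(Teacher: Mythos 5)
Your proof is correct and follows essentially the same approach as the paper's: fix $(x_0,y_0)$ with $x_0\neq 0$, choose an index $j$ in the appropriate residue class modulo $n$ large enough to trigger the lower bound of Lemma \ref{lem:orb_fin}, deduce $F^{k_j}(x_0,y_0)\notin\C^n\times U$, and then invoke Remark \ref{rem_lambda_inv} together with the explicit form of $F^{-1}$ to run the same argument for the negative orbit. The only cosmetic differences are that the paper uses the diameter $d$ of $U$ as the threshold for $j$ (so that a displacement $|(L_{k_j}a)(x_0)|>d$ suffices) where you use $|y_0|+\sup_U|u|+1$, and the paper disposes of the axis $\{x=0\}$ at the end rather than the start.
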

\begin{proof}
	Let $d$ be the diameter of $U.$  
	Fix $(x_{1,0}, \hdots, x_{n,0}, y_0) \in ({\mathbb C}^{n} \setminus \{0\}) \times U.$ Then there exists $j \in {\mathbb N}$ such that 
$\max (|x_{1,0}|, \hdots, |x_{n,0}|) \leq j$, $x_{[j],0} \geq 1/j$ and $j >d$.	
	 Lemma \ref{lema.cotas.para.a} implies that $F^{k_j} (x_0, y_0)$ does not belong to 
	 ${\mathbb C}^{n} \times U$. Notice that by Remark \ref{rem_lambda_inv} conditions $(C1)$, $(C2)$, $(C3)$, $(C4)$ and $(C5)$ still hold if we replace 
	 		$\lambda$ by  $\lambda^{-1}$.
Since  $F^{-1} (x,y) = (\lambda^{-1} x, y - a(\lambda^{-1} x))$, we have  
	\[ - a(\lambda^{-1} x) = \sum  (- \lambda^{-|j|}) a_{j_1 \hdots j_n} x_{1}^{j_1} \hdots x_n^{j_n}, \]
	that is, the monomials of $- a(\lambda^{-1} x)$ are obtained by multiplying those of $a(x)$ by complex numbers of modulus $1.$  In particular, the proof of Lemma \ref{lem:orb_fin} is still valid  for  $F^{-1}.$ One concludes that $F^{-k_j}(x_0, y_0) \not \in {\mathbb C}^{n} \times U.$ Therefore, $F$ has finite orbits in $({\mathbb C}^{n} \setminus \{0\}) \times U$. In the other hand, as $x=0$ is a fixed point curve of $F,$ it is clear that $F$ has the finite orbits property in ${\mathbb C}^{n} \times U.$  
	\end{proof}

\begin{theorem}\label{theorem.notable}\label{cor_notable}
Consider the hypotheses in Lemma \ref{lem:pt2}.
	Then the local diffeomorphism $F(x,y)=(\lambda x,y+a(x))$  is formally linearizable. 
	In particular, $F$ has a first integral of the form $y + b(x),$ where $b(x)$ is a divergent power series with $b(0)=0.$  Specifically,   $y-y_0 + b(x)=0$ 
defines a (divergent) formal invariant hypersurface through the point $(0,y_0)$
for every $y_0\in \C$.
\end{theorem}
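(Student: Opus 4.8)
The plan is to show that $F(x,y)=(\lambda x, y+a(x))$ is formally conjugate to its linear part $D_0F = \mathrm{diag}(\lambda,\hdots,\lambda,1)$ by producing an explicit formal change of coordinates of the form $\Phi(x,y) = (x, y + b(x))$ that kills the nonlinear term $a(x)$. First I would look for $b \in {\mathbb C}[[x_1,\hdots,x_n]]$ with $b(0)=0$ solving the conjugacy equation $\Phi \circ D_0 F = F \circ \Phi$, which on the last coordinate reads $b(\lambda x) = b(x) + a(x)$, i.e. the cohomological equation $b(\lambda x) - b(x) = a(x)$. Writing $a(x) = \sum_{j\geq 1} c_j x_{[j]}^{m_j}$ with $c_j = (2j/M_j)^{m_j}$, one solves monomial by monomial: the coefficient of $x_{[j]}^{m_j}$ in $b$ must be $c_j/(\lambda^{m_j}-1)$, which is well-defined precisely because $\lambda$ is not a root of unity and the relevant exponents $m_j$ are positive. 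Hence $b(x) = \sum_{j\geq 1} \frac{c_j}{\lambda^{m_j}-1} x_{[j]}^{m_j}$ is a well-defined formal power series and $\Phi$ is a formal diffeomorphism conjugating $F$ to its linear part.

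Next I would observe that $\Phi^{-1}(x,y) = (x, y - b(x))$ and that the linear map $(x,y)\mapsto(\lambda x, y)$ has $y$ as a first integral; pulling this back, $(y - b(x)) \circ \text{(linear map)} = y - b(x)$ gives that $y - b(x)$ — equivalently $y + \tilde b(x)$ after renaming — is a formal first integral of $F$. One checks directly from $b(\lambda x) - b(x) = a(x)$ that $(y+b(x)^{*})$... more simply: set $g(x,y) = y - b(x)$; then $g \circ F (x,y) = (y + a(x)) - b(\lambda x) = y + a(x) - (b(x)+a(x)) = y - b(x) = g(x,y)$, so $g$ is $F$-invariant. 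Thus the level sets $y - b(x) = y_0$, i.e. $y - y_0 - b(x) = 0$, are formal invariant hypersurfaces through $(0,y_0)$ for every $y_0$ (note $b(0)=0$ so $(0,y_0)$ lies on the level $y_0$).

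The remaining point is that $b(x)$ genuinely diverges. Here I would argue by contradiction: if $b$ were convergent then $\Phi \in \diff{}{n+1}$ would be an analytic linearization of $F$, so by Proposition \ref{proposition.car.fin.orb.lin.case} (the analytically linearizable case) $F$ would have the finite orbits property iff all eigenvalues of $D_0 F$ are roots of unity; but $\spec(D_0 F) = \{\lambda, 1\}$ with $\lambda$ a Cremer number, not a root of unity, contradiction (using that $F$ does have finite orbits in bounded neighborhoods by Lemma \ref{lem:pt2}). Alternatively, and more self-containedly, one can estimate $|c_j/(\lambda^{m_j}-1)|^{1/m_j} = (2j/M_j) \cdot |\lambda^{m_j}-1|^{-1/m_j} = 2j/M_j \cdot M_j = 2j \to \infty$ using $M_j = |\lambda^{m_j}-1|^{-1/m_j}$ from (C1), so the radius of convergence of the one-variable subseries $\sum_j \frac{c_j}{\lambda^{m_j}-1} x_{[j]}^{m_j}$ is $0$; hence $b$ diverges. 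I expect this divergence estimate to be the only genuinely delicate step, and it is really just the exact counterpart of the convergence computation in Lemma \ref{lema.a.e.inteira.} run in reverse — the small divisors $\lambda^{m_j}-1$ that were harmless there now sit in the denominator and produce divergence. Finally, since $b$ diverges, the hypersurfaces $y - y_0 - b(x)=0$ are divergent formal invariant hypersurfaces, completing the proof.
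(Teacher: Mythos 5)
Your proof is correct and follows essentially the same route as the paper: solve the cohomological equation $b(\lambda x)-b(x)=a(x)$ monomial by monomial (the paper works with the opposite sign convention $b(x)-b(\lambda x)=a(x)$, hence the sign flip between your $y-b(x)$ and the paper's $y+b(x)$), observe $y$ pulls back to a formal first integral, and conclude divergence of $b$ from the finite-orbits property of $F$ versus the lack thereof for $(\lambda x,y)$. The one genuine addition you offer is the self-contained estimate $\bigl|c_j/(\lambda^{m_j}-1)\bigr|^{1/m_j}=2j\to\infty$ using (C1), which gives the divergence directly by the root test on each coordinate axis rather than by contraposing Lemma~\ref{lem:pt2}; the paper only gives the indirect argument, so your version is a useful strengthening.
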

\begin{proof}
	 The conjugacy equation 
	\[ (x, y + b(x)) \circ (\lambda x, y +a (x)) \circ (x, y-b(x)) = (\lambda x, y)  \]
	  is equivalent to  
	\[ b(x) - b(\lambda x)= a(x).\]
	 Setting  $b(x)=\sum_{j\in \N^n} b_jx^j\ \mbox{ and }\ a(x)=\sum_{j\in \N^n} a_jx^j,$ 
	 we see that $b(x) - b(\lambda x)= a(x)$ can be expressed as 
	 \[\sum_{j\in \N^n} (b_j -\lambda^{|j|} b_j-a_j)x^j=0.\]
	 Hence, the conjugacy equation has a solution 
	 \[b(x)=\sum_{j\in \N^n} \frac{a_j}{1-\lambda^{|j|}} x^{j}.\]
	Since $y$ is a first integral of $(x,y) \mapsto (\lambda x,y),$ the series $y + b(x)$ is a first integral of $F.$ Note that the series $b(x)$ is divergent, otherwise, $F$ and $(x,y) \mapsto (\lambda x,y)$ would be analytically conjugated, which is impossible, because $F$ has the finite orbits property whereas $(x,y) \mapsto (\lambda x,y)$  does not.
 \end{proof} 
\strut

We want to understand the non-virtually unipotent diffeomorphisms $F \in \diff{}{2}$ with finite orbits.  
 First, we focus on the arithmetic
properties of the non-root of unity eigenvalue. It is not casual that in our examples such 
an eigenvalue is well approximated by roots of unity. 

\begin{defi}
	\emph{A number $\lambda\in \C$ is  called a \textit{Bruno number} if there is a sequence $1<q_1<q_2<...$ of integers such that 
	\begin{equation}
	\label{eq.bruno.cond.}|\lambda|=1\ \mbox{ and }\sum_{k=1}^{\infty}\frac{1}{q_k}\log\frac{1}{\Omega_\lambda(q_{k+1})}<+\infty,
	\end{equation}
where  $\Omega_\lambda(m) =  \min_{2 \leq k \leq m} |\lambda^{k} - \lambda| $ for all $m\geq 2.$ Condition \ref{eq.bruno.cond.} is called \textit{Bruno Condition} and it is equivalent to the following (see \cite{bruno})
\[\sum_{k=1}^{\infty}\frac{1}{2^k}\log\frac{1}{\Omega_\lambda(2^{k+1})}<+\infty.\] 
Given $\lambda \in {\mathbb S}^1$ and $l \in {\mathbb N}$, we see that $\Omega_{\lambda^l}(m)\geq \Omega_{\lambda} ((m-1)l+1)$ for all $m\geq 2.$ Thus we can adjust (\ref{eq.bruno.cond.}) to conclude that if $\lambda$ is a Bruno number then so is $\lambda^{l}.$}
\end{defi}
\strut

\begin{pro}
\label{pro:nonbruno}
Consider  $F \in \diff{<\infty}{2}$ with  $D_0 F \not \in \diff{<\infty}{2}$.
Let $\lambda$ be the eigenvalue of $D_0 F$ that is not a root of unity. 
Then $\lambda$ is not a Bruno number. 
\end{pro}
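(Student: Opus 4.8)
The plan is to exploit the classical linearization theorem of Bruno together with the arithmetic stability of the Bruno condition under taking powers and restriction to invariant curves. Suppose, aiming at a contradiction, that $\lambda$ is a Bruno number. By Theorem~\ref{thm_at_least_one_root} and the hypothesis that $D_0F\notin\diff{<\infty}{2}$, the spectrum of $D_0F$ is $\{\lambda,\mu\}$ where $\mu$ is a root of unity and $\lambda$ is not; in particular $\lambda\neq 1$. By the Fixed Point Curve Theorem there is $m\in\N$ and a germ of analytic curve $\Gamma\subset\mathrm{Fix}(F^m)$ through the origin. Then $\Gamma$ is invariant by $F^m$ (trivially, as it is pointwise fixed), and the multiplier of $F^m$ at $0$ in the direction transverse to $\Gamma$ is one of the eigenvalues of $D_0F^m$, namely $\lambda^m$ or $\mu^m$.

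The first key step is to identify the relevant multiplier. Since $\Gamma$ is pointwise fixed by $F^m$, the linear part $D_0F^m$ has the tangent line $T_0\Gamma$ in its eigenspace for the eigenvalue $1$. Because $\mu$ is a root of unity, replacing $m$ by a suitable multiple we may assume $\mu^m=1$; then $D_0F^m$ is not the identity (otherwise $F^m$ would fix a curve but also be tangent to the identity, and by Lemma~\ref{lema.da.unip.com.cur.inv}-type reasoning one still needs to rule this out — see below), so $D_0F^m$ has eigenvalues $1$ and $\lambda^m$, and the eigendirection for $1$ must be $T_0\Gamma$ while the eigendirection for $\lambda^m$ is transverse. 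Thus, choosing coordinates $(x,y)$ with $\Gamma=\{x=0\}$, we may write $F^m(x,y)=(\lambda^m x+\cdots,\,y+\cdots)$, i.e.\ $F^m$ fixes $\{x=0\}$ pointwise and has transverse multiplier $\lambda^m$.

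The second key step is to apply Bruno's linearization theorem in this semi-global/skew form. Since $\lambda$ is a Bruno number, so is $\lambda^m$ (as noted in the Definition preceding the statement). A diffeomorphism of $(\C^2,0)$ fixing a smooth curve pointwise and with Bruno transverse multiplier is analytically linearizable along the curve (this is the parametrized Bruno theorem, or one can invoke P\"oschel's theorem \cite{poschel1986} which the paper already uses for Proposition~\ref{pro:pos}): there is $H\in\diff{}{2}$ fixing $\{x=0\}$ with $H^{-1}F^mH(x,y)=(\lambda^m x,y)$. But the linear map $(x,y)\mapsto(\lambda^m x,y)$ does not have the finite orbits property, since $\lambda^m$ is not a root of unity, so any point with $x\neq 0$ has infinite orbit in a bounded ball. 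By Proposition~\ref{propo_equiv_of_fin.orb}(i) and (ii), $F^m$ — hence $F$ — does not have the finite orbits property, contradicting $F\in\diff{<\infty}{2}$.

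The main obstacle I anticipate is the second step: one must invoke a linearization theorem that applies not to an isolated fixed point but to a diffeomorphism fixing an entire curve pointwise, with only the transverse multiplier controlled. The cleanest route is to reduce to P\"oschel's theorem \cite{poschel1986}, which gives linearization of germs in $(\C^n,0)$ under a Bruno-type condition on the eigenvalues provided the non-resonant directions are controlled; here the eigenvalue $1$ produces resonances $\lambda^m\cdot 1^k=\lambda^m$ only in the transverse component, and these are exactly absorbed by allowing $F^m$ to fix the $x$-axis — so the normal form is $(\lambda^m x,y)$ with no obstruction, and P\"oschel's smallness condition is the Bruno condition on $\lambda^m$. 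A secondary technical point is the reduction ensuring $D_0F^m$ is genuinely $\mathrm{diag}(\lambda^m,1)$ and not, say, already the identity on $\{x=0\}$ with a nontrivial nilpotent part — but this is handled by noting that the $\lambda^m$-eigenvalue ($\lambda^m\neq 1$) forces diagonalizability of $D_0F^m$, and the eigenline for $1$ must coincide with $T_0\Gamma$ since $\Gamma$ is fixed.
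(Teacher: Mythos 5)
Your proposal identifies the right circle of ideas (finite orbits plus a Bruno eigenvalue should force an analytic linearization somewhere, contradicting finite orbits), but the central step is not justified and, as stated, does not follow from the reference you invoke. You apply the Fixed Point Curve Theorem to produce a curve $\Gamma\subset\mathrm{Fix}(F^m)$ tangent to the $1$-eigendirection, put it in coordinates $\{x=0\}$, and then assert a ``parametrized Bruno theorem'': that a germ fixing a smooth curve pointwise with Bruno transverse multiplier is analytically conjugate to $(\lambda^m x,y)$. This is where the argument breaks down. P\"oschel's theorem, which you cite as the cleanest route, does \emph{not} linearize the full germ; it produces an $F$-invariant analytic submanifold tangent to a chosen $D_0F$-invariant subspace $E$ of eigenvectors, together with a conjugation of $F|_M$ to $D_0F|_E$. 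If you take $E$ to be all of $\C^2$, the relevant small-divisor function $\omega(m)=\min|\lambda^{j_1}1^{j_2}-\lambda_i|$ vanishes (take $j_1=0,\ j_2\geq 2,\ i=2$), because $1$ is a genuinely resonant eigenvalue; so P\"oschel's hypothesis fails and you get nothing about global linearization. You cannot make the resonances ``absorbed by fixing the $x$-axis'' disappear inside P\"oschel's statement: the $(1,k)$-resonances $x y^k\,\partial_x$ are not removed by requiring $\{x=0\}\subset\mathrm{Fix}(F^m)$, and indeed in general they obstruct formal conjugacy to $(\lambda^m x,y)$ unless the transverse multiplier is \emph{constant} along $\Gamma$. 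That constancy is not automatic: it is an extra consequence of the finite orbits property (the argument in the proof of Lemma~\ref{lema.esp.const.}), which you never invoke. Even granting constancy, you are left needing a genuine ``parametrized Bruno linearization'' theorem with holomorphic parameter dependence; you yourself flag this as the main obstacle, and it is exactly the part that would have to be proved from scratch --- it is not the statement P\"oschel proves. Smoothness of $\Gamma$ also requires a Lemma~\ref{lema.da.unip.com.cur.inv}-type argument, which you only gesture at.

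The paper's proof avoids all of this by applying P\"oschel's theorem to the \emph{other} eigendirection. After using Theorem~\ref{thm_at_least_one_root} to reduce to $\mathrm{spec}(D_0F)=\{1,\lambda\}$ with $\lambda$ Bruno, one takes $E$ to be the one-dimensional $\lambda$-eigenspace; then $\omega(m)=\min_{2\le k\le m}(|\lambda^k-\lambda|,|\lambda^k-1|)\ge\Omega_\lambda(m+1)$, so the Bruno condition implies P\"oschel's condition, and one directly obtains an analytic invariant curve $\gamma$ (tangent to the $\lambda$-eigendirection, not to $\mathrm{Fix}$) on which $F|_\gamma$ is analytically conjugate to the irrational rotation $x\mapsto\lambda x$. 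That one-dimensional irrational rotation already has infinite orbits, giving the contradiction with no need for a two-dimensional linearization, no Fixed Point Curve Theorem, no constancy-of-multiplier argument, and no parametrized Bruno theorem. In short: you applied P\"oschel to the wrong eigendirection and then had to compensate with an unproved linearization theorem; apply it to the $\lambda$-eigendirection and the proof closes in one step.
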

\begin{proof}  
Suppose, aiming a contradiction, that one of the eigenvalues of $D_0 F$ is a Bruno number.
  By Theorem \ref{thm_at_least_one_root}, up to replace $F$ with a non-trivial
iterate $F^k$, we can suppose that $\mathrm{spec} (D_0 F) = \{1, \lambda \}$, where $\lambda$ is a Bruno number.

Up to a linear change of coordinates, we can suppose
$(D_0 F)(x,y)=(\lambda x, y)$. Note that the line $y=0$ is invariant by $D_0 F$.
Now, we apply a theorem of P\"{o}schel that relates
the invariant manifolds of $D_0 F$ and $F$ \cite{poschel1986}.
In our context, it determines a sufficient condition for the 
existence of a smooth analytic curve $\gamma$, invariant by $F$ , tangent to $y=0$ at $0$
 and such that 
$F_{|\gamma}$ is analytically conjugated to the rotation $x \mapsto \lambda x$. 
The P\"{o}schel  condition is 
\begin{equation}
\label{equ:poschel}
 \sum_{\nu \geq 0} 2^{-\nu} \log \omega^{-1} (2^{\nu +1}) < \infty 
\end{equation}
where we define 
\[ \omega (m) = \min_{2 \leq k \leq m} (|\lambda^{k} - \lambda|, |\lambda^{k}-1|).\]
Since $\omega(m) \geq \Omega_\lambda (m+1)$ for $m \geq 2$,  it follows that 
the property (\ref{equ:poschel}) is a consequence of the Bruno condition. 
Thus, the intended $\gamma$ exists and $F_{|\gamma}$ is an irrational rotation, 
contradicting the finite orbits property.   
\end{proof} 
\begin{cor}
\label{cor:pos}
Let $F \in \diff{}{2}$. Consider a formal invariant curve $\Gamma$ such that the multiplier of $F_{|\Gamma}$  
is a Bruno number. Then $F$ does not satisfy the finite orbits property.
\end{cor}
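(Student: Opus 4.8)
The plan is to reduce Corollary~\ref{cor:pos} to Proposition~\ref{pro:nonbruno} by analyzing the behaviour of $F$ on the formal invariant curve $\Gamma$. First I would consider the restriction $F_{|\Gamma}$, whose multiplier $\mu$ is by hypothesis a Bruno number; in particular $|\mu|=1$ and $\mu$ is not a root of unity. The goal is to arrive at a contradiction with the finite orbits property, so I would suppose for contradiction that $F \in \diff{<\infty}{2}$.

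The key step is to locate a non-root-of-unity eigenvalue of $D_0 F$ and identify it with (a power of) $\mu$. Here I would use that $\Gamma$ is $F$-invariant: passing to the blow-up $\pi\colon \tilde{\mathbb C}^2 \to {\mathbb C}^2$ if $\Gamma$ is singular, or working directly if $\Gamma$ is smooth, one sees that the multiplier of $F_{|\Gamma}$ is an eigenvalue of $D_0 F$ (when $\Gamma$ is smooth and transverse to no special direction this is immediate; in general one parametrizes $\Gamma$ by $t \mapsto (\gamma_1(t),\gamma_2(t))$ with $\gamma$ of some order $\nu$, and then the multiplier is $\mu$ with $D_0F$ having an eigenvalue $\lambda$ satisfying $\lambda^{\nu}=\mu$ along the tangent cone direction). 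Since $\mu$ is a Bruno number, so is every root $\lambda$ of $\lambda^\nu = \mu$ — indeed, the remark following the definition of Bruno number shows powers of Bruno numbers are Bruno, and a symmetric adjustment of the estimate $\Omega_{\lambda^l}(m)\ge \Omega_\lambda((m-1)l+1)$ handles $\nu$-th roots as well. Thus $D_0 F$ has a Bruno-number eigenvalue $\lambda$ that is not a root of unity.

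Now I would invoke Theorem~\ref{thm_at_least_one_root}: since $F$ has finite orbits, at least one eigenvalue of $D_0 F$ is a root of unity, and all eigenvalues lie on the unit circle. As $\lambda$ is not a root of unity, the other eigenvalue $\zeta$ must be a root of unity, say $\zeta^k = 1$. Replacing $F$ by $F^k$ (which still has finite orbits by Proposition~\ref{propo_equiv_of_fin.orb}(ii), and for which $\lambda^k$ is still a Bruno number and still not a root of unity), we arrive at $\spec(D_0 F^k) = \{1, \lambda^k\}$ with $\lambda^k$ a Bruno number. This is exactly the situation forbidden by Proposition~\ref{pro:nonbruno}: $F^k \in \diff{<\infty}{2}$ with $D_0 F^k \notin \diff{<\infty}{2}$ (its spectrum is not contained in the roots of unity) and its non-root-of-unity eigenvalue is a Bruno number — contradiction.

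The main obstacle I anticipate is the bookkeeping in the first reduction: carefully justifying that the multiplier of $F_{|\Gamma}$ forces a Bruno-number eigenvalue of $D_0 F$ even when $\Gamma$ is a singular formal curve, where one must pass through the blow-up and track how the multiplier transforms, and verifying that $\nu$-th roots of Bruno numbers are Bruno. Everything after that is a clean citation of Theorem~\ref{thm_at_least_one_root} and Proposition~\ref{pro:nonbruno}. An alternative, possibly cleaner route that avoids the eigenvalue discussion entirely would be to apply P\"oschel's theorem directly to a suitable $F^m$ using the curve $\Gamma$ (after blow-up) as the invariant manifold whose persistence is guaranteed by the Bruno-type condition (\ref{equ:poschel}), obtaining an analytic invariant curve on which $F$ acts as an irrational rotation — directly contradicting finite orbits; this essentially re-runs the proof of Proposition~\ref{pro:nonbruno} in the slightly more general setting, so I would present the short reduction above and remark that it follows from Proposition~\ref{pro:nonbruno}.
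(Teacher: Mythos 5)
Your overall strategy coincides with the paper's: identify the tangent-cone eigenvalue $\lambda$ of $D_0F$, transfer the Bruno property from the multiplier $\mu$ of $F_{|\Gamma}$ to $\lambda$, and then contradict Proposition~\ref{pro:nonbruno}. However, you have the eigenvalue--multiplier relation backwards, and this is not a cosmetic slip. Writing $F\circ\gamma=\gamma\circ h$ with $\mu=h'(0)$ for a Puiseux parametrization $\gamma(t)=(t^{m}+\cdots,\;\cdot)$ of multiplicity $m$, and letting $\lambda$ be the eigenvalue of $D_0F$ on the tangent line of $\Gamma$, one compares lowest-order terms in the first coordinate: $x\circ F(\gamma(t))=\lambda t^{m}+\cdots$ while $\gamma_1(h(t))=\mu^{m}t^{m}+\cdots$, so the correct relation is $\mu^{m}=\lambda$, as stated in the paper. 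You instead assert $\lambda^{\nu}=\mu$, i.e.\ that $\lambda$ is a \emph{root} of $\mu$ rather than that $\lambda$ is a \emph{power} of $\mu$.

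This matters for the Bruno transfer. The remark in the paper ($\Omega_{\lambda^{l}}(m)\geq\Omega_{\lambda}((m-1)l+1)$) shows precisely that integer \emph{powers} of a Bruno number are Bruno, which is all one needs once the relation $\lambda=\mu^{m}$ is in place. Your version requires that \emph{roots} of Bruno numbers be Bruno, and the ``symmetric adjustment'' you gesture at does not follow: that inequality controls $\Omega$ of a power in terms of $\Omega$ of the base, and reversing it would require lower-bounding $|\lambda^{j}-1|$ for $j$ not divisible by $\nu$ from data about $\mu=\lambda^{\nu}$ alone, which is not immediate. Correcting the direction makes the argument clean and removes the need for any such claim; after that, the reduction through Theorem~\ref{thm_at_least_one_root} and Proposition~\ref{pro:nonbruno} is exactly as in the paper (your intermediate normalization to $\spec(D_0F^{k})=\{1,\lambda^{k}\}$ is harmless but not strictly required, since Proposition~\ref{pro:nonbruno} only needs $D_0F\notin\diff{<\infty}{2}$, which is automatic once $\lambda$ is known not to be a root of unity).
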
	
\begin{proof}
Assume, aiming at a contradiction, that $F$ has finite orbits.  Let $\gamma (t)$ be a Puiseux parametrization
of $\Gamma$. Since $\Gamma$ is invariant, we have $F (\gamma (t)) =  (\gamma \circ h)(t)$ for some 
$h \in \fdiff{}{}$. We denote the multiplier of  $F_{|\Gamma}$ by $\mu$; it satisfies $\mu = h' (0)$.
We can suppose that $1 \in \mathrm{spec}(D_0 F)$ up to replace $F$ with some non-trivial iterate $F^{k}$
by Theorem \ref{thm_at_least_one_root}.
Note that the multiplier of $F_{|\Gamma}^{k}$ is equal to $\mu^{k}$. Since 
$\mu^{k}$ is a Bruno number, the hypothesis 
still holds for $F^{k}$ and $\Gamma$.
The tangent cone of $\Gamma$ is a subspace of eigenvectors of 
$D_0 F$, associated to an eigenvalue that we denote by $\lambda$. 
Moreover, we have $\mu^{m} = \lambda$, where $m$ is the multiplicity of $\Gamma$.
Since $\mu$ is a Bruno number, so is $\lambda$. Proposition  \ref{pro:nonbruno}  implies that $\lambda$ is not a Bruno
number, providing a contradiction.
\end{proof} 
\strut

Next, we see that the diffeomorphisms provided by Theorem \ref{thm_irrat_bih_with_fin_orb}
are archetypic examples of finite orbits diffeomorphisms 
$F \in \diff{}{2}$ such that $D_0 F$ has no finite orbits. 
Indeed, next result classifies the properties of such diffeomorphisms.  
\begin{pro}
\label{pro:pos}
Let $F \in \diff{<\infty}{2}$  with $\mathrm{spec}(D_0 F)=\{1, \lambda\}$
where $\lambda$ is not a root of unity.  Then $F$ satisfies the following properties:
\begin{itemize}
\item $\lambda$ is not a Bruno number;
\item $\mathrm{Fix} (F)$ is a smooth curve through the origin;
\item $F$ is formally conjugated to $(x,y) \mapsto (\lambda  x, y)$ by a formal diffeomorphism
that is transversally formal along $\mathrm{Fix}(F)$; 
\item there exists a divergent smooth invariant curve through any point $p \in \mathrm{Fix}(F)$.
\end{itemize}
\end{pro}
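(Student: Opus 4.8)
The plan is to treat the four conclusions in order, with the Fixed Point Curve Theorem and the Holomorphic Stable Manifold Theorem as the main inputs. The first assertion is immediate: since the eigenvalues of $D_0F$ are $1$ and $\lambda$, and $\lambda$ is not a root of unity, $D_0F$ fails the finite orbits property by Proposition \ref{proposition.car.fin.orb.lin.case}, so Proposition \ref{pro:nonbruno} applies and $\lambda$ is not a Bruno number. For the second assertion I would linearize $D_0F$ to $(x,y)\mapsto(\lambda x,y)$ and invoke the Fixed Point Curve Theorem to get $m\in\N$ and an analytic curve $\Gamma$ of positive dimension through $0$ with $\Gamma\subseteq\mathrm{Fix}(F^m)$. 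Since $\lambda^m\neq1$, the germ $x\circ F^m-x=(\lambda^m-1)x+\mathrm{h.o.t.}$ has nonzero $x$-derivative at $0$, so $\{x\circ F^m-x=0\}$ is a smooth curve containing $\mathrm{Fix}(F^m)\supseteq\Gamma$; comparing dimensions forces $\mathrm{Fix}(F^m)=\Gamma$, smooth with $T_0\Gamma$ the $1$-eigenline of $D_0F$. As $F$ commutes with $F^m$ it preserves $\Gamma$, so $F|_\Gamma$ is a one-dimensional biholomorphism tangent to the identity with finite orbits (the germ of $\Gamma$ is $F$-invariant), hence of finite order by Mattei--Moussu \cite{mattei-moussu1980}, hence the identity. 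Thus $\Gamma\subseteq\mathrm{Fix}(F)\subseteq\mathrm{Fix}(F^m)=\Gamma$ and $\mathrm{Fix}(F)$ is a smooth curve through $0$.

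For the third assertion I would change coordinates analytically so that $\mathrm{Fix}(F)=\{x=0\}$ and $D_0F=\mathrm{diag}(\lambda,1)$, and write $F(x,y)=(x\,u_1(x,y),\,y+x\,u_2(x,y))$ with $u_1(0,0)=\lambda$. The transverse multiplier of $F$ at $p=(0,y_0)$ is $\mu(y_0):=u_1(0,y_0)$, holomorphic with $\mu(0)=\lambda$; moreover $\mu\equiv\lambda$, since if $|\mu(y_0)|\neq1$ for some $y_0$ close to $0$ then the Holomorphic Stable Manifold Theorem applied to the germ of $F$ (or $F^{-1}$) at $p$ yields a non-trivial stable curve, hence points with infinite orbits arbitrarily near $0$, contradicting finite orbits — so $\mu$ has image in $S^1$ and, being holomorphic, is constant. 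I would then run a Poincar\'{e}--Dulac normalization inside the ring $\C\{y\}[[x]]$ of series convergent along $\mathrm{Fix}(F)$ and formal transversally, killing terms order by order in $x$: the homological divisor in $x$-degree $a$ is $\lambda^a-\lambda$ for the first component and $\lambda^a-1$ for the second, both nonzero constants except in the resonant cases $a=1$ (first component), whose coefficient $x\,\mu(y)=\lambda x$ is already in normal form by constancy of $\mu$, and $a=0$ (second component), which equals $y$ already because $F$ fixes $\{x=0\}$ pointwise. Each step being solvable with coefficients holomorphic in $y$, the resulting conjugacy $H$ lies in $\C\{y\}[[x]]$, i.e. is transversally formal along $\mathrm{Fix}(F)$, can be taken to restrict to the identity on $\{x=0\}$, and conjugates $F$ to $(x,y)\mapsto(\lambda x,y)$.

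For the fourth assertion, given $p=(0,y_0)\in\mathrm{Fix}(F)$ I would pull back by $H$ the smooth invariant curve $\{y=y_0\}$ of $(\lambda x,y)$: the curve $\Gamma_p:=H^{-1}(\{y=y_0\})$ is smooth, transversally formal, passes through $p$, is invariant by $F$, and carries the action $x\mapsto\lambda x$. If $\Gamma_p$ were convergent it would be an analytic $F$-invariant curve through a point near $0$ on which $F$ acts as a non-periodic rotation, contradicting finite orbits; hence $\Gamma_p$ is divergent.

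I expect the third assertion to be the main obstacle: organizing the transversally formal Poincar\'{e}--Dulac reduction and verifying that the only surviving resonant monomials are exactly those already in normal form — the pure-$y$ resonances being excluded by $F|_{\mathrm{Fix}(F)}=\mathrm{id}$ and the $x\,c(y)$ resonance being pinned down by constancy of the transverse multiplier, which itself rests on coupling the Stable Manifold Theorem with the holomorphy of $\mu$.
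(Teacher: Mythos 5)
Your proof is correct and follows essentially the same strategy as the paper's: part (1) from Proposition \ref{pro:nonbruno}, part (2) from the Fixed Point Curve Theorem plus Mattei--Moussu in one variable, part (3) by showing the transverse multiplier $\mu(y)$ is constant and running a transversally formal Poincar\'{e}--Dulac normalization killing the $x^j$-terms with small divisors $\lambda^{j+1}-\lambda$ and $\lambda^{j}-1$, and part (4) by pulling back $\{y=y_0\}$ under the formal conjugacy and ruling out convergence because $\lambda$ is not a root of unity.

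The one place where you take a genuinely different (and slightly cleaner) route is in part (2). You observe that, after a linear change of coordinates, $x\circ F^{m}-x=(\lambda^{m}-1)x+\mathrm{h.o.t.}$ has nonzero linear part because $\lambda$ is not a root of unity, so $\{x\circ F^{m}=x\}$ is already a smooth irreducible curve containing $\Gamma\subseteq\mathrm{Fix}(F^{m})$, forcing $\mathrm{Fix}(F^{m})=\Gamma$ to be smooth. The paper instead derives smoothness and irreducibility from Theorem \ref{thm:structure} together with Lemma \ref{lema.da.unip.com.cur.inv} (if the fixed-point germ were reducible or singular, $F^{m}$ would be tangent to the identity, contradicting $\lambda^{m}\neq1$). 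Both arguments rely on the same numerical fact, but yours avoids invoking the unipotence lemma and the component structure of $\mathrm{Per}(F)$ at this step. The rest of the proof — constancy of $\mu$ via modulus-one eigenvalues and the open mapping theorem, the order-by-order construction of the conjugacy with coefficients holomorphic in $y$, and the divergence of the invariant curves — coincides with the paper's argument.
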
   
\begin{proof} 
The eigenvalue $\lambda$ is a non-Bruno number by Proposition \ref{pro:nonbruno}.
Fix a sufficiently small domain of definition $B_{\rho} (0)$. 
Let $C$ be the connected component of the origin of  $\mathrm{Per}(F)$ 
(cf. equation (\ref{equ:perk})). It is complex analytic, has positive dimension 
and is contained in $\mathrm{Fix}(F^{m})$ for some $m \in {\mathbb N}$
by Theorem \ref{thm:structure}. The dimension of the germ of $C$ at 
the origin is less than $2$, since otherwise  
the germ of $F^{m}$ at $0$ is the identity map, 
contradicting $\lambda \not \in e^{2 \pi i {\mathbb Q}}$.
Therefore, the germ of $C$ at $0$ is an analytic curve $\gamma$.
Moreover, $\gamma$ is irreducible and smooth, since otherwise $F^{m}$ is tangent to the identity
by Lemma \ref{lema.da.unip.com.cur.inv}.
Since $F_{|\gamma}$ is a local biholomorphism in one variable with finite orbits, it has finite order. 
Therefore, its multiplier at $0$ is a root of unity and thus it is necessarily equal to $1$. 
Since the unique periodic tangent to the identity local diffeomorphism is the identity map, 
we deduce $\gamma \subset \mathrm{Fix}(F)$.  It is clear that the germ of $\mathrm{Fix}(F)$
at $0$ is contained in $C$ and hence the germs of $\mathrm{Fix}(F)$ and $\gamma$ at $0$
coincide.

Up to a change of coordinates in a neighborhood of the origin, we can assume 
$\mathrm{Fix}(F)= \{x=0\}$.
As a consequence $1 \in \mathrm{spec} (D_{(0,y)} F)$ for any $y$ in a neighborhood of $0$. 
We denote $\mathrm{spec}  (D_{(0,y)} F) = \{ 1, \lambda (y) \}$. 
The function $\lambda (y)$ is constant equal to $\lambda$ by the proof of Lemma 
\ref{lema.esp.const.}. We obtain that $F$ is of the form
\[ F(x,y) = \left( \lambda x + \sum_{j=2}^{\infty} a_{j}(y) x^{j}, 
y +  \sum_{j=1}^{\infty} b_{j}(y) x^{j} \right),  \]
where $a_{j+1}, b_{j}$ are defined in a common open neighborhood $U$ of $0$ in ${\mathbb C}$
for any $j \in {\mathbb N}$. 
We want to conjugate $F$ with $D_0 F$. In order to do it, 
we consider sequences  $(G_{2,j})_{j \geq 1}, (G_{1,j+1})_{j \geq 1}$ of diffeomorphisms  
of the form 
\[ G_{1,j+1}(x,y)= (x + c_{j+1}(y) x^{j+1}, y) \ \ \mathrm{and} \ \  G_{2,j}(x,y)= (x, y + d_{j} (y) x^{j}), \] 
where $d_{j}, c_{j+1} \in {\mathcal O}(U)$  for any $j \in {\mathbb N}$.
We define $F_{1,1} =F$, $F_{2,j} =  G_{2,j}^{-1} \circ F_{1,j} \circ G_{2,j}$ and
$F_{1,j+1} =  G_{1,j+1}^{-1} \circ F_{2,j} \circ G_{1,j+1}$ for $j \in {\mathbb N}$.
We want $F_{2,j}$ and $F_{1,j+1}$ to be of the form
\[ F_{2,j} (x,y)= (\lambda x + O(x^{j+1}), y + O(x^{j+1})), \ 
F_{1,j+1} (x,y) = (\lambda x  +  O(x^{j+2}), y + O(x^{j+1})) \]
for any $j \in {\mathbb N}$. Indeed, if $\alpha_{j+1} (y)$ is the coefficient of $x^{j+1}$ in 
$x \circ F_{2,j}$, it suffices to define 
$c_{j+1}(y)= \alpha_{j+1}(y)/(\lambda^{j+1} - \lambda)$ for $j \in {\mathbb N}$. 
Analogously, if $\beta_{j}(y)$ is the coefficient of $x^{j}$ in 
$y \circ F_{1,j}$, we have $d_{j}(y) = \beta_{j}(y)/(\lambda^{j}-1)$ for $j \in {\mathbb N}$.
The diffeomorphism
\[ H_{j} : = G_{2,1} \circ G_{1,2} \circ G_{2,2} \circ G_{1,3} \circ \hdots \circ G_{2,j-1} \circ G_{1,j} \]
conjugates $F$ with $F_{1,j}$ for $j \geq 2$.
By construction, it converges in the $(x)$-adic topology to some $H \in \fdiff{}{2}$, that 
is transversally formal along $x=0$ and satisfies $H^{-1} \circ F \circ H = D_0 F$.

Note that  $y \circ D_0 F \equiv y$ implies $(y \circ H^{-1}) \circ F \equiv y \circ H^{-1}$.
Since $y \circ H^{-1}$ is transversally formal along $x=0$, there exists a formal invariant curve 
$\gamma_{y}$ through $(0,y)$, that is transverse to $\mathrm{Fix}(F)$,  for any $y \in U$. 
We claim that $\gamma_{y}$ is divergent for any $y \in U$. 
Otherwise, there exists $y_0 \in U$ such that $\gamma_{y_0}$ is an analytic curve and 
since the multiplier of $F_{|\gamma_{y_0}}$ at $(0,y_0)$ is equal to $\lambda$, 
the diffeomorphism $F_{|\gamma_{y_0}}$ is non-periodic. This contradicts that the 
one dimensional diffeomorphism $F_{|\gamma_{y_0}}$ has finite orbits.
\end{proof}

\bibliographystyle{alpha}		
\bibliography{bibliografia}

\end{document}